\documentclass{article}
\usepackage[T1]{fontenc}
\usepackage[latin9]{inputenc}
\usepackage{amsthm}
\usepackage{amsmath}
\usepackage{graphicx}
\usepackage{amssymb}
\usepackage[all]{xy}
\usepackage[unicode=true, pdfusetitle,
 bookmarks=true,bookmarksnumbered=false,bookmarksopen=false,
 breaklinks=false,pdfborder={0 0 1},backref=false,colorlinks=false]
 {hyperref}

\providecommand{\tabularnewline}{\\}

\theoremstyle{plain}
  \theoremstyle{plain}
  \newtheorem*{thm*}{Theorem}
  \theoremstyle{plain}
  \newtheorem{thm}{Theorem}[section]
  \theoremstyle{remark}
  \newtheorem*{acknowledgement*}{Acknowledgement}
  \theoremstyle{definition}
  \newtheorem{defn}[thm]{Definition}
  \theoremstyle{remark}
  \newtheorem{rem}[thm]{Remark}
 \theoremstyle{definition}
  \newtheorem{example}[thm]{Example}
  \theoremstyle{plain}
  \newtheorem{prop}[thm]{Proposition}
  \theoremstyle{plain}
  \newtheorem{lem}[thm]{Lemma}
  \theoremstyle{plain}
  \newtheorem{cor}[thm]{Corollary}
  \theoremstyle{plain}
  \newtheorem{conjecture}[thm]{Conjecture}
  \theoremstyle{plain}
  \newtheorem{question}[thm]{Question}

\title{A bordered Chekanov-Eliashberg algebra}
\author{Steven Sivek}

\begin{document}

\maketitle
\begin{abstract}
Given a front projection of a Legendrian knot $K$ in $\mathbb{R}^{3}$
which has been cut into several pieces along vertical lines, we assign
a differential graded algebra to each piece and prove a van Kampen
theorem describing the Chekanov-Eliashberg invariant of $K$ as a
pushout of these algebras. We then use this theorem to construct maps
between the invariants of Legendrian knots related by certain tangle
replacements, and to describe the linearized contact homology of Legendrian
Whitehead doubles. Other consequences include a Mayer-Vietoris sequence
for linearized contact homology and a van Kampen theorem for the characteristic
algebra of a Legendrian knot.
\end{abstract}

\section{Introduction}

\subsection{The Chekanov-Eliashberg invariant}

Let $(\mathbb{R}^{3},\xi)$ denote the standard contact structure
$\xi=\ker(dz-ydx)$ on $\mathbb{R}^{3}$. A knot $K\subset\mathbb{R}^{3}$
is said to be Legendrian if $T_{x}K\subset\xi_{x}$ at every point
of $K$, and two knots $K_{0}$ and $K_{1}$ are Legendrian isotopic
if they are connected by a family $K_{t}$ of Legendrian knots.

Chekanov \cite{Chekanov:2002p539} defined for each Legendrian knot
$K\subset(\mathbb{R}^{3},\xi)$ an associative unital differential
graded algebra (DGA), here denoted $Ch(K)$, whose stable tame isomorphism
type is an invariant of $K$ up to Legendrian isotopy. Given a Lagrangian
projection of $K$, i.e. a projection of $K$ onto the $xy$-plane,
the algebra is generated freely over $\mathbb{F}=\mathbb{Z}/2\mathbb{Z}$
by the crossings of $K$, which correspond to Reeb chords in $(\mathbb{R}^{3},\xi)$,
and graded by $\mathbb{Z}/2r(K)\mathbb{Z}$, where $r(K)$ is the
rotation number of $K$. (Etnyre, Ng, and Sabloff \cite{Etnyre:2002p544}
later extended the base ring to $\mathbb{Z}[t,t^{-1}]$ and the grading
to a full $\mathbb{Z}$ grading.) The differential counts certain
immersed disks in the knot diagram, and although it was motivated
by contact homology \cite{Eliashberg:1998p718} its computation is
entirely combinatorial. Thus $Ch(K)$ could be used to distinguish
between two Legendrian representatives of the $5_{2}$ knot even though their
classical invariants $tb$ and $r$ are the same.

Legendrian knots are often specified by front projections, which are
projections onto the $xz$-plane. A knot can be uniquely recovered
from its front projection since the $y$-coordinate at any point is
the slope $\frac{dz}{dx}$; in particular the projection has no vertical
tangent lines, so at each critical point of $x$ there is a cusp.
At any crossing the segment with smaller slope passes over the one
with larger slope. Ng \cite{Ng:2003p540} gave a construction of $Ch(K)$
for front projections, and showed that given a so-called {}``simple''
front the DGA is very easy to describe.

Meanwhile, on the way to constructing bordered Heegaard Floer homology
\cite{Lipshitz:2008p649} as an invariant of 3-manifolds with marked
boundary, Lipshitz, Ozsv{\'a}th, and Thurston constructed a simplified
model of knot Floer homology for bordered grid diagrams \cite{Lipshitz:2008p652}.
By cutting a grid diagram along a vertical line, they associate differential
modules $CPA^{-}(\mathcal{H}^{A})$ and $CPD^{-}(\mathcal{H}^{D})$
over some algebra $\mathcal{A}$ to the two halves $\mathcal{H}^{A}$
and $\mathcal{H}^{D}$ of the diagram $\mathcal{H}$ so that their
tensor product is the {}``planar Floer homology'' $CP^{-}(\mathcal{H})$.
Since the differential on $CP^{-}$ counts certain rectangles in the
grid diagram, the algebra $\mathcal{A}$ is constructed to remember
when these rectangles cross the dividing line, and so the pairing
theorem \[
CPA^{-}(\mathcal{H}^{A})\otimes_{\mathcal{A}}CPD^{-}(\mathcal{H}^{D})\cong CP^{-}(\mathcal{H})\]
is a straightforward consequence of the construction. However, the
chain complex $CP^{-}(\mathcal{H})$ is not an invariant of the underlying
knot, and a similar decomposition for the knot Floer homology complex
$CFK^{-}$ seems to be significantly harder.

Our goal in this paper is to present a similar decomposition theorem
for the Chekanov-Eliashberg DGA associated to a front diagram. By
dividing a simple front into left and right halves $K^{A}$ and $K^{D}$
which intersect the dividing line in $n$ points we will construct
two DGAs, $A(K^{A})$ and $D(K^{D})$.  These DGAs admit morphisms into them from
another DGA denoted $I_{n}$, where a DGA morphism is an algebra homomorphism
which preserves gradings and satisfies $\partial\circ f = f\circ\partial$.
We then prove the following analogue of van Kampen's theorem:
\begin{thm*}
The commutative diagram \[
\xymatrix{I_{n}\ar[r]\ar[d] & D(K^{D})\ar[d]\\
A(K^{A})\ar[r] & Ch(K)}
\]
is a pushout square in the category of DGAs.
\end{thm*}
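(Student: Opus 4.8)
The plan is to verify the universal property of the pushout directly, after reducing it to a single statement about the \emph{locality} of the Chekanov--Eliashberg differential. Throughout, recall that $A(K^{A})$, $D(K^{D})$ and $I_{n}$ are free unital graded $\mathbb{F}$-algebras; write $S_{A}$, $S_{D}$, $S_{I}$ for their generating sets. \textbf{Step 1 (generators and gradings).} I would first unwind the definitions to see that the morphisms $I_{n}\to A(K^{A})$ and $I_{n}\to D(K^{D})$ identify $S_{I}$ with the crossings that the dip inserted along the dividing line contributes to $A(K^{A})$ and to $D(K^{D})$ respectively, so that $S_{A}\setminus S_{I}$ consists of the crossings and right cusps of the front of $K$ lying to the left of that line and $S_{D}\setminus S_{I}$ of those lying to the right. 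Since Ng's Maslov potential is determined locally along the strands of the front, its restriction to the two pieces produces exactly the gradings used to build $A(K^{A})$ and $D(K^{D})$; hence the evident bijection
\[
S_{A}\sqcup_{S_{I}}S_{D}\;\cong\;\{\text{crossings and right cusps of the front of }K\}
\]
is grading preserving. Taking $Ch(K)$ to be computed from this front with the dividing-line dip in place (an inessential modification), it follows that the algebra-level pushout $A(K^{A})\ast_{I_{n}}D(K^{D})$, the amalgamated free product, is canonically isomorphic to $Ch(K)$ as a graded algebra.

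\textbf{Step 2 (reduction).} A differential on the graded algebra $A(K^{A})\ast_{I_{n}}D(K^{D})$ is determined by its values on a generating set, so there is at most one extending both $\partial^{A}$ and $\partial^{D}$; thus identifying the DGA pushout with $Ch(K)$ amounts to proving
\[
\partial^{Ch}a=\partial^{A}a\ \ (a\in S_{A})\qquad\text{and}\qquad \partial^{Ch}a=\partial^{D}a\ \ (a\in S_{D}).
\]
Granting this, the universal property is formal: given a DGA $B$ with DGA morphisms $f_{A}\colon A(K^{A})\to B$ and $f_{D}\colon D(K^{D})\to B$ agreeing on $I_{n}$, freeness of $Ch(K)$ as an algebra yields a unique graded algebra map $g\colon Ch(K)\to B$ with $g|_{S_{A}}=f_{A}$ and $g|_{S_{D}}=f_{D}$, and for $a\in S_{A}$ one has $g\partial^{Ch}a=g\partial^{A}a=f_{A}\partial^{A}a=\partial^{B}f_{A}a=\partial^{B}ga$ (and symmetrically on $S_{D}$), so $g$ commutes with the differentials; uniqueness is immediate since any competitor is forced to agree with $g$ on generators.

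\textbf{Step 3 (locality of $\partial$ --- the main obstacle).} What remains, and what I expect to carry essentially all the geometric content, is to show that every admissible disk contributing to $\partial^{Ch}a$ for a generator $a$ lying weakly to the left of the dividing line lies entirely in the left piece --- hence is already counted by $\partial^{A}$ --- and conversely that every disk counted by $\partial^{A}$ appears in the full front of $K$; likewise on the right. This is the role of the dip: I would argue that its crossings form a barrier that no disk boundary can cross. In Ng's combinatorial model a disk is reconstructed by tracing its boundary along the front and recording the turns made at corners; by inspecting the finitely many local configurations at the dip one checks that a boundary arc entering the dip region from the left with the orientation forced by an initial corner at $a$ must turn back before reaching the dividing line, symmetrically from the right, and that the interior corners of such a disk all stay on the same side. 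The special case of a front consisting of the dip alone recovers the differential on $I_{n}$ --- which is exactly how $I_{n}$ was defined --- so the same analysis simultaneously confirms compatibility on $S_{I}$ and closes the argument.
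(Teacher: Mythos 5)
Your Step 2 is fine and is essentially the paper's own argument: since $D(K^{D})$ is the free product of $I_{n}$ with the free algebra on the vertices of $K^{D}$, the algebra-level pushout $A(K^{A})\ast_{I_{n}}D(K^{D})$ is free on the vertices of $K$, and once one knows that the induced differential agrees with $\partial^{Ch}$ on generators the universal property is formal. The problem is that Steps 1 and 3 are built on a misreading of the construction. No dip is inserted along the dividing line in this paper: the $\rho_{ij}$ are \emph{not} crossings of $K$, the map $I_{n}\to A(K^{A})$ is not an inclusion of generating sets (it is the half-disk count $w(\rho_{ij})=\sum_{H}\partial H$, which sends some $\rho_{ij}$ to $0$ and others to sums like $ab+1$), and there is no bijection $S_{A}\sqcup_{S_{I}}S_{D}\cong\{\text{vertices of }K\}$. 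The graded-algebra identification of the pushout with $Ch(K)$ still holds, but for the reason above, not the one you give.

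The fatal gap is Step 3. Your locality claim --- that every admissible disk for a generator on the right of the dividing line stays on the right --- is false, and if it were true the theorem would be vacuous (the amalgamation over $I_{n}$ would do nothing). An admissible disk for a vertex $v\in K^{D}$ must have its unique leftmost point at a \emph{left cusp}, which very often lies in $K^{A}$; in the paper's trefoil example the monomials $abc$, $a$, $c$ of $\partial x$ all come from disks crossing the line. The entire geometric content of the theorem is the statement that such a disk decomposes uniquely as a right half-disk $H\in\mathrm{Half}_{D}(K^{D};v)$ glued to a left half-disk $H'\in\mathrm{Half}_{A}(K^{A};i,j)$, and that conversely every such gluing is admissible, so that substituting $w(\rho_{ij})$ for $\rho_{ij}$ in $\partial^{D}v$ recovers $\partial^{Ch}v$; this is the paper's Lemma~\ref{lem:w'-chain-map} (that $w'$ is a chain map), together with Proposition~\ref{pro:w-chain-map} (that $w$ is a chain map, needed for the square to commute in the category of DGAs at all). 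Your proposal replaces this with an assertion about disks "turning back before reaching the dividing line" at a dip that does not exist in this construction, so the essential step is missing. (A dip-based proof can be made to work, but it proves a different statement unless you also set up the dipped diagram, prove stable tame isomorphism with $Ch(K)$, and match the dip crossings' differentials with $w$; none of that is sketched here.)
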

This theorem adds to the {}``algebraic topology'' picture of the
Chekanov-Eliashberg algebra which originated with Sabloff's Poincar\'{e} 
duality theorem \cite{Sabloff:2006p526} and also includes cup products,
Massey products, and $A_{\infty}$ product structures \cite{Civan:2009p1129};
these previous results all apply cohomological ideas to linearizations
of the DGA, whereas the van Kampen theorem suggests that the DGA should
be thought of as a {}``fundamental group'' of a Legendrian knot. 

After developing the van Kampen theorem and generalizing it to further
divisions of Legendrian fronts, we add to the cohomological picture
by constructing a related Mayer-Vietoris sequence in linearized contact
homology. We will then use these ideas to construct morphisms between
the DGAs of some Legendrian knots related by tangle replacements,
and in particular apply these techniques to understand augmentations
of Legendrian Whitehead doubles. Finally, we make some similar observations
about the closely related characteristic algebra.

\subsection{The algebra of a simple Legendrian front}

This section will review the construction of the Chekanov-Eliashberg
DGA for a Legendrian front as in \cite{Ng:2001p667,Ng:2003p540}.
Although it can be constructed for any front,
we will restrict our attention to simple fronts, where the DGA is particularly
easy to describe.
Throughout
this paper all DGAs will be assumed to be {\em semi-free}
\cite{Chekanov:2002p539}, i.e. freely generated over $\mathbb{F}$
by a specified set of generators.
\begin{defn}
\label{def:simple-front}A Legendrian front is \emph{simple} if it can be
changed by a planar isotopy so that all of its right cusps have the same
$x$-coordinate.\end{defn}
\begin{rem}
We will also describe a piece of a front cut out by two vertical lines
as simple if no right cusp lies in a compact region bounded by the
front and the vertical lines; this will ensure that these pieces form
a simple front when glued together.
\end{rem}
\begin{figure}
\begin{centering}
\includegraphics{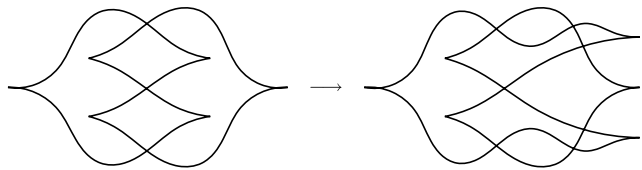}
\par\end{centering}

\caption{A front diagram of a Legendrian trefoil is made simple by pulling
the two interior right cusps rightward and using Legendrian Reidemeister
moves.\label{Flo:legendrian-trefoil}}

\end{figure}
Two fronts represent the same Legendrian knot if and only if they
are related by a sequence of Legendrian Reidemeister moves \cite{Swiatkowski:1992p1071}:

\begin{center}
\includegraphics{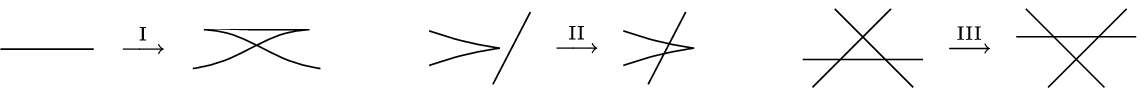}
\par\end{center}

\noindent Therefore every Legendrian knot admits a simple representative
by taking an arbitrary front and using type II Reidemeister moves
to pull each right cusp outside of any compact region, as in Figure
\ref{Flo:legendrian-trefoil}, though this will increase the number
of crossings.
\begin{defn}
The \emph{vertices} of a simple Legendrian front are its crossings
and right cusps.
\end{defn}
The simple front on the right side of Figure \ref{Flo:legendrian-trefoil}
has ten vertices: there are seven crossings and three right cusps.
\begin{defn}
An \emph{admissible disk} for a vertex $v$ of a simple front $K$
is a disk $D^{2}\subset\mathbb{R}^{2}$ with $\partial D\subset K$
satisfying the following properties:
\begin{enumerate}
\item $D$ is smoothly embedded except possibly at vertices and left cusps;
\item The vertex $v$ is the unique rightmost point of $D$;
\item $D$ has a unique leftmost point at a left cusp of $K$;
\item At any \emph{corner} of $D$, i.e. a crossing $c\not=v$ where $D$
is singular, a small neighborhood $U$ of $c$ is divided into four
regions by $U\cap K$; we require that $U\cap D$ be contained in
exactly one of these regions. 
\end{enumerate}
Let $\mathrm{Disk}(K;v)$ denote the set of admissible disks for the
vertex $v$.
\end{defn}
\begin{figure}
\begin{centering}
\includegraphics{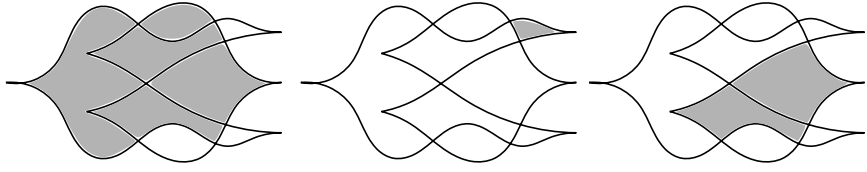}
\par\end{centering}

\caption{Disks embedded in the simple front diagram of Figure \ref{Flo:legendrian-trefoil}.
The first two are not admissible -- one occupies three quadrants around
the top middle crossing, and one does not have its leftmost point
at a left cusp -- but the last one is admissible.\label{fig:admissible-disks}}

\end{figure}

\begin{defn}
The Chekanov-Eliashberg algebra of a simple front $K$, denoted $Ch(K)$,
is the DGA generated freely over $\mathbb{F}=\mathbb{Z}/2\mathbb{Z}$
by the vertices of $K$. Its differential is given by \[
\partial c=\begin{cases}
\sum_{D\in\mathrm{Disk}(K;c)}\partial D, & c\mathrm{\ a\ crossing}\\
1+\sum_{D\in\mathrm{Disk}(K;c)}\partial D, & c\mathrm{\ a\ right\ cusp},\end{cases}\]
where $\partial D$ denotes the product of the corners of $D$ as
seen in counterclockwise order from $c$.

If $K$ has rotation number $r(K)$, we can assign a \emph{Maslov
potential} $\mu(s)\in\Gamma=\mathbb{Z}/2r(K)\mathbb{Z}$ to each strand
$s$ of $K$ so that at any left or right cusp, the top strand $s_{1}$
and bottom strand $s_{2}$ satisfy $\mu(s_{1})-\mu(s_{2})=1$. Then
$Ch(K)$ admits a $\Gamma$-grading in which each right cusp has grading
$|c|=1$, and at each crossing $c$ with top strand $s_{1}$ crossing
over the bottom strand $s_{2}$ we define the grading to be $|c|=\mu(s_{1})-\mu(s_{2})$.  (Recall that in a front projection, the strand with smaller slope
always crosses over the strand with larger slope.)
\end{defn}
\begin{rem}
The grading is well-defined in $\mathbb{Z}/2r(K)\mathbb{Z}$ for
knots but ambiguous for links, since we may change the Maslov potential on
every strand of a single component $K$ by some constant $c$ and thus change the
gradings at every vertex where exactly one strand belongs to $K$ by $\pm c$.
In practice we will always work with an explicit choice of grading.
\end{rem}
\begin{figure}
\begin{centering}
\includegraphics{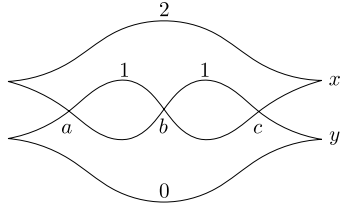}
\par\end{centering}

\caption{A simple front for another Legendrian trefoil, with vertices and Maslov
potentials labeled.\label{fig:simple-trefoil}}

\end{figure}

\begin{example}
If $K$ is the simple front of Figure \ref{fig:simple-trefoil}, then
$Ch(K)$ is generated freely by $a,b,c,x,y$ satisfying \begin{eqnarray*}
\partial x & = & 1+abc+a+c\\
\partial y & = & 1+cba+c+a\\
\partial a=\partial b=\partial c & = & 0.\end{eqnarray*}
The Maslov potentials indicated in Figure \ref{fig:simple-trefoil}
give $Ch(K)$ a $\mathbb{Z}$-grading with $|x|=|y|=1$ and $|a|=|b|=|c|=0$.\end{example}
\begin{defn}
A \emph{tame isomorphism} $\mathcal{A}\to\mathcal{A}'$ of DGAs with
free generators $g_{1},\dots,g_{n}$ and $g'_{1},\dots,g'_{n}$ is
an automorphism of $\mathcal{A}$ of the form
\[
g_{i}\mapsto g_{i}+\varphi(g_{1},\dots,g_{i-1},g_{i+1},\dots g_{n})\]
which fixes all other $g_{j}$, followed by the isomorphism
$g_{i}\mapsto g'_{i}$ for all $i$. A \emph{stabilization} of the DGA
$\mathcal{A}$ preserves all generators
and differentials and adds two new generators $a,b$ in gradings
$k+1$ and $k$ for some $k$, satisfying $\partial a=b$ and $\partial b=0$. Two
DGAs are said to be \emph{stable tame isomorphic} if they are related
by a sequence of tame isomorphisms, stabilizations, and destabilizations.\end{defn}
\begin{thm}[\cite{Chekanov:2002p539,Ng:2003p540}]
 The differential $\partial$ on $Ch(K)$ satisfies $\partial^{2}=0$
and lowers degree by 1, and the stable tame isomorphism type of $Ch(K)$
is an invariant of $K$ up to Legendrian isotopy.
\end{thm}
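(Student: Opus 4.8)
The plan is to reduce the whole statement to Chekanov's original theorem for Lagrangian projections by means of the resolution construction of Ng \cite{Ng:2003p540}. First I would fix, for each simple front $K$, a Lagrangian diagram $\mathrm{Res}(K)$ obtained by the standard local replacements: smooth each left cusp, replace each right cusp by a small ``dip'' contributing exactly one crossing, keep each front crossing as a crossing (perturbing slopes so that the strand of smaller slope still passes over), and tilt the remainder of the diagram so that what results is a genuine Lagrangian projection of a Legendrian knot Legendrian isotopic to $K$. The point of the resolution is that the combinatorial Chekanov DGA of $\mathrm{Res}(K)$ — generated by its crossings, with differential counting immersed disks having one positive corner and the rest negative (convex) corners — has generators in natural bijection with the vertices of $K$.

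The heart of the argument is then a dictionary lemma: for each vertex $v$, the immersed disks in $\mathrm{Res}(K)$ with their unique positive corner at the crossing corresponding to $v$ are in a grading-preserving and corner-word-preserving bijection with $\mathrm{Disk}(K;v)$. The admissibility conditions for front disks — smooth embedding away from vertices and left cusps, a unique rightmost point at $v$, a unique leftmost point at a left cusp, and the one-quadrant condition at each corner — are precisely what survives the resolution of the corresponding Lagrangian disk; the permitted non-smoothness at a left cusp corresponds to the way a disk in $\mathrm{Res}(K)$ can wrap around a smoothed cusp, and the extra summand $1$ in $\partial c$ for a right cusp $c$ records the trivial bigon created at its dip. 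Checking this bijection against the finite list of local models near crossings, right cusps, and left cusps is the main obstacle; everything else is bookkeeping.

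Granting the dictionary lemma, $Ch(K)$ is isomorphic as a DGA to the Lagrangian Chekanov algebra of $\mathrm{Res}(K)$, so $\partial^{2}=0$ and the fact that $\partial$ lowers the $\Gamma$-grading by $1$ follow immediately from \cite{Chekanov:2002p539}. Alternatively, one can argue intrinsically: the degree statement is a direct computation summing the Maslov-potential jumps around $\partial D$ for an admissible disk $D$, and $\partial^{2}=0$ follows from the usual pairing argument, in which the words occurring in $\partial^{2}c$ are matched in pairs by either gluing two admissible disks along a shared boundary edge or, conversely, splitting a single ``broken'' configuration in its two possible ways.

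For the invariance statement I would invoke Świątkowski's theorem \cite{Swiatkowski:1992p1071} that two fronts represent the same Legendrian knot if and only if they are related by a finite sequence of the three Legendrian Reidemeister moves pictured above, and verify that each move induces a stable tame isomorphism of the associated DGAs. The moves that leave the vertex set unchanged induce tame isomorphisms obtained by tracking how the finitely many admissible disks near the move are created or destroyed; the move that introduces a new crossing--cusp pair changes the count of vertices and induces a stabilization adjoining generators $a,b$ with $\partial a=b$, $\partial b=0$ in the appropriate gradings. Since stable tame isomorphism is an equivalence relation and is preserved under each move, the stable tame isomorphism type of $Ch(K)$ depends only on the Legendrian isotopy class of $K$.
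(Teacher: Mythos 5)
Your proposal is correct, but it is organized differently from what the paper actually supplies for this statement. The paper defers the full theorem to the cited references and reproduces only one ingredient: the proof that $\partial^{2}=0$, carried out directly in the front diagram by pairing the monomials of $\partial^{2}c$ with regions $R=D\cup D'$ that admit exactly two splittings into admissible disks (obtained by following either strand through the special corner $c'$ rightward until it meets $\partial R$ again). This is precisely your ``intrinsic'' alternative, and the paper foregrounds it because variations of this region-splitting argument are reused later to prove $\partial^{2}=0$ for $D(K^{D})$ and $DA(K)$ and to show that $w$ and $w'$ are chain maps. Your primary route --- Ng's resolution of the front to a Lagrangian projection, a dictionary lemma matching admissible front disks with immersed disks having one positive corner, and an appeal to Chekanov's theorem --- is the standard argument of the cited sources and yields the whole statement at once; in particular, since the resolution is Legendrian isotopic to $K$, invariance is inherited directly from the Lagrangian case, so your separate verification of the front Reidemeister moves is an optional second route rather than a necessary step. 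One caution on the dictionary lemma: the embeddedness of the front disks is a global consequence of simplicity (all right cusps sharing an $x$-coordinate forces a disk with a unique rightmost point to be embedded), not something visible in the local models alone. What the resolution route does not buy you is the front-diagram combinatorics that the remainder of the paper builds on, which is why the paper isolates the region-splitting argument instead.
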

Finally, we will outline the proof from \cite{Ng:2001p667} that $\partial^{2}=0$,
since we will use slight variations of this argument repeatedly in
the following sections. For any vertex $c$ of $K$, a monomial of
$\partial c$ is the product $c_{1}c_{2}\dots c_{k}$ of corners along
the boundary of a disk $D\in\mathrm{Disk}(K;c)$, and the corresponding
terms of $\partial^{2}c$ involve replacing some $c_{i}$ in that
product with $\partial c_{i}$. Since $\partial c_{i}$ is the sum
of terms $\partial D'$ over disks $D'\in\mathrm{Disk}(K;c_{i})$,
the monomials of $\partial^{2}c$ are products of corners of certain
regions $R=D\cup D'$. In $R$, the disks $D$ and $D'$ intersect
only along a segment of a strand through $c_{i}$; at the other endpoint
$c'$ of $D\cap D'$, the region $R$ occupies three of four quadrants;
and $R$ has two left cusps, one coming from each of $D$ and $D'$.

\begin{figure}
\begin{centering}
\includegraphics{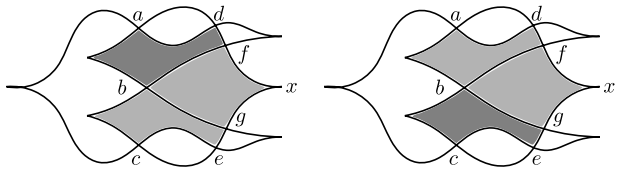}
\par\end{centering}

\caption{Two ways to split a region appearing in the proof that $\partial^{2}=0$.\label{fig:ch-diff-squared}}

\end{figure}
Figure \ref{fig:ch-diff-squared} shows an example of such a region
$R$ appearing in the computation of $\partial^{2}x$ for the simple
front of Figure \ref{Flo:legendrian-trefoil}. On the left, the lighter
disk gives the monomial $fce$ of $\partial x$, and differentiating
this at $f$ gives us a term $(dab)ce$ of $\partial^{2}x$ via the
darker disk. On the right, however, the lighter disk gives the monomial
$dag$ of $\partial x$, and differentiating at $g$ contributes a
term $da(bce)$ from the darker disk. Thus the term $dabce$ appears
twice in $\partial^{2}x$, and since $Ch(K)$ is defined over $\mathbb{Z}/2\mathbb{Z}$
these terms sum to zero.

This argument works in general: following either of the two strands
through the point $c'$ ($b$ in Figure \ref{fig:ch-diff-squared})
until it intersects $\partial R$ again (at $f$ or $g$ in Figure
\ref{fig:ch-diff-squared}) gives us exactly two ways to split $R$
into a union of disks $D\cup D'$ which contribute the same monomial
to $\partial^{2}c$. Since the terms of $\partial^{2}c$ cancel in
pairs, we must have $\partial^{2}c=0$.

\section{The bordered Chekanov-Eliashberg algebra}

\subsection{The algebra of a finite set of points}

Let $n$ be a nonnegative integer, and suppose we have a vertical dividing
line which intersects a front in $n$ points. (Note that $n$ will
always be even in practice, but we do not need this assumption for
now.) Furthermore, suppose we have a potential function $\mu:\{1,2,\dots,n\}\to\Gamma$,
where $\Gamma$ is a cyclic group such as $\mathbb{Z}$ or $\mathbb{Z}/2\mathbb{Z}$.
\begin{defn}
The algebra $I_{n}^{\mu}$ is the DGA generated freely over $\mathbb{F}$
by elements $\{\rho_{ij}\mid1\leq i<j\leq n\}$ with grading $|\rho_{ij}|=\mu(i)-\mu(j)-1$.
It has a differential defined on these generators as \[
\partial\rho_{ij}=\sum_{i<k<j}\rho_{ik}\rho_{kj}\]
and extended to all of $I_{n}^{\mu}$ by the Leibniz rule.
\end{defn}
Although the grading depends on $\mu$, we will in general omit it
from the notation and simply write $I_{n}$.
\begin{prop}
The differential $\partial$ lowers the grading by $-1$ and satisfies
$\partial^{2}=0$.\end{prop}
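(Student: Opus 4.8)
The plan is to verify both claims directly from the defining formula $\partial\rho_{ij}=\sum_{i<k<j}\rho_{ik}\rho_{kj}$. For the grading statement, I would compute $|\rho_{ik}\rho_{kj}| = |\rho_{ik}| + |\rho_{kj}| = (\mu(i)-\mu(k)-1) + (\mu(k)-\mu(j)-1) = \mu(i)-\mu(j)-2 = |\rho_{ij}| - 1$ for each $k$ with $i<k<j$, so every monomial in $\partial\rho_{ij}$ has grading exactly one less than $|\rho_{ij}|$. Since $\partial$ is extended by the (graded) Leibniz rule, it lowers grading by $1$ on all of $I_n^\mu$; this is the easy half.

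For $\partial^2 = 0$, since $\partial$ is a derivation it suffices to check $\partial^2\rho_{ij} = 0$ on generators. I would expand
\[
\partial^2\rho_{ij} = \partial\left(\sum_{i<k<j}\rho_{ik}\rho_{kj}\right) = \sum_{i<k<j}\left((\partial\rho_{ik})\rho_{kj} + \rho_{ik}(\partial\rho_{kj})\right),
\]
where there is no sign because we work over $\mathbb{F}=\mathbb{Z}/2\mathbb{Z}$. Substituting the definition, the first group of terms is $\sum_{i<k<j}\sum_{i<\ell<k}\rho_{i\ell}\rho_{\ell k}\rho_{kj}$ and the second is $\sum_{i<k<j}\sum_{k<m<j}\rho_{ik}\rho_{km}\rho_{mj}$. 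Each is a sum of monomials $\rho_{ab}\rho_{bc}\rho_{cd}$ with $i = a < b < c < d = j$: in the first sum set $(a,b,c,d)=(i,\ell,k,j)$ and in the second set $(a,b,c,d)=(i,k,m,j)$. Thus both sums range over exactly the same index set $\{(b,c)\mid i<b<c<j\}$ and produce the identical collection of monomials, so $\partial^2\rho_{ij}$ is twice this sum, which vanishes over $\mathbb{Z}/2\mathbb{Z}$.

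The only point requiring a little care — and the one I would expect to be the main obstacle, though it is minor — is confirming that the extension of $\partial$ by the Leibniz rule really does imply $\partial^2 = 0$ on all of $I_n^\mu$ once it holds on generators: one checks that if $\partial^2$ annihilates generators then, being a sum of compositions of derivations, $\partial^2$ is itself a derivation (again using that signs disappear mod $2$), and a derivation vanishing on a generating set vanishes identically. This is a standard fact, parallel to the combinatorial pairing argument sketched for $Ch(K)$ in the previous section, but here it is purely algebraic since the "regions" are just the chains $i<b<c<j$ and the two ways of splitting them correspond precisely to differentiating the left versus the right factor.
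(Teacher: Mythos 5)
Your proof is correct and is precisely the ``straightforward calculation'' the paper invokes without writing out: the grading count, the reindexing of $\partial^2\rho_{ij}$ as a double sum over chains $i<b<c<j$ appearing twice, and the observation that $\partial^2$ is itself a derivation mod $2$ are all exactly what is intended. Nothing to add.
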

\begin{proof}
Both assertions follow by a straightforward calculation.
\end{proof}
\begin{rem}
This algebra appears in \cite{Mishachev:2003p676} as the {}``interval
algebra'' $I_{n}(n)$, where a closely related construction determines
the DGA of the $n$-copy of a topological unknot or of a negative
torus knot.
\end{rem}
The purpose of this algebra is to remember where disks that might
be counted by a differential cross the dividing line: if the boundary
of a disk starts on the right side of the line and crosses it at the
$i$th and $j$th points, we will use the element $\rho_{ij}$ as
a placeholder for the contribution to the boundary of the disk from
the left side of the dividing line.

\subsection{The type A algebra\label{sec:type-A-algebra}}

Let $K^{A}$ be the left half of a simple Legendrian front diagram
divided along some fixed vertical line, and suppose we have a Maslov
potential $\mu$ assigning an element of the cyclic group $\Gamma$
to each strand of $K^{A}$.

\begin{figure}
\begin{centering}
\includegraphics{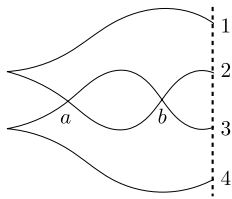}
\par\end{centering}

\caption{A half-diagram $K^{A}$ constructed from the trefoil of Figure \ref{fig:simple-trefoil}.}
\label{Flo:a-trefoil}

\end{figure}

\begin{defn}
The \emph{type A} algebra $A(K^{A})$ is the DGA generated freely
over $\mathbb{F}$ by the vertices of $K^{A}$. Each cusp has grading
$|c|=1$, and if a crossing $c$ has top strand $s_{1}$ and bottom
strand $s_{2}$, then its grading is $|c|=\mu(s_{1})-\mu(s_{2})$.

We define a differential $\partial$ on $A(K^{A})$ exactly as in
the original algebra $Ch(K)$: $\partial c=\sum\partial D$ if $c$
is a crossing and $\partial c=1+\sum\partial D$ if $c$ is a cusp,
where $D$ ranges over all disks in $\mathrm{Disk}(K^{A};c)$.
\end{defn}
The differential is clearly well-defined, since for any vertex $c$
of $K^{A}$ each term $\partial D$ in $\partial c$ is a monomial
consisting of vertices to the left of $c$ and these vertices are
all in $K^{A}$. Furthermore, $\partial^{2}=0$ on $A(K^{A})$ since
the differential on $Ch(K)$ also satisfies $\partial^{2}=0$ and
$A(K^{A})$ is a subalgebra of $Ch(K)$.

Although $A(K^{A})$ seems fairly uninteresting on its own, if the
dividing line intersects it in $n$ points, numbered in order from
$x_{1}$ at the top to $x_{n}$ at the bottom, then $A(K^{A})$ admits
a useful map from $I_{n}^{\mu}$. By giving $I_{n}$ the potential
$\mu$, we mean that the potential at $x_{i}$ should equal the potential
of the corresponding strand of $K^{A}$.
\begin{defn}
\label{def:half-disks-a}Let $\mathrm{Half}_{A}(K^{A};i,j)$ be the
set of admissible embedded left half-disks in $K^{A}$.  These are defined
identically to admissible disks, but instead of having a unique rightmost
vertex we require the rightmost part of the boundary to be
the segment of the dividing line from $x_{i}$ to
$x_{j}$.  For such a half-disk $H$, we define the monomial $\partial H$
to be the product of its corners in $K^{A}$, traversed in counterclockwise
order from $x_{i}$ to $x_{j}$. 
\end{defn}
We can now define an algebra homomorphism $w:I_{n}\to A(K^{A})$ by
the formula \[
w(\rho_{ij})=\sum_{H\in\mathrm{Half}_{A}(K^{A};i,j)}\partial H.\]
For example, in Figure \ref{Flo:a-trefoil} the algebra $A(K^{A})$
is generated freely by $a$ and $b$ with $\partial a=\partial b=0$,
and we can compute the values of $w$ as follows:

\begin{center}
\begin{tabular}{lll}
$w(\rho_{12})=ab+1$ & $w(\rho_{14})=0$ & $w(\rho_{24})=a$\tabularnewline
$w(\rho_{13})=a$ & $w(\rho_{23})=0$ & $w(\rho_{34})=ba+1$\tabularnewline
\end{tabular}
\par\end{center}
\begin{lem}
\label{lem:w-degree-zero}The map $w$ preserves gradings, i.e. $|\rho_{ij}|=|w(\rho_{ij})|$.\end{lem}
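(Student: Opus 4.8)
The plan is to show that every half-disk $H \in \mathrm{Half}_A(K^A; i, j)$ contributes a monomial $\partial H$ of degree exactly $\mu(i) - \mu(j) - 1$, which equals $|\rho_{ij}|$ by definition; since $w(\rho_{ij})$ is a sum of such monomials, the result follows provided the sum is nonempty, and if it is empty then $w(\rho_{ij}) = 0$, which by our grading conventions has no well-defined degree constraint to violate (equivalently, we only need to check degree for the terms that actually appear). So fix a half-disk $H$ with corners $c_1, c_2, \dots, c_k$ read counterclockwise from $x_i$ to $x_j$, so that $\partial H = c_1 c_2 \cdots c_k$ and $|\partial H| = \sum_\ell |c_\ell|$.

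First I would set up the bookkeeping via the Maslov potential along the boundary $\partial H$. Orient $\partial H$ counterclockwise starting at $x_i$ and ending at $x_j$ (along the dividing line). As we traverse $\partial H$, the strand of $K^A$ we are on changes only at the vertices and left cusps on the boundary, and the key observation is how the potential $\mu$ of the current strand changes at each such feature: at a left cusp the potential jumps by $\pm 1$ depending on orientation (specifically, passing a left cusp the way a disk boundary does changes $\mu$ by $+1$ going one way around), and at a corner $c_\ell$ — a crossing where the boundary turns from one branch to the other through one of the four local quadrants — the potential changes by exactly $\pm|c_\ell|$, with the sign determined by which pair of the four quadrant-edges the boundary uses and in which direction. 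The standard computation (this is the front-projection analogue of the degree count in Chekanov's original Lagrangian argument, and appears in Ng's work) shows that summing all these local changes around $\partial H$ from $x_i$ to $x_j$ telescopes: the total change in potential is $\mu(j) - \mu(i)$ on the strand side, while the left cusps contribute a net $-1$ (one leftmost left cusp is visited, accounting for the extra $-1$ that also shows up in the generator grading $|\rho_{ij}| = \mu(i) - \mu(j) - 1$), and the corners contribute $-\sum_\ell |c_\ell| = -|\partial H|$. Rearranging gives $|\partial H| = \mu(i) - \mu(j) - 1 = |\rho_{ij}|$.

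The main obstacle — and the only place real care is needed — is the sign and quadrant analysis at the corners: one must verify that traversing a corner $c_\ell$ counterclockwise as part of a disk boundary always contributes exactly $-|c_\ell|$ (not $+|c_\ell|$) to the running potential, and similarly pin down the contribution of left cusps depending on which way the boundary wraps around them. This is precisely the same local computation that underlies the well-definedness of the grading on $Ch(K)$ and the statement that $\partial$ lowers degree by $1$ there; in fact the cleanest route is to observe that a half-disk $H$ together with the segment $[x_i, x_j]$ of the dividing line behaves, for the purposes of this potential-telescoping argument, exactly like an admissible disk in $Ch(K)$ whose "rightmost vertex" has been replaced by the interval — the edge along the dividing line contributes no potential change and no corner, so the same telescoping identity that gives $|c| = 1 + \sum |c_\ell|$ for a right cusp's admissible disks (equivalently $\partial$ lowering degree by $1$) instead yields $\mu(i) - \mu(j) = 1 + |\partial H|$ here, because the two strands entering and leaving the dividing line at $x_i$ and $x_j$ have potentials $\mu(i)$ and $\mu(j)$ rather than differing by $1$ as the two branches at a crossing or cusp would. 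I would phrase the proof by invoking that already-established local computation rather than redoing it, noting only the one modification at the right end of the disk. Once the corner and cusp contributions are fixed, the telescoping is immediate and the lemma follows.
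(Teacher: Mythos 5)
Your argument is correct and is essentially the paper's proof: both telescope the Maslov potential along $\partial H$ from $x_i$ to $x_j$, with each corner $c_\ell$ accounting for a potential drop of $|c_\ell|$ and the unique leftmost left cusp supplying the extra $-1$ via $\mu(s_1)=\mu(s_2)+1$, yielding $|\partial H|=\mu(x_i)-\mu(x_j)-1=|\rho_{ij}|$. The paper simply organizes the same computation by splitting the boundary at the left cusp into two arcs rather than phrasing it as a single traversal.
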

\begin{proof}
Any half-disk $H\in\mathrm{Half}_{A}(K^{A};i,j)$ has leftmost point
at a left cusp $y$. As we follow the boundary of $H$ from $x_{i}$
to $y$, we change strands in $K^{A}$ at corners $c_{1},c_{2},\dots,c_{k}$,
and then while following from $y$ to $x_{j}$ we change strands at
corners $c_{1}',c_{2}',\dots,c_{l}'$; by definition $\partial H=c_{1}\dots c_{k}c_{1}'\dots c_{l}'$.
Now the difference in potential between $x_{i}$ and the top strand
$s_{1}$ at $y$ is $|c_{1}|+\dots+|c_{k}|$, and the difference between
the bottom strand $s_{2}$ at $y$ and $x_{j}$ is $|c_{1}'|+\dots+|c_{l}'|$,
hence \[
(\mu(x_{i})-\mu(s_{1}))+(\mu(s_{2})-\mu(x_{j}))=\sum_{l}|c_{l}|+\sum_{l'}|c_{l}'|.\]
But the left hand side is $\mu(x_{i})-\mu(x_{j})-1=|\rho_{ij}|$ since
$\mu(s_{1})=\mu(s_{2})+1$, and the right hand side is $|\partial H|$,
so we are done.\end{proof}
\begin{prop}
\label{pro:w-chain-map}The map $w$ is a chain map.\end{prop}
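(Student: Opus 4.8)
The plan is to show that $w \circ \partial = \partial \circ w$ by evaluating both sides on a generator $\rho_{ij}$ of $I_n$ and matching the resulting monomials in $A(K^A)$. On one side, $w(\partial \rho_{ij}) = \sum_{i<k<j} w(\rho_{ik})w(\rho_{kj})$ is a sum over pairs of admissible left half-disks $H_1 \in \mathrm{Half}_A(K^A;i,k)$ and $H_2 \in \mathrm{Half}_A(K^A;k,j)$ whose boundaries meet the dividing line along the segments $[x_i,x_k]$ and $[x_k,x_j]$; the contribution is the product $(\partial H_1)(\partial H_2)$ of their corners read counterclockwise. On the other side, $\partial(w(\rho_{ij})) = \sum_{H \in \mathrm{Half}_A(K^A;i,j)} \partial(\partial H)$, where differentiating the monomial $\partial H = c_1 \cdots c_m$ means replacing each corner $c_t$ by $\partial c_t = \sum_{D \in \mathrm{Disk}(K^A;c_t)} \partial D$, so each term comes from a half-disk $H$ together with an admissible disk $D$ glued along a strand through one of the corners of $H$.

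The key step is the geometric bijection, in the spirit of the $\partial^2 = 0$ argument recalled in Section~1.2: each region appearing on either side is a union $R = H \cup D$ (a half-disk glued to a disk) or $R = H_1 \cup H_2$ (two half-disks sharing the point $x_k$ on the dividing line), and in both cases $R$ is a half-disk in $K^A$ with rightmost boundary a sub-segment of the dividing line, with two left cusps, and with one extra interior crossing $c'$ at which $R$ occupies three of the four quadrants. Exactly as before, following either strand through $c'$ until it again meets $\partial R$ produces precisely two decompositions of $R$: one of the form $H \cup D$ and one of the form $H_1 \cup H_2$, or two of the same type. The new feature is that $\partial R$ meets the dividing line, so the relevant strand through $c'$ can terminate on the dividing line rather than on a strand of $K^A$; when it terminates at a point $x_k$ strictly between $x_i$ and $x_j$ we get the splitting $H_1 \cup H_2$ contributing to $w(\partial\rho_{ij})$, and when it terminates on an honest strand of $K^A$ we get the splitting $H \cup D$ contributing to $\partial(w(\rho_{ij}))$. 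I would organize the argument by fixing a monomial $M$ of either side, letting $R$ be the associated region, and checking that the two endpoints of "follow the strand through $c'$" sort the decompositions of $R$ into matching pairs, so that all terms cancel mod $2$ except that every term of $w(\partial\rho_{ij})$ is matched with a term of $\partial(w(\rho_{ij}))$.

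I expect the main obstacle to be the careful case analysis at the "far" end of the strand through $c'$: one must confirm that a strand of $\partial R$ emanating from $c'$ either hits the dividing line at exactly one of the points $x_{i+1},\dots,x_{j-1}$ (giving $H_1 \cup H_2$) or re-enters the interior along a strand of $K^A$ where an admissible disk $D$ can be split off (giving $H \cup D$), and that these are genuinely the only two local decompositions of $R$. This requires checking that the resulting pieces satisfy all of the admissibility conditions—unique leftmost point at a left cusp (each of $H_1$, $H_2$, $H$, $D$ inherits one of the two left cusps of $R$), the single-quadrant condition at each remaining corner, and correct rightmost boundary—and, crucially, that the degree bookkeeping is consistent, which is guaranteed by Lemma~\ref{lem:w-degree-zero} together with the fact that $\partial$ lowers degree by one on both $I_n$ and $A(K^A)$. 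Once the bijection of monomials is in place, $w\circ\partial = \partial\circ w$ follows immediately since everything is defined over $\mathbb{F} = \mathbb{Z}/2\mathbb{Z}$ and $w$ is already a graded algebra homomorphism.
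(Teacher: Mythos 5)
Your proposal is correct and follows essentially the same argument as the paper: both identify the terms of $\partial w(\rho_{ij})$ and of $\sum_k w(\rho_{ik})w(\rho_{kj})$ with splittings of a common region $R$ (a half-disk with two left cusps and a distinguished corner $c'$ occupying three quadrants), and both pair the splittings by following the two strands through $c'$ until they hit $\partial R$, with the dichotomy of ending on the dividing line versus on a strand of $K^A$ sorting the contributions between the two sides. Nothing further is needed.
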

\begin{proof}
We need to check that $w(\partial\rho_{ij})=\partial w(\rho_{ij})$
for each $i,j$. Letting $w_{ij}=w(\rho_{ij})$ for convenience, this
is the assertion that \[
\partial w_{ij}=\sum_{i<k<j}w_{ik}w_{kj}.\]
The element $w_{ij}\in A(K^{A})$ is a sum of monomials corresponding
to the boundaries of half-disks $H$, so the monomials in $\partial w_{ij}$
are precisely those obtained by taking such an $H$ and gluing it
to full disks which start at a corner $c$ of $\partial H$. The boundary
of the resulting region $R$ goes from $x_{i}$ to a left cusp, back
to a vertex $c'$ where $R$ occupies three of the four adjacent quadrants,
to another left cusp, and then right to $x_{j}$ and back to $x_{i}$
along the dividing line; the associated monomial in $\partial w_{ij}$
is the product of all corners of the disk except for $c'$.

\begin{figure}
\begin{centering}
\includegraphics{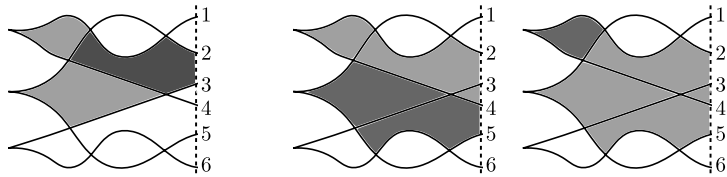}
\par\end{centering}

\caption{The region on the left can be broken into disks representing two monomials
of $\partial w(\rho_{23})$, by merging the dark region with either
light one to get the corresponding monomial of $w(\rho_{23})$; at
center and right the region is broken into pieces representing monomials
of $w(\rho_{24})w(\rho_{45})$ and $\partial w(\rho_{25})$, respectively.\label{Flo:A-algebra-chain-map}}

\end{figure}

The region $R$ can be naturally split into a union of two admissible
disks or half-disks in two ways (see Figure \ref{Flo:A-algebra-chain-map}):
follow either of the strands of $\partial R$ which intersect at $c'$
as far right as possible until they intersect $\partial R$ again.
If such a path does not end on the dividing line, this splitting contributes
the related monomial to $\partial w_{ij}$; otherwise it ends at some
point $x_{k}$ strictly between $x_{i}$ and $x_{j}$ and so it contributes
that monomial to the product $w_{ik}w_{kj}$. Therefore the monomials
in the sum $\partial w_{ij}+\sum w_{ik}w_{kj}$ can be paired together
as the possible splittings of these regions $R$, and since the two
monomials in each pair are equal, the sum must be zero.
\end{proof}
Since $w$ is a chain map which preserves degree, it is an actual
morphism $I_{n}\to A(K^{A})$ in the category of DGAs.

\subsection{The type D algebra}

Let $K^{D}$ be the right half of a simple Legendrian front diagram
divided along a vertical line, with Maslov potential $\mu$. Let $I_{n}^{\mu}$
be the algebra associated to the points on the intersection of $K^{D}$
and the dividing line, again numbered from $x_{1}$ at the top to
$x_{n}$ at the bottom.

\begin{figure}
\begin{centering}
\includegraphics{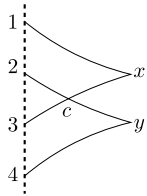}
\par\end{centering}

\caption{A half-diagram $K^{D}$ constructed from the trefoil of Figure \ref{fig:simple-trefoil}.\label{fig:trefoil-d}}

\end{figure}

\begin{defn}
The set $\mathrm{Half}_{D}(K^{D};c)$ consists of all admissible right
half-disks $H$ embedded in $K^{D}$ with rightmost vertex $c$.  These are
defined the same way as admissible disks, but instead of having a unique
leftmost point at a left cusp we require the leftmost part of the boundary
to be a segment of the dividing line between some points $x_{i}$ and $x_{j}$.
We define the word
$\partial H$ to be the product of the following in order: the corners
between $c$ and $x_{i}$ on the boundary of the disk; the element
$\rho_{ij}\in I_{n}$; and then the corners between $x_{j}$ and $c$.
\end{defn}
Note that the set $\mathrm{Disk}(K^{D};c)$ can be defined just as
in the original Chekanov-Eliashberg algebra, so in particular the
left cusp of a disk $D\in\mathrm{Disk}(K^{D};c)$ must lie in the
half-diagram $K^{D}$. 
\begin{defn}
The \emph{type D} algebra $D(K^{D})$ is the DGA generated freely
over $\mathbb{F}$ by the vertices of $K^{D}$ and the generators
$\rho_{ij}$ of $I_{n}$. The cusps have grading $1$ and the crossings
have grading $|c|=\mu(s_{1})-\mu(s_{2})$, where $s_{1}$ and $s_{2}$
are the top and bottom strands through $c$, and the elements $\rho_{ij}$
have grading $\mu(x_{i})-\mu(x_{j})-1$ just as in $I_{n}$.

If $c$ and $c'$ are a crossing and cusp of $K^{D}$, respectively,
then the differential on $D(K^{D})$ is given by the formulas \begin{eqnarray*}
\partial c & = & \sum_{D\in\mathrm{Disk}(K^{D};c)}\partial D+\sum_{H\in\mathrm{Half}_{D}(K^{D};c)}\partial H\\
\partial c' & = & 1+\sum_{D\in\mathrm{Disk}(K^{D};c')}\partial D+\sum_{H\in\mathrm{Half}_{D}(K^{D};c')}\partial H\\
\partial\rho_{ij} & = & \sum_{i<k<j}\rho_{ik}\rho_{kj}.\end{eqnarray*}
\end{defn}
\begin{example}
For $K^{D}$ the half diagram of Figure \ref{fig:trefoil-d}, the
algebra $D(K^{D})$ has generators $x,y,c$ as well as the generators
$\rho_{ij}$, $1\leq i<j\leq4$, of $I_{4}$. The differential on
the vertices is given by \begin{eqnarray*}
\partial x & = & 1+\rho_{12}c+\rho_{13}\\
\partial y & = & 1+\rho_{24}+c\rho_{34}\\
\partial c & = & \rho_{23}.\end{eqnarray*}
\end{example}
\begin{prop}
\label{pro:d-algebra-differential}The differential on $D(K^{D})$
has degree $-1$, and $\partial^{2}=0$.\end{prop}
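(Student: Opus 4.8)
The plan is to follow the same geometric bookkeeping used in the proof that $\partial^2 = 0$ on $Ch(K)$, adapted to allow boundaries that run along the dividing line. First I would handle the degree claim, which is essentially local: the full-disk terms $\partial D$ lower degree by $1$ exactly as in $Ch(K)$ (this is part of the cited theorem of Chekanov and Ng); the term $\partial \rho_{ij}$ lowers degree by $1$ by the earlier proposition on $I_n^\mu$; and for a half-disk term $\partial H \in \mathrm{Half}_D(K^D;c)$ the computation is the hybrid of those two, combining the grading argument from Lemma~\ref{lem:w-degree-zero} (the corner gradings between $c$ and $x_i$, resp.\ $x_j$ and $c$, telescope to the potential differences $\mu(x_i) - \mu(s_1)$ and $\mu(s_2) - \mu(x_j)$) with the fact that $|\rho_{ij}| = \mu(x_i) - \mu(x_j) - 1$ and $|c| = \mu(s_1) - \mu(s_2) + 1$ at the rightmost vertex (or $|c| = 1$ at a cusp, with the two strands meeting there contributing the extra $-1$). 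Assembling these telescoping sums gives $|\partial H| = |c| - 1$.

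For $\partial^2 = 0$ I would expand $\partial^2 c$ for a vertex $c$ of $K^D$ and sort the resulting monomials by the combinatorial type of the region they come from. There are several cases according to which terms we differentiate. Differentiating a full-disk monomial at one of its corners produces a region $R = D \cup D'$ exactly as in the $Ch(K)$ argument, and these cancel in pairs by the familiar ``follow the strand through $c'$ until it hits $\partial R$ again'' splitting. Differentiating a half-disk monomial $\partial H$ at one of its corners in $K^D$ glues a full disk onto $H$ along a strand; the resulting region again has a distinguished point $c'$ where it occupies three of four quadrants, and following the strand through $c'$ to the right gives the matching monomial --- either another such ``half-disk plus full disk'' term, or (if the strand runs into the dividing line) a term arising from the third source, namely differentiating $H$ at its $\rho_{ij}$ factor. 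This last source contributes $\sum_{i<k<j}(\text{corners})\,\rho_{ik}\rho_{kj}\,(\text{corners})$, and each such monomial corresponds to a half-disk region whose leftmost boundary meets the dividing line in three points $x_i, x_k, x_j$; splitting at the middle point $x_k$ shows it pairs with exactly one ``differentiate-a-corner-of-a-half-disk'' term (where the attached disk has its leftmost point on the dividing line, realized via the $\rho$ placeholder). So every monomial of $\partial^2 c$ occurs in exactly two of these ways and they cancel over $\mathbb{F}$.

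The main obstacle, and the step deserving the most care, is the bookkeeping at the dividing line: I must check that each region that can be split at a point $x_k$ on the dividing line is split there in exactly two ways, and that these two ways correspond to a term of $\partial(\text{half-disk})$ coming from $\partial \rho_{ij}$ on one hand and a term coming from differentiating a corner of a half-disk on the other --- with matching words, including the order of the $\rho$-factors and the ordinary corners. Concretely, a monomial of the shape $(\text{corners } c\to x_i)\,\rho_{ik}\rho_{kj}\,(\text{corners } x_j\to c)$ is produced by $\partial \rho_{ij}$ inside $\partial(\partial H)$ for the half-disk $H \in \mathrm{Half}_D(K^D;c)$ with endpoints $x_i, x_j$; it is also produced by differentiating, at a suitable corner, a half-disk $H' \in \mathrm{Half}_D(K^D;c)$ --- but here one must be careful, since the ``other'' splitting may instead glue on a genuine full disk whose leftmost point sits on the dividing line, so that the $\rho$ placeholder only appears after reassembly. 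I would therefore phrase the whole argument in terms of the regions $R$ themselves (unions of an admissible disk or half-disk in $K^D$ with one more admissible disk, possibly one whose leftmost ``cusp'' is replaced by a segment of the dividing line) and enumerate their two canonical decompositions, exactly paralleling Figure~\ref{fig:ch-diff-squared} and Figure~\ref{Flo:A-algebra-chain-map}; once the regions are the primary objects, the pairing and the cancellation are immediate, and the only real work is verifying that the list of region types is complete and that the $\rho$-placeholder substitutions keep the words consistent.
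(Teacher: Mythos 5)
Your proposal follows the paper's proof essentially verbatim: the degree claim is proved by telescoping Maslov potentials along the boundary of a half-disk and adding the $-1$ carried by $|\rho_{ij}|$, and $\partial^{2}=0$ is proved by organizing the monomials of $\partial^{2}v$ into regions $R$ and pairing their two canonical decompositions, the genuinely new case being a half-disk region meeting the dividing line along $[x_{i},x_{j}]$, where splitting along the strand through an intermediate point $x_{k}$ pairs a corner-differentiation term against the $\rho_{ik}\rho_{kj}$ term of $\partial\rho_{ij}$ (and the paper's observation that this strand must exit through $\partial R$ by simplicity is the completeness check you defer). One minor slip: the grading of a crossing is $|c|=\mu(s_{1})-\mu(s_{2})$, not $\mu(s_{1})-\mu(s_{2})+1$, though your stated conclusion $|\partial H|=|c|-1$ is the correct one and the telescoping argument does yield it.
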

\begin{proof}
To show that $\deg(\partial)=-1$ we only need to check that $|\partial H|=|v|-1$
for any $H\in\mathrm{Half}_{D}(K^{D};v)$, since it is already true
for full disks $D\in\mathrm{Disk}(K^{D};v)$ as in the case of $Ch(K)$.
Traversing the boundary of $H$ in counterclockwise order from $v$,
we pass through a series of corners $c_{1},\dots,c_{k}$; a segment
connecting two points $x_{a}$ and $x_{b}$ on the dividing line;
and then some more corners $c_{1}',\dots,c_{l}'$ on the way back
to $v$. Since the turn at each corner $c_{i}$ lowers the potential
by $|c_{i}|$, the potential at the top strand $s_{1}$ through $v$
satisfies $\mu(s_{1})-\mu(x_{a})=\sum|c_{i}|$, and likewise $\mu(x_{b})-\mu(s_{2})=\sum|c_{j}'|$
where $s_{2}$ is the bottom strand. Therefore \begin{eqnarray*}
|c|=\mu(s_{1})-\mu(s_{2}) & = & \mu(x_{a})-\mu(x_{b})+\sum_{i=1}^{k}|c_{i}|+\sum_{j=1}^{l}|c_{j}'|\\
 & = & \sum_{i=1}^{k}|c_{i}|+|\rho_{ab}|+\sum_{j=1}^{l}|c_{l}'|+1,\end{eqnarray*}
and since $\partial H=c_{1}\dots c_{k}\rho_{ab}c_{1}'\dots c_{l}'$
we have $|c|-1=|\partial H|$ as desired.

To prove that $\partial^{2}=0$, we proceed as in the proof of Proposition
\ref{pro:w-chain-map}. For a fixed vertex $v$, each monomial in
$\partial^{2}v$ can correspond to a region $R$ with right cusp at
$v$ and two left cusps, but now we need to consider the possibility
that these cusps might lie across the dividing line; in other words,
we may only see the algebra elements $\rho_{ij}$. If the special
vertex $c'$ between the left cusps where $R$ occupies three of four
quadrants appears to the right of the dividing line, then we map split
$R$ in two different ways just as before, by extending either strand
through $c'$ until it hits $\partial R$ again.

\begin{figure}
\centering{}\includegraphics{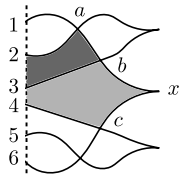}\caption{A region appearing in the proof that $\partial^{2}=0$ for $D(K^{D})$.\label{Flo:d-algebra-differential}}

\end{figure}

The only remaining case is that of a region where the special vertex
$c'$ may be to the left of the dividing line, so that if $K^{D}$ were
completed to a front diagram then $c'$ would be part of the left half $K^{A}$.
In this case $R\subset K^{D}$
is actually a half-disk which intersects the dividing line at some
points $x_{i}$ and $x_{j}$. For any $k$ satisfying $i<k<j$, the
strand through $x_{k}$ must intersect $\partial R$ somewhere; otherwise,
following it would lead us to a right cusp in the interior of $R$,
contradicting the assumption that $K^{D}$ is simple. Then this strand
together with $\partial R$ divides $R$ into a union of two half-disks,
one half-disk $H$ with rightmost vertex at $v$ whose monomial $\partial H$
appears as a term of $\partial R$, and one half-disk $H'$ with rightmost
vertex at some corner $v'$ of $\partial H$. The associated monomial
of $\partial^{2}v$ is obtained by replacing the generator $v'$ in
$\partial H$ with the monomial $\partial H'$, resulting in the monomial
$\partial R$ with $\rho_{ik}\rho_{kj}$ in place of $\rho_{ij}$
since each of $\rho_{ik}$ and $\rho_{kj}$ appear in exactly one
of $\partial H$ and $\partial H'$. But this is also the monomial
which we get from $\partial(\partial R)$ by differentiating the $\rho_{ij}$
term and picking out the $\rho_{ik}\rho_{kj}$ term of $\partial\rho_{ij}$,
so these monomials appear in pairs and their sum must be zero.

For example, Figure \ref{Flo:d-algebra-differential} shows such a
region whose associated monomial is $a\rho_{23}\rho_{34}c$ and which
appears twice in $\partial^{2}x$: once from the term $\partial(b\rho_{34}c)$
using the monomial $a\rho_{23}$ of $\partial b$, and once from the
term $\partial(a\rho_{24}c)$ using the monomial $\rho_{23}\rho_{34}$
of $\partial\rho_{24}$.
\end{proof}
Unlike the algebra $A(K^{A})$, this algebra {}``remembers'' the
interaction of disks with the boundary as part of its differential,
so its differential structure is necessarily more complicated. On the other
hand, the inclusion $I_{n}\hookrightarrow D(K^{D})$ is trivially
a chain map of degree 0, since the differential on elements $\rho_{ij}$
is identical in both algebras.

\subsection{The van Kampen theorem}

Let $K$ be a Legendrian front diagram split into a left half $K^{A}$
and a right half $K^{D}$ by a vertical dividing line which intersects
the front in $n$ points, and suppose we have a Maslov potential $\mu$
associated to this front. Then it is easy to see that we have a commutative
diagram of algebras \begin{equation}
\xymatrix{I_{n}\ar[d]_{w}\ar[r] & D(K^{D})\ar[d]^{w'}\\
A(K^{A})\ar[r] & Ch(K)}
\label{eq:pushout-diagram}\end{equation}
where $I_{n}\to D(K^{D})$ and $A(K^{A})\to Ch(K)$ are inclusion
maps and $w:I_{n}\to A(K^{A})$ is the map defined in section \ref{sec:type-A-algebra},
and the map $w':D(K^{D})\to Ch(K)$ sends vertices to themselves and
elements $\rho_{ij}$ to $w(\rho_{ij})\in A(K^{A})\subset Ch(K)$. 
\begin{lem}
\label{lem:w'-chain-map}The map $w':D(K^{D})\to Ch(K)$ is a chain
map of degree zero, and so the diagram above is a commutative diagram
of DGAs.\end{lem}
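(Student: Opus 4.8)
The plan is to verify the two required properties of $w'$ separately: that it preserves the grading, and that it commutes with the differentials. Grading is immediate from the definitions together with Lemma \ref{lem:w-degree-zero}: $w'$ fixes each vertex $v$ of $K^D$, which has the same grading in $D(K^D)$ and in $Ch(K)$ by construction, and it sends $\rho_{ij}$ to $w(\rho_{ij})\in A(K^A)\subset Ch(K)$, which has grading $|\rho_{ij}|$ by Lemma \ref{lem:w-degree-zero}. Since gradings are determined on generators and $w'$ is an algebra homomorphism, it preserves grading globally.

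For the chain map property I would check $w'(\partial g)=\partial(w'(g))$ on each generator $g$; the case $g=\rho_{ij}$ is exactly the statement that $w$ is a chain map, which is Proposition \ref{pro:w-chain-map}, so the real work is the case where $g$ is a vertex $v$ of $K^D$. Here $\partial v$ in $D(K^D)$ is $(\text{possibly }1)+\sum_{D\in\mathrm{Disk}(K^D;v)}\partial D+\sum_{H\in\mathrm{Half}_D(K^D;v)}\partial H$. Applying $w'$ leaves the full-disk terms and the constant unchanged (these monomials involve only vertices of $K^D$), and replaces each $\rho_{ab}$ appearing in a half-disk word $\partial H=c_1\cdots c_k\,\rho_{ab}\,c_1'\cdots c_l'$ by $w(\rho_{ab})=\sum_{H'\in\mathrm{Half}_A(K^A;a,b)}\partial H'$. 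On the other side, $w'(v)=v$, so $\partial(w'(v))=\partial v$ is computed in $Ch(K)$, where it counts all disks $D'\in\mathrm{Disk}(K;v)$. So the identity to prove is a bijection of monomials: the disks in $Ch(K)$ with rightmost vertex $v$ are precisely obtained by taking a right half-disk $H\in\mathrm{Half}_D(K^D;v)$ crossing the dividing line at $x_a,x_b$ and gluing on a left half-disk $H'\in\mathrm{Half}_A(K^A;a,b)$ along that segment, with $\partial D'$ being the concatenated word $c_1\cdots c_k\,(\partial H')\,c_1'\cdots c_l'$ — matching term by term.

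The main obstacle is making the gluing bijection precise: I would argue that any admissible disk $D'$ for $v$ in the full front $K$ either lies entirely in $K^D$ (contributing a full-disk term, matched with the $\sum\partial D$ part) or crosses the dividing line. In the latter case, I want to show $D'\cap\{\text{dividing line}\}$ is a single segment $[x_a,x_b]$ (not several): this follows because $\partial D'\subset K$ and $D'$ has a unique rightmost point at $v$ and a unique leftmost point at a left cusp, so the intersection with any vertical line is connected — and since $v\in K^D$ while the leftmost cusp lies in $K^A$, the dividing line is crossed exactly in one such segment. Cutting along this segment splits $D'$ uniquely into $H=D'\cap K^D\in\mathrm{Half}_D(K^D;v)$ and $H'=D'\cap K^A\in\mathrm{Half}_A(K^A;a,b)$; admissibility of the halves is inherited from that of $D'$, and conversely any such pair glues back to an admissible disk since the segment endpoints match. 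Reading the corners of $D'$ counterclockwise from $v$ gives exactly the corners of $H$ up to $x_a$, then the corners of $H'$ from $x_a$ to $x_b$, then the remaining corners of $H$; this is precisely $w'$ applied to the half-disk term $\partial H$. Hence the monomials of $w'(\partial v)$ and of $\partial(w'(v))$ coincide, and since $Ch(K)$ is over $\mathbb{F}$ no signs intervene. Combining the two cases of $g$, $w'$ is a degree-zero chain map, so the diagram \eqref{eq:pushout-diagram} is a commutative diagram of DGAs.
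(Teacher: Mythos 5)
Your proposal is correct and follows essentially the same route as the paper: verify degree preservation on generators via Lemma \ref{lem:w-degree-zero}, reduce the chain-map identity on $\rho_{ij}$ to Proposition \ref{pro:w-chain-map}, and for a vertex $v$ match the disks of $\mathrm{Disk}(K;v)$ that cross the dividing line with glued pairs $H\cup H'$ of half-disks, which is exactly the substitution $\partial H|_{\rho_{ij}=w'(\rho_{ij})}$. Your extra justification that an admissible disk meets the dividing line in a single segment (via the unique leftmost and rightmost points) is a detail the paper leaves implicit, and it is correct.
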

\begin{proof}
Clearly $w'$ preserves the degrees of vertices of $K^{D}$, and it
does the same for generators $\rho_{ij}$ by Lemma \ref{lem:w-degree-zero},
so $w'$ has degree zero.

For a generator $\rho_{ij}\in D(K^{D})$ we have $\partial(w'(\rho_{ij}))=\partial(w(\rho_{ij}))=w(\partial\rho_{ij})=w'(\partial\rho_{ij})$
since $w$ is a chain map. If instead we consider a vertex $v\in D(K^{D})$,
then (letting $\epsilon$ be $0$ if $v$ is a crossing and $1$ if
$v$ is a cusp) \begin{eqnarray*}
\partial(w'(v)) & = & \epsilon+\sum_{D\in\mathrm{Disk}(K;v)}\partial D\\
 & = & \epsilon+\sum_{D\in\mathrm{Disk}(K^{D};v)}\partial D+\sum_{i<j}\sum_{\substack{D\in\mathrm{Disk}(K;v)\\
x_{i},x_{j}\in\partial D}
}\partial D.\end{eqnarray*}
The disks $D$ with $x_{i},x_{j}\in\partial D$ can all be obtained
by gluing together a half-disk $H\in\mathrm{Half}_{D}(K^{D};v)$ and
another half-disk $H'\in\mathrm{Half}_{A}(K^{A};i,j)$, and all such
gluings give admissible disks, so \begin{eqnarray*}
\sum_{\substack{D\in\mathrm{Disk}(K;v)\\
x_{i},x_{j}\in\partial D}
}\partial D & = & \sum_{\substack{H\in\mathrm{Half}_{D}(K^{D};v)\\
x_{i},x_{j}\in\partial H}
}\sum_{H'\in\mathrm{Half}_{A}(K^{A};i,j)}\partial(H\cup H')\\
 & = & \sum_{\substack{H\in\mathrm{Half}_{D}(K^{D};v)\\
x_{i},x_{j}\in\partial H}
}\partial H|_{\rho_{ij}=w'(\rho_{ij})}\\
 & = & \sum_{\substack{H\in\mathrm{Half}_{D}(K^{D};v)\\
x_{i},x_{j}\in\partial H}
}w'(\partial H)\end{eqnarray*}
where the notation in the second line means that we have replaced
the unique instance of $\rho_{ij}$ in the monomial $\partial H$
with the expression $w'(\rho_{ij})$. But now \[
\partial(w'(v))=\epsilon+\sum_{D\in\mathrm{Disk}(K^{D};v)}w'(\partial D)+\sum_{i<j}\sum_{\substack{H\in\mathrm{Half}_{D}(K^{D};v)\\
x_{i},x_{j}\in\partial H}
}w'(\partial H)=w'(\partial v)\]
and so $w'$ is a chain map as desired.
\end{proof}
\begin{defn}
Let $A\stackrel{f}{\to}B$ and $A\stackrel{g}{\to}C$ be morphisms in some
category.  Suppose there is an object $D$ together with morphisms
$B\stackrel{h}{\to}D$ and $C\stackrel{i}{\to}D$ such that $h\circ f=i\circ g$.
Then $(D,h,i)$ is said to be the {\em pushout} of $f$ and $g$ if it satisfies the following universal property: for every commutative diagram \[
\xymatrix{A\ar[d]_{g}\ar[r]^{f} & B\ar[d]^{h}\ar[ddr]\\
C\ar[r]^{i}\ar[rrd] & D\\
 &  & X}
\]
there exists a unique morphism $D\to X$ making the diagram commute.
\end{defn}
Now that we have expended considerable effort to construct the commutative
diagram (\ref{eq:pushout-diagram}), the following theorem is an easy
consequence.
\begin{thm}
\label{thm:pushout-square}This diagram is a pushout square in the
category of DGAs.\end{thm}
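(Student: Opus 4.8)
The plan is to verify the universal property directly by working with the free generators of each algebra. Suppose we are given a DGA $X$ together with morphisms $\alpha:A(K^A)\to X$ and $\beta:D(K^D)\to X$ such that $\alpha\circ w = \beta\circ j$, where $j:I_n\to D(K^D)$ is the inclusion. Since $Ch(K)$ is freely generated over $\mathbb{F}$ by the vertices of $K$, and every vertex of $K$ lies in either $K^A$ or $K^D$ (these being the two halves of the front, overlapping only along the dividing line which contains no vertices), we can define an algebra homomorphism $\Phi:Ch(K)\to X$ by declaring $\Phi(v) = \alpha(v)$ for vertices $v\in K^A$ and $\Phi(v)=\beta(v)$ for vertices $v\in K^D$. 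This is the only possible definition: commutativity of the big diagram forces $\Phi$ to agree with $\alpha$ on the image of $A(K^A)\hookrightarrow Ch(K)$ and with $\beta$ on the image of $D(K^D)\hookrightarrow Ch(K)$, and together these images generate $Ch(K)$, so $\Phi$ is unique if it exists.

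The content of the proof is therefore to check that $\Phi$ is a morphism of DGAs, i.e.\ that it preserves the grading and commutes with the differential. Grading is immediate: the maps $A(K^A)\hookrightarrow Ch(K)$ and $D(K^D)\hookrightarrow Ch(K)$ are degree zero, and $\alpha,\beta$ are degree zero by hypothesis, so $\Phi$ preserves degree on each generator. For the differential, it suffices to check $\Phi(\partial v) = \partial \Phi(v)$ on each vertex $v$ of $K$. If $v\in K^A$, then $\partial v$ computed in $Ch(K)$ equals $\partial v$ computed in $A(K^A)$ (admissible disks for a vertex of $K^A$ lie entirely in $K^A$, as noted in Section~\ref{sec:type-A-algebra}), so $\Phi(\partial v)=\alpha(\partial v)=\partial\alpha(v)=\partial\Phi(v)$ since $\alpha$ is a chain map. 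If $v\in K^D$, I would split the disks in $\mathrm{Disk}(K;v)$ according to whether they cross the dividing line, exactly as in the proof of Lemma~\ref{lem:w'-chain-map}: the non-crossing ones are precisely the disks in $\mathrm{Disk}(K^D;v)$, and the crossing ones are gluings $H\cup H'$ of a half-disk $H\in\mathrm{Half}_D(K^D;v)$ with a half-disk $H'\in\mathrm{Half}_A(K^A;i,j)$. Grouping terms, one finds $\partial v$ in $Ch(K)$ equals $w'(\partial v)$, where $\partial v$ on the right is computed in $D(K^D)$; hence $\Phi(\partial v) = \Phi(w'(\partial v)) = \beta(\partial v) = \partial\beta(v) = \partial\Phi(v)$, using $\Phi\circ w' = \beta$ (which holds on vertices of $K^D$ by definition of $\Phi$, and on the $\rho_{ij}$ because $\Phi(w'(\rho_{ij}))=\Phi(w(\rho_{ij}))=\alpha(w(\rho_{ij}))=\beta(j(\rho_{ij}))=\beta(\rho_{ij})$) together with the fact that $\beta$ is a chain map.

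The one point requiring genuine care—and the step I expect to be the main obstacle—is the claim that $\Phi\circ w' = \beta$ as maps $D(K^D)\to X$, and more precisely the bookkeeping that $\partial v$ in $Ch(K)$ is the $w'$-image of $\partial v$ in $D(K^D)$. This is essentially a repackaging of Lemma~\ref{lem:w'-chain-map}, but one must be careful that the decomposition of crossing disks into $H\cup H'$ is a bijection and that it correctly substitutes $\rho_{ij}\mapsto w'(\rho_{ij})=w(\rho_{ij})$ in each monomial; the simplicity hypothesis on $K^D$ (no right cusp trapped in a compact region against the dividing line) is what guarantees that every such half-disk $H$ genuinely extends across the line. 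Once $\Phi\circ w'=\beta$ is in hand, the chain-map verification for $v\in K^D$ is a one-line computation, and combined with the $K^A$ case and the uniqueness observation above this establishes the universal property, so the diagram is a pushout square.
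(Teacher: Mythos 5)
Your proposal is correct and follows essentially the same route as the paper: define the map $\Phi$ on generators by cases, observe uniqueness, and reduce the chain-map check for vertices of $K^D$ to the fact that $w'$ is a chain map (the paper's Lemma \ref{lem:w'-chain-map}), whose proof is exactly the disk-splitting argument you describe. Your explicit verification that $\Phi\circ w'=\beta$ on the $\rho_{ij}$ via commutativity of the outer square is a detail the paper leaves implicit, but the argument is the same.
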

\begin{proof}
Suppose we have another commutative diagram of DGAs as follows: \[
\xymatrix{I_{n}\ar[d]_{w}\ar[r] & D(K^{D})\ar[d]^{w'}\ar[ddr]^{g}\\
A(K^{A})\ar[r]\ar[rrd]_{f} & Ch(K)\ar@{-->}[dr]^{\varphi}\\
 &  & X}
\]
Then it is easy to construct the dotted morphism $\varphi:Ch(K)\to X$.
The algebra $Ch(K)$ is generated by vertices of the front diagram
$K$; if a vertex $v$ is on the left side of the dividing line, then
it is in the diagram $K^{A}$ and we let $\varphi(v)=f(v)$, and otherwise
it is in $K^{D}$ and we let $\varphi(v)=g(v)$. This is clearly well-defined
and makes the diagram commute, so if it is a chain map (i.e. a morphism
of DGAs) then $Ch(K)$ has the universal property of a pushout.

For $v\in A(K^{A})\subset Ch(K)$ we have $\partial(\varphi(v))=\partial(f(v))=f(\partial v)=\varphi(\partial v)$,
since $v\in A(K^{A})$ implies that $\partial v\in A(K^{A})\subset Ch(K)$
as well. On the other hand, for $v\in Ch(K)$ coming from the $K^{D}$
side of the diagram and $v_{D}$ the corresponding generator of $D(K^{D})$
we have $\partial(\varphi(v))=\partial(\varphi\circ w'(v_{D}))=\partial(g(v_{D}))=g(\partial v_{D})$
since $g$ is a chain map, and then $g(\partial v_{D})=\varphi(w'(\partial v_{D}))=\varphi(\partial(w'(v_{D})))=\varphi(\partial v)$
since $w'$ is also a chain map by Lemma \ref{lem:w'-chain-map} and
so $\partial(\varphi(v))=\varphi(\partial v)$ in this case as well.
Therefore $\varphi$ is a chain map, as desired.\end{proof}
\begin{rem}
The result $Ch(K)=A(K^{A})\coprod_{I_{n}}D(K^{D})$ of Theorem \ref{thm:pushout-square}
is a noncommutative analogue of the pairing theorem $CP^{-}(\mathcal{H})\cong CPA^{-}(\mathcal{H}^{A})\otimes_{\mathcal{A}_{N,k}}CPD^{-}(\mathcal{H}^{D})$
of \cite{Lipshitz:2008p652}; even the construction of $D(K^{D})$
as an algebra of the form $I_{n}\coprod\mathbb{F}\langle v_{i}\rangle$,
where the $v_{i}$ are the vertices of $K^{D}$, can be compared to
the definition $CPD^{-}(\mathcal{H}^{D})=\mathcal{A}_{N,k}\otimes_{I_{N,k}}\mathbb{A}\langle\mathfrak{S}(\mathcal{H}^{D})\rangle$.
Theorem \ref{thm:pushout-square} originated as an attempt to adapt
the pairing theorem for $CP^{-}$ to the Chekanov-Eliashberg algebra,
since both $Ch(K)$ and the non-invariant $CP^{-}$ are defined in
terms of embedded disks in the plane rather than in the torus of combinatorial
knot Floer homology.
\end{rem}

\subsection{Type DA algebras and the generalized van Kampen theorem}

\begin{figure}

\begin{centering}
\includegraphics{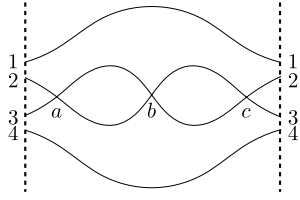}
\par\end{centering}

\caption{A Legendrian front diagram with two dividing lines.\label{fig:trefoil-da}}

\end{figure}

Suppose we want to divide a simple Legendrian front into multiple
pieces along vertical lines, as in the bordered front $K$ of Figure
\ref{fig:trefoil-da}. We can associate a so-called DGA of \emph{type
DA} to $K$ generalizing both the type A and type D algebras, and
the analogue of the pairing theorem will follow with minimal effort. 
\begin{defn}
The algebra $DA(K)$ is the DGA generated freely over $\mathbb{F}$
by the vertices of $K$ and the generators of the algebra $I_{n}$
corresponding to the left dividing line. The grading and differential
on $DA(K)$ are defined exactly as in the type D algebra.
\end{defn}
In Figure \ref{fig:trefoil-da}, for example, $DA(K)$ is generated
by $a,b,c$ and the elements $\rho_{ij}\in I_{4}$ with $1\leq i<j\leq4$.
The differential is given by $\partial a=\rho_{23}$, $\partial b=\partial c=0$,
and $\partial\rho_{ij}=\sum_{i<k<j}\rho_{ik}\rho_{kj}$.
\begin{lem}
The differential on $DA(K)$ has degree $-1$ and satisfies $\partial^{2}=0$.\end{lem}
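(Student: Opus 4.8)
The plan is to observe that $DA(K)$ is essentially a type D algebra in disguise, so that the statement follows almost verbatim from the work already done in Proposition \ref{pro:d-algebra-differential}. Concretely, if we choose the right-hand dividing line far enough to the right that it does not actually cut through any vertex --- equivalently, if we regard the portion of $K$ to the right of the left dividing line as a type D half-diagram $K^{D}$ with respect to that line and simply ignore the second line --- then the vertices of $K$ together with the generators $\rho_{ij}$ of the $I_{n}$ associated to the left line are exactly the generators of $D(K^{D})$, and the gradings and differential formulas are declared to be identical. Thus $DA(K)$ and $D(K^{D})$ are literally the same DGA, and the lemma is the content of Proposition \ref{pro:d-algebra-differential}.

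If instead one wants a proof that does not appeal to this identification --- for instance, because in the general DA setting the right dividing line really does cut through vertices and one wishes to treat the diagram on its own terms --- then I would repeat the two arguments of Proposition \ref{pro:d-algebra-differential} with no essential change. For the degree computation, one checks $|\partial H| = |v| - 1$ for each half-disk $H \in \mathrm{Half}_{D}(K;v)$ by tracking how the Maslov potential changes along $\partial H$: the corners $c_1,\dots,c_k$ between $v$ and $x_a$ account for $\mu(s_1) - \mu(x_a)$, the corners $c_1',\dots,c_l'$ between $x_b$ and $v$ account for $\mu(x_b) - \mu(s_2)$, and $|\rho_{ab}| = \mu(x_a) - \mu(x_b) - 1$; adding these gives $|v| = \sum|c_i| + |\rho_{ab}| + \sum|c_j'| + 1 = |\partial H| + 1$. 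Full disks in $\mathrm{Disk}(K;v)$ already have the correct degree shift exactly as for $Ch(K)$, and $\partial\rho_{ij} = \sum_{i<k<j}\rho_{ik}\rho_{kj}$ is degree $-1$ as in $I_n$, so $\partial$ has degree $-1$ on all generators and hence everywhere by the Leibniz rule.

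For $\partial^2 = 0$, I would again group the monomials of $\partial^2 v$ according to the region $R$ they come from --- a region with right cusp (or right crossing) $v$, two left cusps or left boundary arcs, and a distinguished point $c'$ where $R$ occupies three of four quadrants --- and pair them up by the two ways of extending a strand through $c'$ rightward until it meets $\partial R$ again. When $c'$ lies to the right of the left dividing line this is exactly the cancellation argument used for $Ch(K)$. When $c'$ lies to the left of the line, $R$ is a half-disk meeting the line at $x_i, x_j$; for any $k$ with $i < k < j$ the strand through $x_k$ must hit $\partial R$ (otherwise it would terminate at a right cusp interior to $R$, violating simplicity --- this is where the simplicity hypothesis of Definition \ref{def:simple-front} and the accompanying remark are used), splitting $R$ into two half-disks whose $\rho$-labels multiply to $\rho_{ik}\rho_{kj}$, and this matches the term obtained from differentiating a $\rho_{ij}$ inside $\partial v$. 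Every monomial of $\partial^2 v$ thus appears an even number of times, so it vanishes over $\mathbb{F} = \mathbb{Z}/2\mathbb{Z}$. I expect the degree computation to be entirely routine; the only point requiring care is the bookkeeping in the $\partial^2 = 0$ argument when $c'$ lies across the dividing line, but since this is handled identically in Proposition \ref{pro:d-algebra-differential} there is no genuine obstacle here.
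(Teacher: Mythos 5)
Your proposal is correct and matches the paper's approach: the paper's entire proof is ``repeat the proof of Proposition \ref{pro:d-algebra-differential} word for word, replacing $D(K^{D})$ with $DA(K)$ as needed,'' which is exactly what you do, since the right dividing line plays no role in the definition of the grading or differential on $DA(K)$. Your spelled-out version of the degree count and the cancellation argument (including the observation that simplicity rules out a right cusp in the interior of a region $R$) is a faithful reproduction of the earlier proof and is sound.
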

\begin{proof}
We repeat the proof of Proposition \ref{pro:d-algebra-differential}
word for word, replacing $D(K^{D})$ with $DA(K)$ as needed.
\end{proof}
Let $I_{m}'$ be the algebra corresponding to the right dividing line,
with generators denoted $\rho_{ij}'$. Then we can define the set
of half-disks $\mathrm{Half}_{DA}(K;i,j)$ almost as in Definition
\ref{def:half-disks-a}: the right boundary of a half-disk $H$ should
still be the segment between points $x_{i}'$ and $x_{j}'$ on the
right dividing line, but now the left boundary is allowed to be a
segment on the left dividing line connecting some points $x_{k}$
and $x_{l}$, in which case the monomial $\partial H$ contains the
generator $\rho_{kl}$ in the appropriate place.
\begin{defn}
Define an algebra homomorphism $w:I_{m}'\to DA(K)$ by the formula
\[
w(\rho_{ij}')=\sum_{H\in\mathrm{Half}_{DA}(K;i,j)}\partial H.\]

\end{defn}
For example, the map $w:I_{4}'\to DA(K)$ in Figure \ref{fig:trefoil-da}
is given by:

\begin{center}
\begin{tabular}{ll}
$w(\rho_{12}')=\rho_{12}(abc+a+c)+\rho_{13}(bc+1)$ & $w(\rho_{13}')=\rho_{12}(ab+1)+\rho_{13}b$\tabularnewline
$w(\rho_{14}')=\rho_{14}$ & $w(\rho_{23}')=0$\tabularnewline
$w(\rho_{34}')=(cb+1)\rho_{24}+(cba+c+a)\rho_{34}$ & $w(\rho_{24}')=b\rho_{24}+(ba+1)\rho_{34}$\tabularnewline
\end{tabular}
\par\end{center}
\begin{prop}
The map $w:I_{m}'\to DA(K)$ is a morphism of DGAs.\end{prop}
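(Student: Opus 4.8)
The plan is to follow the template established in Proposition \ref{pro:w-chain-map} and Lemma \ref{lem:w'-chain-map}, adapted to the setting of two dividing lines. There are two things to verify: that $w$ preserves gradings, and that $w$ is a chain map, i.e. $w(\partial\rho'_{ij})=\partial(w(\rho'_{ij}))$ for all $i<j$. The grading statement follows verbatim from the argument of Lemma \ref{lem:w-degree-zero}, with one bookkeeping change: as we traverse the boundary of a half-disk $H\in\mathrm{Half}_{DA}(K;i,j)$ from $x'_i$ to its leftmost feature and back to $x'_j$, that leftmost feature is now either a left cusp (exactly as before) or a segment $[x_k,x_l]$ on the left dividing line. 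In the cusp case the potential drop across the cusp is $+1$ and the computation is unchanged. In the segment case, the potential difference $\mu(x_k)-\mu(x_l)$ contributes, and since $|\rho_{kl}|=\mu(x_k)-\mu(x_l)-1$ appears in $\partial H$, the same $+1$ is accounted for; summing the corner gradings on the two arcs together with $|\rho_{kl}|$ yields $\mu(x'_i)-\mu(x'_j)-1=|\rho'_{ij}|$, as desired.

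For the chain map property, I would argue as in Proposition \ref{pro:w-chain-map}: the monomials of $\partial(w(\rho'_{ij}))$ correspond to regions $R$ obtained by gluing a half-disk $H\in\mathrm{Half}_{DA}(K;i,j)$ to a full disk (or a left half-disk) along a corner $c$ of $\partial H$, where the differential is applied at $c$. Such a region $R$ has right boundary the segment $[x'_i,x'_j]$, possibly a left boundary segment on the left dividing line, and a distinguished vertex $c'$ between its two leftmost features where $R$ occupies three of four quadrants (or, in the case where $c'$ would lie to the left of the left dividing line, the corresponding degenerate configuration handled as in the proof of Proposition \ref{pro:d-algebra-differential}). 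Extending either of the two strands through $c'$ rightward until it meets $\partial R$ again splits $R$ in two distinct ways. If the splitting path terminates on the left dividing line, it contributes the corresponding term to the differential $\partial\rho_{kl}$ of a generator of $I_n$ appearing in some monomial of $w(\rho'_{ij})$ — i.e. a monomial of $\partial(w(\rho'_{ij}))$ via the Leibniz rule applied to a $\rho_{kl}$ factor. If instead the path terminates on the \emph{right} dividing line at a point $x'_k$ with $i<k<j$, the splitting exhibits $R$ as a union of two $\mathrm{Half}_{DA}$ half-disks and contributes the corresponding monomial to $w(\rho'_{ik})w(\rho'_{kj})$. Thus every monomial of $\partial(w(\rho'_{ij}))+\sum_{i<k<j}w(\rho'_{ik})w(\rho'_{kj})$ is matched with exactly one other in an equal pair, and the sum vanishes over $\mathbb{F}$; since $\partial\rho'_{ij}=\sum_{i<k<j}\rho'_{ik}\rho'_{kj}$, this is precisely $w(\partial\rho'_{ij})=\partial(w(\rho'_{ij}))$.

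The main obstacle is purely a matter of case analysis rather than genuine difficulty: one must check that in \emph{every} configuration — $c'$ to the right of the left line, $c'$ effectively to the left of it, the splitting path ending on the left line, on the right line, or at an ordinary point of $\partial R$ in the interior strip — the two splittings genuinely exist and genuinely produce the same monomial, and that no monomial is produced an odd number of times. Each individual case is an instance of an argument already carried out in the proof of Proposition \ref{pro:w-chain-map} or Proposition \ref{pro:d-algebra-differential}; the only new feature is that a single region $R$ may now simultaneously carry a $\rho_{kl}$ from the left line and contribute to the $w(\rho'_{ik})w(\rho'_{kj})$ product, so one must confirm these roles do not interfere. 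Having checked this, together with the grading statement we conclude that $w$ is a morphism of DGAs.
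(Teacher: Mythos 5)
Your proposal is correct and follows exactly the route the paper takes: the paper's own proof is a one-line reference to the arguments of Lemma \ref{lem:w-degree-zero}, Proposition \ref{pro:w-chain-map}, and Proposition \ref{pro:d-algebra-differential}, and you have simply written out those adaptations (the $\rho_{kl}$ contribution to the grading count, and the extra case where the splitting path or the three-quadrant vertex interacts with the left dividing line) in full detail.
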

\begin{proof}
See the proofs of Lemma \ref{lem:w-degree-zero} and Proposition \ref{pro:w-chain-map},
with some minor changes as in the proof of Proposition \ref{pro:d-algebra-differential}
to account for the differentials of each $\rho_{kl}$ that might appear
in $w(\rho_{ij}')$.
\end{proof}
The type DA algebra generalizes both the type D algebra, by incorporating
the algebra $I_{n}$ of the left dividing line into the DGA structure,
and the type A algebra, by admitting an analogous morphism from $I_{m}'$
for the right dividing line. In fact, both the type A and type D algebras
are special cases of this, with $n=0$ and $m=0$ respectively.

\begin{figure}
\begin{centering}
\includegraphics{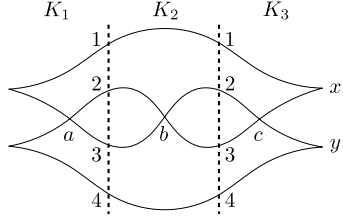}
\par\end{centering}

\caption{A trefoil diagram split into three regions by a pair of dividing lines.\label{fig:trefoil-a-da-d}}

\end{figure}

We can use this more general structure to relate overlapping pieces
of a simple Legendrian front. Consider three regions $K_{1}$, $K_{2}$,
and $K_{3}$ of a simple front as in Figure \ref{fig:trefoil-a-da-d},
and let $K_{12}$, $K_{23}$ and $K_{123}$ denote the larger regions
$K_{1}\cup K_{2}$, $K_{2}\cup K_{3}$, and $K_{1}\cup K_{2}\cup K_{3}$
respectively. Then the map $w:DA(K_{2})\to DA(K_{12})$ which preserves
the vertices of $K_{2}$ and sends $\rho_{ij}$ to the appropriate
element $w(\rho_{ij})\in DA(K_{1})\subset DA(K_{12})$ is a chain
map, as are the inclusion $DA(K_{2})\hookrightarrow DA(K_{23})$ and
the map $w':DA(K_{23})\to DA(K_{123})$. The proofs of these facts
proceed exactly as expected, as does the following theorem.
\begin{thm}
\label{thm:generalized-pairing}The commutative diagram \[
\xymatrix{DA(K_{2})\ar[r]\ar[d]_{w} & DA(K_{23})\ar[d]^{w'}\\
DA(K_{12})\ar[r] & DA(K_{123})}
\]
is a pushout square in the category of DGAs.
\end{thm}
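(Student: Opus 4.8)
The plan is to mimic the proof of Theorem~\ref{thm:pushout-square} essentially verbatim, using the fact that a type DA algebra is built out of a set of free generators (the vertices of the region together with the $\rho_{ij}$ coming from its \emph{left} dividing line) exactly as $D(K^D)$ was. First I would observe that every generator of $DA(K_{123})$ lies in exactly one of the two ``interior'' pieces $DA(K_{12})$ or $DA(K_{23})$: the vertices of $K_{123}$ are partitioned by which of $K_1\cup K_2$ or $K_3$ they lie in (those in $K_2$ belong to the overlap), and the $\rho_{ij}$ attached to the \emph{leftmost} dividing line of $K_{123}$ are, by definition, exactly the generators $I_n$ contributes to $DA(K_{12})$, which in turn map in via the inclusion. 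So there is a tautological candidate map: given any competing cocone $f\colon DA(K_{12})\to X$ and $g\colon DA(K_{23})\to X$ with $f\circ w = g$ after restriction to $DA(K_2)$, define $\varphi\colon DA(K_{123})\to X$ on a generator $v$ by $\varphi(v)=f(v)$ if $v$ comes from the $K_{12}$ side and $\varphi(v)=g(v)$ if $v$ comes from the $K_3$ side, with $\varphi(\rho_{ij})=f(\rho_{ij})$ for the leftmost $\rho$-generators. Commutativity of the competing diagram on the overlap $DA(K_2)$ makes this well-defined; uniqueness is immediate since the generators are forced.

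The remaining content, as before, is checking that $\varphi$ is a chain map, and this reduces to checking it on generators. For a generator $v$ coming purely from the $K_3$-part of $K_{123}$ one argues as in the $D(K^D)$ case of Theorem~\ref{thm:pushout-square}: write $v=w'(v_{K_{23}})$ for the corresponding generator $v_{K_{23}}$ of $DA(K_{23})$, use that $g$ and $w'$ are chain maps to get $\partial(\varphi(v))=g(\partial v_{K_{23}})=\varphi(w'(\partial v_{K_{23}}))=\varphi(\partial v)$. For a generator $v$ coming from $K_{12}$ (a vertex of $K_1\cup K_2$, or one of the leftmost $\rho_{ij}$), one uses instead that $f$ and the inclusion $DA(K_{12})\hookrightarrow DA(K_{123})$ behave compatibly: $\partial v$, computed in $DA(K_{123})$, already lies in the subalgebra image of $DA(K_{12})$ — this is precisely the ``simple'' hypothesis, which guarantees that no disk or half-disk with rightmost vertex in $K_1\cup K_2$ can reach across the right dividing line of $K_{12}$ — so $\partial(\varphi(v))=\partial(f(v))=f(\partial v)=\varphi(\partial v)$.

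The one genuinely new bookkeeping point — and the step I expect to be the main (mild) obstacle — is verifying that the square \emph{commutes} in the first place, i.e.\ that the inclusion $DA(K_2)\hookrightarrow DA(K_{23})$ followed by $w'\colon DA(K_{23})\to DA(K_{123})$ agrees with $w\colon DA(K_2)\to DA(K_{12})$ followed by the inclusion $DA(K_{12})\hookrightarrow DA(K_{123})$. On vertices of $K_2$ both composites are the identity inclusion, so the issue is only the generators $\rho_{ij}$ of the middle dividing line (the one separating $K_1$ from $K_2$): going down-then-right sends $\rho_{ij}\mapsto w(\rho_{ij})\in DA(K_1)\subset DA(K_{12})\subset DA(K_{123})$, a sum of monomials $\partial H$ over left half-disks $H$ of $K_1$ relative to that line, while going right-then-down sends $\rho_{ij}$ first into $DA(K_{23})$ (where it is a free generator with $\partial\rho_{ij}=\sum\rho_{ik}\rho_{kj}$) and then applies $w'$, which by definition of $w'$ on the middle-line generators again produces $\sum_{H}\partial H$ over the same set of half-disks in $K_1$. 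These agree term by term, so the square commutes. Once commutativity is in hand, and since the three maps $w$, $w'$, and the two inclusions are chain maps of degree zero (proved ``word for word'' as cited before the theorem statement), the universal property verification above completes the proof; concretely one finishes with the same dotted-arrow diagram as in Theorem~\ref{thm:pushout-square}, reading $DA(K_2),DA(K_{12}),DA(K_{23}),DA(K_{123})$ in place of $I_n,A(K^A),D(K^D),Ch(K)$.
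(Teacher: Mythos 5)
Your proposal is correct and follows essentially the same route as the paper, which simply asserts that the proof of Theorem \ref{thm:pushout-square} carries over verbatim to this setting --- exactly the adaptation you have written out (well-definedness and uniqueness of $\varphi$ on generators, the chain-map check case by case, and commutativity of the square via the half-disks of $K_1$). One minor point: the fact that $\partial v$ stays inside the image of $DA(K_{12})$ for a generator $v$ of $DA(K_{12})$ follows from admissible disks having their unique rightmost point at $v$ (so they cannot extend past $v$ at all), rather than from the simplicity hypothesis per se, but this does not affect the argument.
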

In the special case where $K_{2}$ is a product cobordism, so both
dividing lines have the same number of points and each strand in $K_{2}$
connects $x_{i}$ to $x_{i}'$ without any crossings or cusps, then
the inclusion $I_{n}\hookrightarrow DA(K_{2})$ of the left dividing
line of $K_{2}$ is an isomorphism and so is the chain map $w:I_{n}'\to DA(K_{2})$
coming from the right dividing line (i.e. $w(\rho_{ij}')=\rho_{ij}$).
If furthermore the regions $K_{1}$ and $K_{3}$ have no left and
right dividing lines, respectively, so that $DA(K_{1})=A(K_{1})$
and $DA(K_{3})=D(K_{3})$, then $DA(K_{123})=Ch(K)$ and Theorem \ref{thm:generalized-pairing}
reduces to the statement of Theorem \ref{thm:pushout-square}.

\section{Augmentations}

Since it can be hard to distinguish between Legendrian knots given
only a presentation of their algebras, Chekanov introduced the notion
of linearization.
\begin{defn}
An \emph{augmentation} of a DGA is a morphism $\epsilon:A\to\mathbb{F}$,
where $\mathbb{F}$ is concentrated in degree $0$ and has vanishing
differential. In particular we require $\epsilon\circ\partial=0$,
$\epsilon(1)=1$, and $\epsilon(x)=0$ for any element $x$ of pure
nonzero degree.
\end{defn}
Given an augmentation $\epsilon$ of the algebra $A$ freely generated
by a finite set of elements $\{v_{i}\}$, the differential on $A$
turns the $\mathbb{F}$-vector space $A^{\epsilon}=\ker(\epsilon)/(\ker(\epsilon))^{2}$
with basis $\{v_{i}-\epsilon(v_{i})\}$ into a chain complex. We can
then compute the associated Poincar\'{e} polynomial $P_{\epsilon}(t)=\sum_{\lambda\in\Gamma}\dim(H_{\lambda}(A^{\epsilon}))t^{\lambda}$.
\begin{thm}[{\cite[Theorem 5.2]{Chekanov:2002p539}}]
 The set of \emph{Chekanov polynomials} $\{P_{\epsilon}(t)\mid\epsilon\mbox{ an augmentation of }Ch(K)\}$
is invariant under stable tame isomorphisms of $Ch(K)$ and is therefore
a Legendrian isotopy invariant.
\end{thm}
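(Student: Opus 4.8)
The plan is to check invariance separately for each of the three elementary moves that generate the stable tame isomorphism relation---tame isomorphisms, stabilizations, and destabilizations---and then to combine this with the theorem of Chekanov and Ng, quoted above, that the stable tame isomorphism type of $Ch(K)$ is a Legendrian isotopy invariant. For each move it suffices to show that the \emph{set} of Chekanov polynomials is unchanged; the key observation throughout is that morphisms $\mathcal{A}\to\mathbb{F}$ compose and restrict along DGA maps in the obvious way, and that a tame isomorphism or (de)stabilization interacts very simply with the linearized complex $\mathcal{A}^\epsilon=\ker(\epsilon)/(\ker\epsilon)^2$.

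First I would treat a tame isomorphism $\phi\colon\mathcal{A}\to\mathcal{A}'$. Since $\phi$ is an isomorphism of DGAs, the assignment $\epsilon'\mapsto\epsilon'\circ\phi$ is a bijection from augmentations of $\mathcal{A}'$ to augmentations of $\mathcal{A}$, with inverse induced by $\phi^{-1}$. Given a corresponding pair $\epsilon=\epsilon'\circ\phi$, the map $\phi$ carries $\ker(\epsilon)$ onto $\ker(\epsilon')$ and $(\ker\epsilon)^2$ onto $(\ker\epsilon')^2$, so it descends to a graded vector space isomorphism $\mathcal{A}^\epsilon\to(\mathcal{A}')^{\epsilon'}$; because $\phi$ is a grading-preserving chain map this is an isomorphism of chain complexes, hence $P_\epsilon(t)=P_{\epsilon'}(t)$. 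Letting $\epsilon'$ range over all augmentations of $\mathcal{A}'$ then yields equality of the two sets of Chekanov polynomials.

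Next I would treat a stabilization: let $\mathcal{A}'$ be obtained from $\mathcal{A}$ by adjoining generators $a,b$ with $|a|=k+1$, $|b|=k$, $\partial a=b$, $\partial b=0$, leaving the differential on the old generators untouched. Restriction of augmentations gives a map from augmentations of $\mathcal{A}'$ to augmentations of $\mathcal{A}$; it is surjective, since any augmentation $\epsilon$ of $\mathcal{A}$ extends by setting $\epsilon(a)=\epsilon(b)=0$, while any augmentation $\epsilon'$ of $\mathcal{A}'$ automatically satisfies $\epsilon'(b)=\epsilon'(\partial a)=0$. For a pair $\epsilon=\epsilon'|_{\mathcal{A}}$, the differential on the generators $v_i-\epsilon(v_i)$ of $\mathcal{A}^\epsilon$ never involves $a$ or $b$, so as chain complexes $(\mathcal{A}')^{\epsilon'}=\mathcal{A}^\epsilon\oplus\langle a-\epsilon'(a),\,b\rangle$, where the second summand has linearized differential sending $a-\epsilon'(a)$ to $b$ and is therefore acyclic; hence $P_{\epsilon'}(t)=P_\epsilon(t)$. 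Combined with surjectivity of restriction this shows the set of Chekanov polynomials is preserved by stabilization, and reading the same argument backwards handles destabilization.

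Finally, chaining the three cases shows that stable tame isomorphic DGAs have the same set of Chekanov polynomials, and Legendrian invariance then follows from the Chekanov--Ng theorem. I expect the main point requiring care to be the stabilization step---specifically, justifying that $(\mathcal{A}')^{\epsilon'}$ splits off the acyclic summand $\langle a-\epsilon'(a),\,b\rangle$, which relies on stabilization not altering the differentials of the original generators, and keeping track of the degenerate case $k+1\equiv 0$ in which $\epsilon'(a)$ need not vanish, noting that this ambiguity does not change the resulting Poincar\'e polynomial.
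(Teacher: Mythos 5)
The paper does not prove this statement; it is quoted directly from Chekanov \cite[Theorem 5.2]{Chekanov:2002p539}, so there is no in-paper argument to compare against. Your proposal is a correct and complete reconstruction of the standard proof: the bijection of augmentations under pullback by a tame isomorphism together with the induced isomorphism of the complexes $\ker(\epsilon)/(\ker\epsilon)^2$, and the splitting off of the acyclic two-dimensional summand $\langle a-\epsilon'(a),\,b\rangle$ under (de)stabilization, are exactly the two points that need checking, and you handle both (including the only delicate issue, namely that restriction of augmentations under stabilization need only be surjective, and that $\epsilon'(a)$ may be nonzero when $|a|=0$ without affecting the Poincar\'{e} polynomial). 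The only cosmetic difference from Chekanov's original treatment is that he phrases linearization via the conjugated differential $\Phi^{\epsilon}\circ\partial\circ(\Phi^{\epsilon})^{-1}$ and its degree-one part rather than the quotient $\ker(\epsilon)/(\ker\epsilon)^2$; the two formulations agree, and the latter is the one this paper adopts.
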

It is possible for $Ch(K)$ to have multiple augmentations giving
the same polynomial $P_{\epsilon}(t)$. Melvin and Shrestha \cite{Melvin:2005p636}
constructed prime Legendrian knots with arbitrarily many Chekanov
polynomials and also showed that every Laurent polynomial of the form
$P(t)=t+p(t)+p(t^{-1})$ (i.e. those satisfying Sabloff's duality
theorem \cite{Sabloff:2006p526}), with $p(t)$ a polynomial with
positive integer coefficients, is a Chekanov polynomial of some knot.
On the other hand, not every Legendrian knot even admits a single
augmentation; the existence of augmentations is known to be equivalent
to the existence of a normal ruling \cite{Fuchs:2003p674,Fuchs:2004p673,Sabloff:2005p671},
which implies, for example, that $K$ must have rotation number $0$
\cite[Theorem 1.3]{Sabloff:2005p671}.

\subsection{A Mayer-Vietoris sequence for linearized homology}

Suppose that the simple Legendrian front $K$ is divided by a vertical
line into left and right halves $K^{A}$ and $K^{D}$. By Theorem
\ref{thm:pushout-square}, an augmentation $\epsilon$ of $Ch(K)$
is equivalent to a commutative diagram \[
\xymatrix{I_{n}\ar[r]\ar[d]_{w} & D(K^{D})\ar[d]^{\epsilon_{D}}\\
A(K^{A})\ar[r]^{\epsilon_{A}} & \mathbb{F}}
\]
of DGAs, in which case $\epsilon_{A}$ and $\epsilon_{D}$ both factor
through $Ch(K)$. We can associate a Mayer-Vietoris sequence to the
associated linearizations, denoted $I^{\epsilon}$, $A^{\epsilon}$,
$D^{\epsilon}$, and $Ch^{\epsilon}$.
\begin{thm}
\label{thm:mayer-vietoris}There is a long exact sequence \[
\dots\to H_{k}(I^{\epsilon})\to H_{k}(A^{\epsilon})\oplus H_{k}(D^{\epsilon})\to H_{k}(Ch^{\epsilon})\to H_{k-1}(I^{\epsilon})\to\dots\]
of linearized homology groups.\end{thm}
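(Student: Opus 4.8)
The plan is to extract a short exact sequence of chain complexes from the pushout structure and apply the standard zig-zag lemma. Since all algebras here are semi-free, the linearization $A^\epsilon$ of a DGA $A$ with augmentation $\epsilon$ is simply the $\mathbb{F}$-vector space spanned by the generators, equipped with the degree-$(-1)$ differential obtained by conjugating $\partial$ by the elementary tame isomorphism $v_i \mapsto v_i + \epsilon(v_i)$ and then projecting onto the linear part. By Theorem~\ref{thm:pushout-square}, $Ch(K) = A(K^A) \coprod_{I_n} D(K^D)$, so the generating set of $Ch(K)$ is (the union of the vertices of $K^A$ and of $K^D$; equivalently) the pushout of the generating sets at the level of free $\mathbb{F}$-modules: the generators of $Ch(K)$ are the generators of $A(K^A)$ together with those of $D(K^D)$ other than the $\rho_{ij}$, and the map $w'$ identifies each $\rho_{ij} \in I_n \subset D(K^D)$ with its image $w(\rho_{ij}) \in A(K^A)$. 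Hence, writing $V_I$, $V_A$, $V_D$, $V_{Ch}$ for the linearized complexes, there is a short exact sequence of graded $\mathbb{F}$-vector spaces
\[
0 \to I^\epsilon \xrightarrow{(w_*,\,-\iota_*)} A^\epsilon \oplus D^\epsilon \xrightarrow{\iota'_* + w'_*} Ch^\epsilon \to 0,
\]
where all four maps are the linearizations of the morphisms in the pushout square (which are defined because $\epsilon_A, \epsilon_D$ are the composites of $\epsilon$ with $\iota', w'$, so everything is compatible with augmentations).

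First I would verify exactness of this sequence of vector spaces in each degree. Injectivity of $I^\epsilon \to A^\epsilon \oplus D^\epsilon$ is immediate since $I_n \hookrightarrow D(K^D)$ is an inclusion of part of a free generating set, so the component $-\iota_*$ is already injective. Surjectivity onto $Ch^\epsilon$ holds because every generator of $Ch(K)$ is a vertex either of $K^A$ or of $K^D$, hence lies in the image of $\iota'_*$ or of $w'_*$. Exactness in the middle is precisely the statement that the generators of $Ch(K)$ are the amalgamated union of the generators of $A(K^A)$ and $D(K^D)$ over the generators $\rho_{ij}$ of $I_n$: an element of $A^\epsilon \oplus D^\epsilon$ killed in $Ch^\epsilon$ must have its $A$-component and the non-$\rho$ part of its $D$-component vanish, so it is supported on $\{\rho_{ij}\}$ and there has the form $(w(\xi), -\xi)$ for $\xi \in I^\epsilon$. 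This is a direct unwinding of the pushout description and requires no disk-counting.

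Next I would check that the three maps $w$, $\iota'$, $w'$ (and the inclusion $I_n \hookrightarrow D(K^D)$) are chain maps of degree zero — but this is already done: $w$ by Proposition~\ref{pro:w-chain-map}, $I_n \hookrightarrow D(K^D)$ trivially, $w'$ by Lemma~\ref{lem:w'-chain-map}, and $\iota' : A(K^A) \hookrightarrow Ch(K)$ because $A(K^A)$ is a sub-DGA. The only remaining point is that linearizing a DGA morphism compatible with the augmentations yields a chain map of linearized complexes, which is the standard observation underlying Chekanov's invariance theorem: conjugating by the tame isomorphisms $v \mapsto v + \epsilon(v)$ intertwines the differentials, and a morphism sending augmentation to augmentation descends to the quotients $\ker\epsilon / (\ker\epsilon)^2$. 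Granting this, the short exact sequence of complexes above is a short exact sequence of chain complexes, and the associated long exact sequence in homology is exactly the asserted Mayer–Vietoris sequence, with connecting homomorphism $H_k(Ch^\epsilon) \to H_{k-1}(I^\epsilon)$.

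The main obstacle — really the only substantive point — is the bookkeeping in establishing exactness in the middle term, i.e. making precise the claim that the pushout of DGAs induces a pushout (equivalently, since $I^\epsilon \to D^\epsilon$ is injective, a short exact sequence) on linearized generating modules. One must be slightly careful that $w'$ is genuinely injective on the span of the $D(K^D)$-vertices modulo the relation identifying $\rho_{ij}$ with $w(\rho_{ij})$, and that no collapsing occurs among the vertices themselves; this follows because distinct vertices of $K^A$ and $K^D$ remain distinct generators of $Ch(K)$. Once that is pinned down, everything else is the formal zig-zag lemma, and no further geometry enters.
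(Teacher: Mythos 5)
Your proposal is correct and follows essentially the same route as the paper: the same short exact sequence $0\to I^{\epsilon}\to A^{\epsilon}\oplus D^{\epsilon}\to Ch^{\epsilon}\to 0$ (with $f(x)=(-w(x),x)$ and $g(x,y)=x+w'(y)$, signs being immaterial over $\mathbb{F}$), the same verification of exactness using the fact that the $K^{D}$-vertex part of $y$ cannot be cancelled by anything in $A^{\epsilon}$, and the zig-zag lemma. One wording slip: in the middle-exactness step the $A$-component of a kernel element need not vanish (only the non-$\rho$ part of the $D$-component does); it equals $-w(\xi)$, as your final expression $(w(\xi),-\xi)$ correctly records.
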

\begin{proof}
It suffices to show that the sequence \[
0\to I^{\epsilon}\stackrel{f}{\to}A^{\epsilon}\oplus D^{\epsilon}\stackrel{g}{\to}Ch^{\epsilon}\to0\]
of chain complexes is exact, where $f(x)=(-w(x),x)$ and $g(x,y)=x+w'(y)$.
Here we abuse notation and let $w$ and $w'$ refer to the linearized maps
$I^{\epsilon}\to A^{\epsilon}$ and $D^{\epsilon}\to Ch^{\epsilon}$ induced by
$w:I_{n}\to A(K^{A})$ and $w':D(K^{D})\to Ch(K)$.

Clearly $f$ is injective, since $I^{\epsilon}\to D^{\epsilon}$ is
an inclusion map, and $g$ is surjective since any generator $v-\epsilon(v)$
of $Ch^{\epsilon}$ is the image of either $(v-\epsilon(v),0)$ or
$(0,v-\epsilon(v))$ depending on whether $v$ is a vertex in $K^{A}$
or $K^{D}$.

To see that $\mathrm{im}(f)\subset\ker(g)$, or equivalently that
$g\circ f=0$, consider a generator $\rho_{ij}-\epsilon(\rho_{ij})$
of $I^{\epsilon}$. We can compute $f(\rho)=(-w(\rho_{ij})+\epsilon(\rho_{ij}),\rho_{ij}-\epsilon(\rho_{ij}))$,
and so \[
g\circ f(\rho_{ij}-\epsilon(\rho_{ij}))=(-w(\rho_{ij})+\epsilon(\rho_{ij}))+(w'(\rho_{ij})-\epsilon(\rho_{ij}))=0.\]
Conversely, given $(x,y)\in\ker(g)$, we have $x+w'(y)=0$ in $Ch^{\epsilon}$.
If we write $y$ as a sum of generators $\rho_{ij}-\epsilon(\rho_{ij})$
and $v_{k}-\epsilon(v_{k})$ of $D^{\epsilon}$, where the $v_{k}$
are vertices of $K^{D}$, then $y$ must not include any of the latter
terms since they cannot be eliminated by any $x$ in the subcomplex
$A^{\epsilon}\subset Ch^{\epsilon}$. But then $y\in D^{\epsilon}$
is the image of some element $\rho\in I^{\epsilon}$ under the inclusion
$I^{\epsilon}\hookrightarrow D^{\epsilon}$, and since $x+w'(y)=0$
we have $x=-w(\rho)$, hence $(x,y)=f(\rho)\subset\mathrm{im}(f)$.\end{proof}
\begin{rem}
Given a pushout $DA(K_{123})=DA(K_{12})\coprod_{DA(K_{2})}DA(K_{23})$
as in Theorem \ref{thm:generalized-pairing} and augmentations
$\epsilon_{12}$ and $\epsilon_{23}$ which agree on $DA(K_{2})$, we get
another augmentation
$\epsilon:DA(K_{123})\to\mathbb{F}$; then an identical argument gives
an analogous long exact sequence \[
\dots\to H_{k}(K_{2}^{\epsilon})\to H_{k}(K_{12}^{\epsilon})\oplus H_{k}(K_{23}^{\epsilon})\to H_{k}(K_{123}^{\epsilon})\to H_{k-1}(K_{2}^{\epsilon})\to\dots.\]

\end{rem}

\subsection{Connected sums}

In some simple cases we can use the long exact sequence to explicitly
work out the linearizations of some type A and type D algebras, reproving
a result about the homology of connected sums which appeared in \cite{Chekanov:2002p539,Melvin:2005p636}.

\begin{figure}
\centering{}\includegraphics{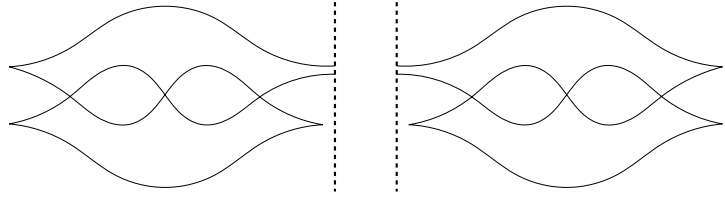}\caption{Diagrams $K^{A}$ and $K^{D}$ created by removing a single right
or left cusp from a front $K$.\label{fig:connect-sum}}

\end{figure}

\begin{example}
\label{exa:connect-sum-A}Let $K^{A}$ be the left half of a diagram
constructed by removing a single right cusp $x$ from $K$ as in the
left side of Figure \ref{fig:connect-sum}. Let $\rho$ be the generator
of $I_{2}$; then the map $w:I_{2}\to A(K^{A})$ sends $\rho$ to
$\partial x-1$. An augmentation $\epsilon$ of $Ch(K)$ then immediately
gives an augmentation $\epsilon_{A}$ of $A(K^{A})$, and since $\epsilon(\partial x)=0$
we must have $\epsilon_{A}(w(\rho))=1$.

The algebra $D$ corresponding to the right half of $K$ is generated
by the right cusp $x$ and the generator $\rho\in I_{2}$, with $|x|=1$,
$|\rho|=0$, $\partial x=\rho+1$, and $\partial\rho=0$. An augmentation
$\epsilon_{D}$ of $D$ must satisfy $\epsilon_{D}(\rho)=1$ since
$\epsilon_{D}(\partial x)=0$, so the linearization $D^{\epsilon}$
is generated by $x$ and $\rho+1$ with $\partial x=\rho+1$ and thus
its homology is zero. On the other hand, the corresponding augmentation
of $I_{2}$ has homology $\langle\rho+1\rangle\cong\mathbb{F}$ in
degree zero. Now by Theorem \ref{thm:mayer-vietoris}, the exact sequence
\[
H_{k}(I_{2}^{\epsilon})\to H_{k}(A^{\epsilon})\oplus H_{k}(D^{\epsilon})\to H_{k}(Ch(K)^{\epsilon})\to H_{k-1}(I_{2}^{\epsilon})\]
gives an isomorphism $H_{k}(Ch(K))\cong H_{k}(A^{\epsilon})\oplus H_{k}(D^{\epsilon})\cong H_{k}(A^{\epsilon})$
when $k\not=0,1$. Otherwise we have an exact sequence \[
0\to H_{1}(A^{\epsilon})\to H_{1}(Ch(K)^{\epsilon})\to\langle\rho+1\rangle\stackrel{w_{*}}{\to}H_{0}(A^{\epsilon})\to H_{0}(Ch(K)^{\epsilon})\to0.\]

Sabloff proved in \cite[Section 5]{Sabloff:2006p526} that $Ch(K)^{\epsilon}$
has a fundamental class $[\kappa]$ which is nonzero in $H_{1}(Ch(K)^{\epsilon})$,
where $\kappa=\sum_{v\in V}v$ for some subset $V$ of the vertices
of $K$ which includes all of the right cusps; in particular $x\in V$.
But then $\partial\kappa=0$ implies $\sum_{v\in V}\partial v=0$,
and so \[
w(\rho+1)=\partial x=\sum_{v\in V\backslash\{x\}}\partial v=\partial\left(\sum_{v\in V\backslash\{x\}}v\right).\]
The right hand side is well-defined and trivial in $H_{0}(A^{\epsilon})$
since every vertex of $V\backslash\{x\}$ is in $K^{A}$, so $w_{*}[\rho+1]=0$.
But applying this to the exact sequence above gives $H_{0}(A^{\epsilon})\cong H_{0}(Ch(K)^{\epsilon})$
and $H_{1}(Ch(K)^{\epsilon})\cong H_{1}(A^{\epsilon})\oplus\mathbb{F}$,
so \[
P_{\epsilon_{A}}^{K^{A}}(t)=P_{\epsilon}^{K}(t)-t.\]

\end{example}
We note here that removing a right cusp has changed the linearized
homology by removing the fundamental class, just as removing a point
from a manifold will eliminate its fundamental class. We speculate
in general that given a half-diagram, one might be able to define
an appropriate notion of compactly supported homology which does not
count disks approaching the dividing line, and use this to recover
a notion of Poincar\'{e} duality analogous to $H_{k}(M^{n})\cong H_{c}^{n-k}(M^{n})$.
\begin{example}
\label{exa:connect-sum-D}Let $K^{D}$ be constructed by removing
a single left cusp from $K$ as in the right side of Figure \ref{fig:connect-sum}.
Then $D(K^{D})$ is generated by the vertices of $Ch(K)$ plus the
generator $\rho$ of $I_{2}$, and an augmentation $\epsilon$ of
$Ch(K)$ gives an augmentation of $D(K^{D})$ with $\epsilon_{D}(\rho)=1$.
The algebra $A$ corresponding to the left half of $K$ has no generators,
since there is only a left cusp, and the map $w:I_{2}\to A$ is given
by $w(\rho)=1$. As in Example \ref{exa:connect-sum-A}, the long
exact sequence on homology gives $H_{k}(Ch(K)^{\epsilon})\cong H_{k}(D^{\epsilon})$
for $k\not=0,1$, and then we have an exact sequence \[
0\to H_{1}(D^{\epsilon})\to H_{1}(Ch(K)^{\epsilon})\to\langle\rho+1\rangle\stackrel{i_{*}}{\to}H_{0}(D^{\epsilon})\to H_{0}(Ch(K)^{\epsilon})\to0\]
where $i:I_{2}^{\epsilon}\to D^{\epsilon}$ is the inclusion map.
Therefore \[
P_{\epsilon_{D}}^{K^{D}}(t)=\begin{cases}
P_{\epsilon}^{K}(t)-t, & i_{*}[\rho+1]=0\\
P_{\epsilon}^{K}(t)+1, & i_{*}[\rho+1]\not=0.\end{cases}\]
We conjecture that only the first case occurs, as in Example \ref{exa:connect-sum-A}.\end{example}
\begin{prop}
Let $\epsilon_{1}$ and $\epsilon_{2}$ be augmentations of knots
$K_{1}$ and $K_{2}$. Then their connected sum $K=K_{1}\#K_{2}$,
formed by removing a right cusp from $K_{1}$ and a left cusp from
$K_{2}$ and gluing them together as in Figure \ref{fig:connect-sum},
has a canonical augmentation $\epsilon$ with Chekanov polynomial
$P_{\epsilon}^{K}(t)=P_{\epsilon_{1}}^{K_{1}}(t)+P_{\epsilon_{2}}^{K_{2}}(t)-t$.\end{prop}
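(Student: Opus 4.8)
The plan is to assemble the connected sum $K = K_1 \# K_2$ by splitting it into three regions along two vertical dividing lines, with $K_1$ (minus its right cusp) on the left, $K_2$ (minus its left cusp) on the right, and in the middle a trivial product cobordism on $n=2$ strands, and then to apply the Mayer--Vietoris remark following Theorem~\ref{thm:mayer-vietoris}. Concretely, take $K_{12}$ to be the region containing the left part together with the product cobordism, so that $DA(K_{12})$ is essentially $A(K_1^A)$ from Example~\ref{exa:connect-sum-A}, and $K_{23}$ to be the product cobordism together with the right part, so that $DA(K_{23})$ is $D(K_2^D)$ from Example~\ref{exa:connect-sum-D}; the overlap $K_2$ is the product cobordism itself, whose algebra is simply $I_2$. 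Then $DA(K_{123}) = Ch(K)$ and the generalized van Kampen pushout identifies $Ch(K)$ as $A(K_1^A) \coprod_{I_2} D(K_2^D)$.

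First I would construct the augmentation $\epsilon$ on $Ch(K)$: by the pushout property of Theorem~\ref{thm:generalized-pairing} (equivalently Theorem~\ref{thm:pushout-square}), an augmentation of $Ch(K)$ is the same as a pair of augmentations $\epsilon_A$ of $A(K_1^A)$ and $\epsilon_D$ of $D(K_2^D)$ agreeing on $I_2$. From $\epsilon_1$ on $Ch(K_1)$ we get $\epsilon_A$ on the subalgebra $A(K_1^A)$ with $\epsilon_A(w(\rho)) = 1$ as observed in Example~\ref{exa:connect-sum-A}; from $\epsilon_2$ on $Ch(K_2)$ we get $\epsilon_D$ on $D(K_2^D)$ with $\epsilon_D(\rho) = 1$ as in Example~\ref{exa:connect-sum-D}. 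These agree on $I_2$ (both send $\rho \mapsto 1$), so together they define the canonical $\epsilon$ on $Ch(K)$. Next I would feed this into the Mayer--Vietoris sequence of the remark: with middle term $H_k(K_2^\epsilon) = H_k(I_2^\epsilon)$, which is $\langle \rho+1\rangle \cong \mathbb{F}$ in degree $0$ and zero otherwise, we get $H_k(Ch(K)^\epsilon) \cong H_k(A(K_1^A)^\epsilon) \oplus H_k(D(K_2^D)^\epsilon)$ for $k \neq 0,1$, together with the five-term exact sequence in degrees $0,1$ involving the connecting map out of $\langle \rho+1\rangle$.

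The key point is then to combine this with the two Chekanov polynomial computations already in hand. Example~\ref{exa:connect-sum-A} gives $P_{\epsilon_A}^{K_1^A}(t) = P_{\epsilon_1}^{K_1}(t) - t$ unconditionally, using Sabloff's fundamental class to kill the connecting map on the $A$-side. Example~\ref{exa:connect-sum-D} gives $P_{\epsilon_D}^{K_2^D}(t) = P_{\epsilon_2}^{K_2}(t) - t$ provided $i_*[\rho+1] = 0$ in $H_0(D(K_2^D)^\epsilon)$. Thus the main obstacle is exactly verifying that $i_*[\rho+1] = 0$, i.e.\ confirming the conjecture at the end of Example~\ref{exa:connect-sum-D} \emph{in this particular case}. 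Here I would again invoke the Sabloff fundamental class, now for $Ch(K_2)^{\epsilon_2}$: since $\kappa = \sum_{v \in V} v$ with every right cusp of $K_2$ in $V$ but the removed left cusp irrelevant, $\partial \kappa = 0$ expresses a nontrivial degree-$1$ cycle of $Ch(K_2)^{\epsilon_2}$, and one checks that under the inclusion-type correspondence between $Ch(K_2)$ and $D(K_2^D)$ this forces $\rho+1 = \partial(\text{something}) $ in $D(K_2^D)^\epsilon$, so that $i_*[\rho+1]$ vanishes; here it is essential that $D(K_2^D)$ contains all the vertices of $K_2$, unlike the $A$-side. Granting this, $P_{\epsilon_D}^{K_2^D}(t) = P_{\epsilon_2}^{K_2}(t) - t$, and since the connecting maps vanish on both sides the Mayer--Vietoris sequence degenerates to $H_k(Ch(K)^\epsilon) \cong H_k(A(K_1^A)^\epsilon) \oplus H_k(D(K_2^D)^\epsilon)$ in \emph{every} degree, whence
\[
P_\epsilon^K(t) = P_{\epsilon_A}^{K_1^A}(t) + P_{\epsilon_D}^{K_2^D}(t) = \bigl(P_{\epsilon_1}^{K_1}(t) - t\bigr) + \bigl(P_{\epsilon_2}^{K_2}(t) - t\bigr) + t = P_{\epsilon_1}^{K_1}(t) + P_{\epsilon_2}^{K_2}(t) - t,
\]
the extra $+t$ appearing because the degenerate sequence does \emph{not} kill a further copy of $\mathbb{F}$ once both connecting maps are zero; I would double-check this bookkeeping against the degree-$0$ and degree-$1$ exact sequence directly rather than trusting the naive direct sum.
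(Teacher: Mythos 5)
Your setup (splitting $K$ into $K^{A}$ and $K^{D}$ along a line through the connect-sum neck, assembling $\epsilon$ from $\epsilon_{A}$ and $\epsilon_{D}$ via the pushout, and running the Mayer--Vietoris sequence) matches the paper's proof, and your degree-$0,1$ bookkeeping in the case $f([\rho+1])=0$ is correct. The gap is at the step you yourself flag as ``the main obstacle'': you assert that the Sabloff fundamental class of $Ch(K_{2})^{\epsilon_{2}}$ ``forces $\rho+1=\partial(\text{something})$ in $D(K_{2}^{D})^{\epsilon}$,'' but the sketched argument does not deliver this. Lifting $\kappa=\sum_{v\in V}v$ to $\tilde{\kappa}\in D^{\epsilon}$ and using that $\partial\tilde{\kappa}$ maps to $\partial^{\epsilon_{2}}\kappa=0$ under the quotient $D^{\epsilon}\to Ch(K_{2})^{\epsilon_{2}}$ only shows $\partial\tilde{\kappa}\in\langle\rho+1\rangle$; whether it equals $\rho+1$ or $0$ depends on the parity of the number of half-disks with all vertex corners augmented, summed over $v\in V$, and there is no analogue of the $A$-side trick where $w(\rho+1)$ is literally the single term $\partial x$ of $\partial\kappa$ that does not already live in the subcomplex. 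Indeed, the paper explicitly leaves $i_{*}[\rho+1]=0$ as a conjecture at the end of Example \ref{exa:connect-sum-D}, so this step cannot be waved through with ``one checks.''

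Fortunately the conclusion does not need it. Since $w_{*}[\rho+1]=0$ always (Example \ref{exa:connect-sum-A}), the connecting map is $f([\rho+1])=(0,i_{*}[\rho+1])$, and you can simply treat both possibilities: if $i_{*}[\rho+1]=0$ the sequence degenerates and $P_{\epsilon}^{K}=(P_{\epsilon_{1}}^{K_{1}}-t)+(P_{\epsilon_{2}}^{K_{2}}-t)+t$ as you computed; if $i_{*}[\rho+1]\neq0$ then $f$ is injective, so $H_{1}(Ch^{\epsilon})\cong H_{1}(A^{\epsilon})\oplus H_{1}(D^{\epsilon})$ while $H_{0}(Ch^{\epsilon})$ loses one dimension, and the same Example \ref{exa:connect-sum-D} now gives $P_{\epsilon_{D}}^{K^{D}}=P_{\epsilon_{2}}^{K_{2}}+1$, so $P_{\epsilon}^{K}=(P_{\epsilon_{1}}^{K_{1}}-t)+(P_{\epsilon_{2}}^{K_{2}}+1)-1$. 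Both cases simplify to $P_{\epsilon_{1}}^{K_{1}}(t)+P_{\epsilon_{2}}^{K_{2}}(t)-t$, which is exactly how the paper closes the argument. Replace your claimed verification of $i_{*}[\rho+1]=0$ with this two-case analysis and the proof is complete.
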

\begin{proof}
Removing cusps from $K_{1}$ and $K_{2}$ as described, and assigning
Maslov potentials so that the strands at each removed cusp have matching
potentials, gives half diagrams $K^{A}$ and $K^{D}$ with augmentations
$\epsilon_{A}:A(K^{A})\to\mathbb{F}$ and $\epsilon_{D}:D(K^{D})\to\mathbb{F}$
as in Examples \ref{exa:connect-sum-A} and \ref{exa:connect-sum-D}.
Since these satisfy $\epsilon_{A}(w(\rho))=1$ and $\epsilon_{D}(\rho)=1$,
they are compatible with the maps $w:I_{2}\to A(K^{A})$ and $i:I_{2}\hookrightarrow D(K^{D})$
and thus give an augmentation $\epsilon:Ch(K)\to\mathbb{F}$ by Theorem
\ref{thm:pushout-square}.

Once again $H_{k}(I_{2}^{\epsilon})$ vanishes for $k\not=0$, so
$H_{k}(Ch(K)^{\epsilon})\cong H_{k}(A^{\epsilon})\oplus H_{k}(D^{\epsilon})$
for $k\not=0,1$, and we have an exact sequence \[
0\to H_{1}(A^{\epsilon})\oplus H_{1}(D^{\epsilon})\to H_{1}(Ch^{\epsilon})\to\langle\rho+1\rangle\stackrel{f}{\to}H_{0}(A^{\epsilon})\oplus H_{0}(D^{\epsilon})\to H_{0}(Ch^{\epsilon})\to0.\]
Recalling that $w_{*}:\langle\rho+1\rangle\to H_{0}(A^{\epsilon})$
is zero, this leaves us with two cases depending on the image of the
map $f$: \[
P_{\epsilon}^{K}(t)=\begin{cases}
(P_{\epsilon_{1}}^{K_{1}}(t)-t)+(P_{\epsilon_{2}}^{K_{2}}(t)-t)+t, & f([\rho+1])=(0,0)\\
(P_{\epsilon_{1}}^{K_{1}}(t)-t)+(P_{\epsilon_{2}}^{K_{2}}(t)+1)-1, & f([\rho+1])=(0,y)\end{cases}\]
where $y$ is some nonzero homology class. In both cases this simplifies
to $P_{\epsilon}^{K}(t)=P_{\epsilon_{1}}^{K_{1}}(t)+P_{\epsilon_{2}}^{K_{2}}(t)-t$,
as desired.
\end{proof}

\section{Tangle replacement}

Suppose we want to consider the effect of a tangle replacement on
the DGA of a front. We can try to isolate the tangle by placing dividing
lines on either side, comparing the type DA algebras of the corresponding
section of the diagram both before and after the replacement, and
applying Theorem \ref{thm:generalized-pairing}. This is hard in general
because in addition to comparing the type DA algebras, we must ensure
that the algebras of both dividing lines act compatibly on the type
DA algebras.

We can avoid this problem almost completely by applying a trick from
\cite[Chapter 5]{Ng:2001p667}. Given a tangle $T$ in the middle
of the diagram, we can perform a series of Legendrian Reidemeister
moves to lift it to the top of the diagram and then pull it to the
right end of the front by an isotopy:

\begin{center}
\includegraphics{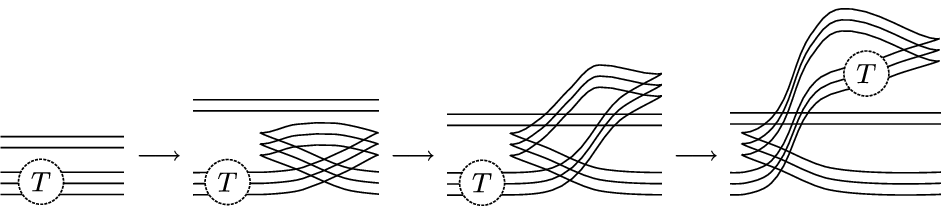}
\par\end{center}

\noindent The effect of the replacement on this new front is often
much easier to determine.
\begin{prop}
\label{pro:tangle-d-diagram}Let $T_{1}$ and $T_{2}$ be Legendrian
tangles, and let $\tilde{T}_{1}$ and $\tilde{T}_{2}$ be half-diagrams
constructed from $T_{1}$ and $T_{2}$ as in Figure \ref{fig:tangle-d-diagram},
possibly modified by some Legendrian Reidemeister moves.  Then
given a morphism $\varphi:D(\tilde{T}_{1})\to D(\tilde{T}_{2})$ which
fixes $\rho_{ij}$ for all $i$ and $j$, we have a pushout diagram
\[
\xymatrix{D(\tilde{T}_{1})\ar[r]^{\varphi}\ar[d] & D(\tilde{T}_{2})\ar[d]\\
Ch(K_{1})\ar[r]^{\tilde{\varphi}} & Ch(K_{2})}
\]
where $K_{1}$ and $K_{2}$ are any fronts which differ only by replacing
$T_{1}$ with $T_{2}$.

\begin{figure}
\begin{centering}
\includegraphics{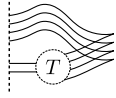}
\par\end{centering}

\caption{The half-diagram $\tilde{T}$ associated to a tangle $T$ in Proposition
\ref{pro:tangle-d-diagram}.\label{fig:tangle-d-diagram}}

\end{figure}
\end{prop}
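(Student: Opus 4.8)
The plan is to realize the displayed square as an instance of the generalized van Kampen theorem (Theorem~\ref{thm:generalized-pairing}) applied to the lifted-and-pushed-to-the-right fronts. First I would set up the geometry: after the Reidemeister moves and isotopy indicated above, $K_1$ (resp.\ $K_2$) is a front in which the tangle region $T_1$ (resp.\ $T_2$) has been moved to the far right, so we may place a single dividing line just to the left of that region. This cuts $K_i$ into a left half, which is the same fixed front $K^A$ in both cases, and a right half, which is exactly $\tilde{T}_i$. Thus $D(\tilde{T}_i)$ is the type~D algebra of the right half, and by Theorem~\ref{thm:pushout-square} we have pushout squares
\[
\xymatrix{I_n\ar[r]\ar[d]_{w} & D(\tilde{T}_i)\ar[d]\\ A(K^A)\ar[r] & Ch(K_i)}
\]
for $i=1,2$, with the \emph{same} left column $w:I_n\to A(K^A)$ in both cases because the left half $K^A$ is unchanged by the tangle replacement.

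Next I would define $\tilde{\varphi}:Ch(K_1)\to Ch(K_2)$ using the universal property. Consider the outer commutative square obtained by composing the $K_1$-pushout with $\varphi:D(\tilde{T}_1)\to D(\tilde{T}_2)$ on the right edge and with the map $D(\tilde{T}_2)\to Ch(K_2)$; commutativity here is where the hypothesis that $\varphi$ fixes every $\rho_{ij}$ is essential, since it guarantees that $I_n\to D(\tilde{T}_1)\xrightarrow{\varphi} D(\tilde{T}_2)\to Ch(K_2)$ agrees with $I_n\xrightarrow{w} A(K^A)\to Ch(K_2)$. Because the $K_1$-square is a pushout, there is a unique DGA morphism $\tilde{\varphi}:Ch(K_1)\to Ch(K_2)$ making everything commute; concretely it fixes all vertices of $K^A$ and sends a vertex $v$ of $\tilde{T}_1$ to (the image in $Ch(K_2)$ of) $\varphi(v)$.

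It remains to show the resulting square in the statement is itself a pushout. The clean way is to observe that both the top edge $D(\tilde{T}_1)\xrightarrow{\varphi}D(\tilde{T}_2)$ and the bottom edge $Ch(K_1)\xrightarrow{\tilde{\varphi}}Ch(K_2)$ are obtained by "gluing on $A(K^A)$ along $I_n$": precisely, $Ch(K_i)=A(K^A)\coprod_{I_n}D(\tilde{T}_i)$, so the square is the image under the functor $A(K^A)\coprod_{I_n}(-)$ of the morphism $\varphi$, and pushing out a morphism along a fixed cospan commutes with forming the pushout square (a standard diagram chase, or the pasting lemma for pushout squares). Alternatively, and more self-containedly, I would verify the universal property directly: given $Ch(K_2)\to X$ and $D(\tilde{T}_1)\to X$ agreeing on $D(\tilde{T}_2)$ appropriately, restrict along $A(K^A)\hookrightarrow Ch(K_2)\to X$ and along $D(\tilde{T}_1)\to X$ to get compatible maps out of $A(K^A)$ and $D(\tilde{T}_1)$ which agree on $I_n$, then invoke the $K_1$-pushout to produce the required map $Ch(K_1)\to X$, and check uniqueness on generators (vertices of $K^A$ and of $\tilde{T}_1$).

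The main obstacle I anticipate is purely geometric rather than categorical: justifying that the Reidemeister-move-and-isotopy "lift the tangle to the right" operation really does produce fronts $K_1,K_2$ sharing a common left half $K^A$ after a \emph{single} dividing line, and that the right halves are $\tilde{T}_1$ and $\tilde{T}_2$ as drawn in Figure~\ref{fig:tangle-d-diagram}. Once the fronts are arranged this way, everything else is a formal consequence of Theorem~\ref{thm:pushout-square} and the hypothesis that $\varphi$ preserves the $\rho_{ij}$; the $\rho$-preservation is exactly what is needed for the map $A(K^A)\to Ch(K_2)$ to be "the same" in both pushout diagrams, which is the crux of the compatibility.
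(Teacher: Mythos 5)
Your proposal is correct and follows essentially the same route as the paper: move the tangle to the right so that $K_1$ and $K_2$ share a common left half $K^A$, apply Theorem~\ref{thm:pushout-square} to both fronts with the same map $w:I_n\to A(K^A)$, and conclude by pasting pushout squares (the paper forms the abstract pushout $\mathcal{A}$ of the right-hand square and identifies it with $Ch(K_2)$ via the outer rectangle, which is just the other direction of the same pasting argument you use). Your explicit remark that $\varphi$ fixing the $\rho_{ij}$ is what makes the composite $I_n\to D(\tilde{T}_1)\to D(\tilde{T}_2)$ equal the inclusion is a point the paper leaves implicit, and is worth stating.
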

\begin{proof}
Use the trick mentioned above to modify each front $K_{i}$ by producing
$\tilde{T}_{i}$ on the right side of the diagram, and
place a dividing line in each $K_{i}$ which separates $\tilde{T}_{i}$
from some half-diagram $K^{A}$ on the left; then $K^{A}$
is independent of $i$, as is the map $w:I_{n}\to A(K^{A})$. Consider
the commutative diagram \[
\xymatrix{I_{n}\ar[r]\ar[d]_{w} & D(\tilde{T}_{1})\ar[r]^{\varphi}\ar[d] & D(\tilde{T}_{2})\ar[d]\\
A(K^{A})\ar[r] & Ch(K_{1})\ar[r] & \mathcal{A}}
\]
where the left square is a pushout by Theorem \ref{thm:pushout-square}
and $\mathcal{A}$ is some DGA making the right square a pushout as
well. Since pushouts are associative, the outer rectangle of this
diagram is a pushout square as well, so $\mathcal{A}$ must be isomorphic
to $Ch(K_{2})$ and the right square gives the desired diagram.\end{proof}
\begin{rem}
In general, the algebras $Ch(K_{1})$ and $Ch(K_{2})$ of fronts in
which we perform tangle replacements are not identical to the ones
for which we have the morphism $\tilde{\varphi}$, since $\tilde{\varphi}$
is constructed from equivalent fronts in which we can isolate the
half-diagrams $D(\tilde{T}_{i})$, but they are the same up to stable
tame isomorphism. Thus in applications we will write $Ch(K)$ to refer to a
DGA which is stable tame isomorphic to $Ch(K)$, but this should not cause
any confusion.\end{rem}
\begin{cor}
Let $K_{1}$ and $K_{2}$ differ by replacing tangle $T_{1}$ with
$T_{2}$, and suppose we have a morphism $\varphi:D(\tilde{T}_{1})\to D(\tilde{T}_{2})$
as in Proposition \ref{pro:tangle-d-diagram}. If $Ch(K_{2})$ admits
an augmentation, then so does $Ch(K_{1})$.\end{cor}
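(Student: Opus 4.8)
The plan is to observe that this is an immediate consequence of Proposition \ref{pro:tangle-d-diagram} together with the fact that a composition of DGA morphisms is again a DGA morphism. By Proposition \ref{pro:tangle-d-diagram}, the hypothesized morphism $\varphi\colon D(\tilde T_1)\to D(\tilde T_2)$ gives rise to a morphism $\tilde\varphi\colon Ch(K_1)\to Ch(K_2)$ in the category of DGAs (obtained as the induced map of pushouts). Now an augmentation of $Ch(K_2)$ is by definition a morphism $\epsilon\colon Ch(K_2)\to\mathbb{F}$, where $\mathbb{F}$ carries the zero differential and is concentrated in degree $0$. I would then simply form the composite
\[
\epsilon\circ\tilde\varphi\colon Ch(K_1)\to Ch(K_2)\to\mathbb{F}.
\]
Since $\tilde\varphi$ and $\epsilon$ are each algebra homomorphisms preserving gradings and commuting with the differentials, so is their composite; in particular $(\epsilon\circ\tilde\varphi)(1)=1$, $(\epsilon\circ\tilde\varphi)\circ\partial=\epsilon\circ\partial\circ\tilde\varphi=0$, and $\epsilon\circ\tilde\varphi$ kills every element of pure nonzero degree. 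Hence $\epsilon\circ\tilde\varphi$ is an augmentation of $Ch(K_1)$.

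The only point requiring a word of care is the same one flagged in the preceding remark: the DGAs $Ch(K_1)$ and $Ch(K_2)$ appearing here are really the algebras of the equivalent fronts in which $\tilde T_1$ and $\tilde T_2$ have been isolated on the right, so they agree with the genuine Chekanov--Eliashberg invariants only up to stable tame isomorphism. Since the existence of an augmentation is preserved under stable tame isomorphism (an augmentation of a DGA pulls back along a tame isomorphism and extends across a stabilization by sending the two new generators $a,b$ with $\partial a=b$ to $0$), this causes no difficulty: an augmentation of the genuine $Ch(K_2)$ yields one of the isolated model, we transport it to $Ch(K_1)$ via $\epsilon\circ\tilde\varphi$ as above, and then transport back to the genuine $Ch(K_1)$. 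I do not anticipate any real obstacle here, as every step is formal once Proposition \ref{pro:tangle-d-diagram} is in hand.
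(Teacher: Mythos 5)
Your proposal is correct and matches the paper's argument exactly: the paper also obtains $\tilde\varphi$ from Proposition \ref{pro:tangle-d-diagram} and observes that $\epsilon\circ\tilde\varphi$ is the desired augmentation. The extra care you take about stable tame isomorphism is handled in the paper by the remark following that proposition, so nothing is missing.
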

\begin{proof}
By Proposition \ref{pro:tangle-d-diagram} we have a morphism $\tilde{\varphi}:Ch(K_{1})\to Ch(K_{2})$,
and since an augmentation of $Ch(K_{2})$ is just a morphism $\epsilon:Ch(K_{2})\to\mathbb{F}$
it follows that $\epsilon\circ\tilde{\varphi}$ is an augmentation
of $Ch(K_{1})$.
\end{proof}
In the following subsections we will give several applications of
this result.  We will adopt the convention that a double arrow in any
figure refers to a tangle replacement or other move which changes the 
Legendrian knot or tangle in question, whereas a single arrow indicates a
Legendrian isotopy.

\subsection{Breaking a pair of horizontal strands}

Consider the effect of the following tangle replacement:

\noindent \begin{center}
\includegraphics{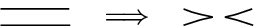}
\par\end{center}

\noindent where in both tangles, the upper strands have Maslov potential
$\mu+1$ and the lower strands have Maslov potential $\mu$ for some $\mu$.
We will label the left tangle consisting of two parallel strands by $P$,
and the right tangle consisting of two cusps by $C$.
Construct the half-diagrams $\tilde{P}$ and $\tilde{C}$ as follows:

\begin{center}
\includegraphics{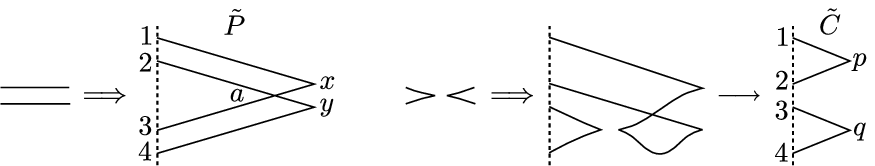}
\par\end{center}

\noindent where in both $\tilde{P}$ and $\tilde{C}$, the strands
through points $1,2,3,$ and $4$ on the dividing line have potentials
$\mu+2,\mu+1,\mu+1,$ and $\mu$ respectively.

Construct a new DGA $D'$ by adding an extra free generator $c$ to
the type D algebra $D(\tilde{P})$ satisfying $\partial c=1+\rho_{12}$.
Then $D'$ is generated by $c,a,x,y,$ and $\rho_{ij}$ for $1\leq i<j\leq4$
satisfying \begin{eqnarray*}
\partial c & = & 1+\rho_{12}\\
\partial a & = & \rho_{23}\\
\partial x & = & 1+\rho_{12}a+\rho_{13}\\
\partial y & = & 1+a\rho_{34}+\rho_{24}\end{eqnarray*}
with gradings $|a|=0$ and $|c|=|x|=|y|=1$. On the other hand, the
algebra $D(\tilde{C})$ is generated by $p,q$ and $\rho_{ij}$ with
$\partial p=1+\rho_{12}$ and $\partial q=1+\rho_{34}$, and $|p|=|q|=1$.
(In both algebras we have $|\rho_{14}|=1$, $|\rho_{23}|=-1$, and
$|\rho_{ij}|=0$ for all other $\rho_{ij}$.)
\begin{lem}
The algebra $D'$ is stable tame isomorphic to $D(\tilde{C})$ by
isomorphisms fixing all of the generators $\rho_{ij}$.\end{lem}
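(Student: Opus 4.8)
The plan is to exhibit an explicit chain of tame isomorphisms and (de)stabilizations connecting $D'$ to $D(\tilde{C})$, all of which fix the generators $\rho_{ij}$, so that the conclusion follows by composition. First I would observe that in $D'$ the generator pair $(x,\rho_{13})$ forms a cancelling pair after a tame change of coordinates: since $\partial x = 1 + \rho_{12}a + \rho_{13}$, I would like to destabilize by eliminating $x$ and $\rho_{13}$. The standard move here (the algebraic version of a Lagrangian projection ``handle-slide'') is to first perform a tame isomorphism that replaces $\rho_{13}$ by $\rho_{13} + 1 + \rho_{12}a$ wherever it appears as part of some other differential, turning $\partial x$ into exactly $\rho_{13}$, and then destabilize the pair $(x,\rho_{13})$. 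However, $\rho_{13}$ is one of the generators we are required to fix, so I cannot eliminate it directly; instead I would use $x$ to cancel against a different generator. The cleaner approach is to cancel the pair $(c,\rho_{12})$-style data against $x$: rewrite $\partial x = 1+\rho_{13}+\rho_{12}a$, and note $\partial c = 1+\rho_{12}$ so $\partial(x+c) = \rho_{13}+\rho_{12}a+\rho_{12} = \rho_{13}+\rho_{12}(a+1)$. This still involves $\rho$'s. So the genuinely safe cancellations must not touch any $\rho_{ij}$, which forces us to cancel pairs among $\{c,a,x,y\}$ only — but none of these has differential equal to another, so no destabilization among them is possible without first stabilizing.

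Accordingly, the better strategy is: stabilize $D(\tilde{C})$ and then match. Concretely, $D(\tilde{C})$ has generators $p,q,\rho_{ij}$ with $\partial p = 1+\rho_{12}$, $\partial q = 1+\rho_{34}$. Stabilize by adding a pair $(e,\bar e)$ with $|e|=1$, $|\bar e|=0$, $\partial e = \bar e$, $\partial \bar e = 0$. Now I would try to build a tame isomorphism from this stabilized $D(\tilde{C})$ to $D'$ sending $\rho_{ij}\mapsto \rho_{ij}$ and matching $|$-degrees: the degree-$0$ generators on each side are $\bar e$ (in the stabilization) and $a$ (in $D'$), so send $\bar e \mapsto a - \rho_{23}$ is tempting but $\partial\bar e = 0$ while $\partial a = \rho_{23}$, so rather send $a \leftrightarrow \bar e$ and absorb $\rho_{23}$ by first changing coordinates. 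The degree-$1$ generators are $p,q,e$ on one side and $c,x,y$ on the other. I would set up the correspondence $c\leftrightarrow p$ (both have $\partial\,\cdot = 1+\rho_{12}$), then try $x \leftrightarrow e$ and $y\leftrightarrow q$, adjusting by a tame automorphism. The point is that $\partial x = 1+\rho_{13}+\rho_{12}a$ and $\partial q = 1+\rho_{34}$ are not equal, so I must route through $\partial(x+c) = \rho_{13}+\rho_{12}(a+1)$ and $\partial(y+ c\cdot(\text{something}))$, using the identities among the $\rho$'s implied by $\partial^2=0$ in $I_4$ (namely $\partial\rho_{13}=\rho_{12}\rho_{23}$, $\partial\rho_{24}=\rho_{23}\rho_{34}$, $\partial\rho_{14}=\rho_{12}\rho_{24}+\rho_{13}\rho_{34}$) to verify that the proposed substitutions are genuine chain maps.

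The cleanest realization, which I would actually write out: perform in $D'$ the tame automorphism $a \mapsto a$, $x \mapsto x$, $y\mapsto y$, $c\mapsto c + x(\text{correction})$ is not needed; instead first do the substitution that replaces the variable $a$ by $a' = a+ c'$ for a suitable degree-$0$ expression killing $\rho_{23}$ — but $\rho_{23}$ has degree $-1$, and there is no degree-$0$ generator other than $a$, so $\partial a = \rho_{23}$ cannot be changed. This tells me the honest route is to cancel the pair $(x,\,\cdot)$ first. Note $\partial x = 1+\rho_{12}a+\rho_{13}$ and that $\rho_{13}$ appears in no other differential of a \emph{vertex} (it only appears inside $\partial\rho_{14}$); so I can do a tame isomorphism of $D'$ sending $\rho_{14}\mapsto \rho_{14}$ but adjusting — no, again $\rho_{14}$ is fixed. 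So $x$ and $\rho_{13}$ form a free cancelling pair \emph{except} that $\rho_{13}$ is a protected generator. Since we are allowed stable tame isomorphism, I would add to $D'$ a stabilization pair realizing ``$\rho_{13}$'' abstractly: this is the content of the inclusion $I_n\hookrightarrow D$ being a cofibration-like map, and the lemma is really saying the two \emph{relative} algebras over $\mathbb F\langle\rho_{ij}\rangle$ agree. Thus the main obstacle — and the step I expect to spend the most care on — is checking that the single cancellation of $x$ against $\rho_{13}$ can be performed \emph{relative to} the subalgebra generated by the $\rho_{ij}$, i.e. that after substituting $\rho_{13} \rightsquigarrow 1+\rho_{12}a$ everywhere (formally, introducing a new generator $\rho_{13}'$ with $\partial\rho_{13}' = \rho_{12}\rho_{23}$ and the tame change $\rho_{13} = \rho_{13}' + 1 + \rho_{12}a$, which is legal since it fixes $\rho_{13}$ \emph{as an element} up to lower terms) the algebra $D'$ becomes $D(\tilde C)$ stabilized, with $y$ and $q$ matched via $y\mapsto q + a\rho_{34}+\rho_{24}+\rho_{34}$ — a substitution whose chain-map property reduces exactly to $\partial\rho_{24}=\rho_{23}\rho_{34}$ and $\partial a = \rho_{23}$. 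I would verify this final identity by direct expansion, note that all intermediate moves fix every $\rho_{ij}$, and conclude.
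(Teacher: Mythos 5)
Your proposal does not close the argument, and the place where it goes wrong is identifiable. You assert that ``$\partial a=\rho_{23}$ cannot be changed'' because there is no degree-$0$ generator other than $a$. This is false: a tame isomorphism may add to $a$ any degree-$0$ \emph{element} of the subalgebra generated by the remaining generators, not just a generator. In particular $c\rho_{23}$, $\rho_{13}$, and $1$ all have degree $0$, and the substitution $a\mapsto a+c\rho_{23}+\rho_{13}+1$ is legal, fixes every $\rho_{ij}$, and kills $\partial a$ entirely (using $\partial c=1+\rho_{12}$ and $\partial\rho_{13}=\rho_{12}\rho_{23}$, the terms cancel in pairs over $\mathbb{F}$). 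This is the key move you are missing. Once $\partial a=0$, a further tame change of $x$ (again only adding terms built from $c$, $a$, and the $\rho_{ij}$) arranges $\partial x=a$, so that $(x,a)$ is a stabilization pair that can be removed; a final change of $y$ gives $\partial y=1+\rho_{34}$, and relabeling $c,y$ as $p,q$ yields $D(\tilde C)$. At no point is any $\rho_{ij}$ touched.

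Your fallback --- introducing $\rho_{13}'$ with $\rho_{13}=\rho_{13}'+1+\rho_{12}a$ and cancelling $x$ against $\rho_{13}$ --- is exactly what the lemma forbids: the stable tame isomorphism must commute with the inclusion of $I_{4}$, i.e.\ send each generator $\rho_{ij}$ to itself, since this is what makes the pushout argument of Proposition \ref{pro:tangle-d-diagram} go through. A change of variables that rewrites $\rho_{13}$ in terms of a new generator does not fix $\rho_{13}$ in the required sense. Separately, your proposed substitution $y\mapsto q+a\rho_{34}+\rho_{24}+\rho_{34}$ is not degree-homogeneous ($|y|=1$ but the correction terms have degree $0$), so it is not a graded tame isomorphism. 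The correct correction for $y$ is $y\mapsto y+c\rho_{24}+\rho_{14}+x\rho_{34}$, each of whose terms has degree $1$.
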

\begin{proof}
Apply a sequence of tame isomorphisms to $D'$ of the form \begin{eqnarray*}
a & \to & a + c\rho_{23} + \rho_{13} + 1 \\
x & \to & x + c(a + c\rho_{23} + c\rho_{13} + 1) \\
y & \to & y + c\rho_{24} + \rho_{14} + x\rho_{34};
\end{eqnarray*}
we can now easily compute that $\partial a=0$, $\partial x=a$, and 
$\partial y=1+\rho_{34}$.  Relabeling $c$ and $y$ by $p$ and $q$, respectively,
and destabilizing to remove the generators $x$ and $a$ sends $D'$ to 
$D(\tilde{C})$, as desired.\end{proof}
\begin{thm}
\label{thm:parallel-break}Let $K'$ be the front obtained from a
Legendrian front $K$ by replacing the tangle $P$ with $C$. Then
$Ch(K)$ and $Ch(K')$ are stable tame isomorphic to DGAs $\mathcal{A}$
and $\mathcal{A}'$, where $\mathcal{A}'$ is obtained from $\mathcal{A}$
by adding a single free generator $c$ in grading 1.  Thus if $Ch(K')$
admins an augmentation, then so does $Ch(K)$.\end{thm}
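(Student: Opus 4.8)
The plan is to apply Proposition~\ref{pro:tangle-d-diagram} with $T_1 = P$ and $T_2 = C$, so that it suffices to produce a morphism $\varphi : D(\tilde{P}) \to D(\tilde{C})$ which fixes every generator $\rho_{ij}$; the proposition then supplies a morphism $\tilde\varphi : Ch(K) \to Ch(K')$ (with both sides understood up to stable tame isomorphism, as in the remark following the proposition), and the augmentation statement is immediate from the corollary: precomposing an augmentation of $Ch(K')$ with $\tilde\varphi$ gives one of $Ch(K)$. So the real content is constructing $\varphi$, and for that I would route through the intermediate algebra $D'$.

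The first step is to observe that $D' = D(\tilde{P}) \amalg_{\mathbb{F}} \mathbb{F}\langle c\rangle$ with $\partial c = 1 + \rho_{12}$ receives an obvious morphism $D(\tilde P)\hookrightarrow D'$ which fixes all $\rho_{ij}$ (it is just the inclusion of a semi-free subalgebra whose differential is unchanged — one only needs $\partial^2 c = \partial(1+\rho_{12}) = \partial\rho_{12} = \rho_{13}\cdot 0 = 0$ since $1<k<2$ is empty, wait, $\partial \rho_{12}=0$ as there is no $k$ with $1<k<2$, so $\partial^2 c = 0$, fine). The second step is the already-stated Lemma: $D'$ is stable tame isomorphic to $D(\tilde C)$ via isomorphisms fixing all $\rho_{ij}$. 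Composing the inclusion $D(\tilde P)\hookrightarrow D'$ with this stable tame isomorphism $D'\xrightarrow{\sim} D(\tilde C)$ yields the desired $\varphi : D(\tilde P)\to D(\tilde C)$ fixing every $\rho_{ij}$. Here I should be slightly careful: a stable tame isomorphism is a composite of tame isomorphisms, stabilizations and destabilizations, and to feed it into Proposition~\ref{pro:tangle-d-diagram} I want an honest morphism; but the destabilizations in the Lemma (removing $x$ and $a$) are legitimate DGA morphisms in the appropriate direction, and in any case Proposition~\ref{pro:tangle-d-diagram} is itself stated only up to stable tame isomorphism, so the bookkeeping is harmless.

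Given $\varphi$, Proposition~\ref{pro:tangle-d-diagram} produces the pushout square with $\tilde\varphi : Ch(K)\to Ch(K')$ along the bottom. To get the precise structural statement — that $Ch(K')$ is obtained from $Ch(K)$ by adjoining one free generator $c$ in grading $1$ — I would unwind the pushout explicitly. Setting $\mathcal{A} = Ch(K)$, the pushout of $D(\tilde P)\hookrightarrow D'$ along $D(\tilde P)\to Ch(K)$ is exactly $\mathcal{A}' := Ch(K)\amalg_{\mathbb{F}}\mathbb{F}\langle c\rangle = \mathcal{A}\amalg \mathbb{F}\langle c\rangle$ with $\partial c = 1 + w(\rho_{12})$, because pushout along a free extension just transports the new generator and its differential across. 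Since $D'\xrightarrow{\sim} D(\tilde C)$ is a stable tame isomorphism fixing the $\rho_{ij}$, pushing out along $D(\tilde P)\to Ch(K)$ preserves stable tame isomorphism type, so the pushout $Ch(K')$ of $\varphi$ along $D(\tilde P)\to Ch(K)$ is stable tame isomorphic to $\mathcal{A}'$. Thus $Ch(K)$ and $Ch(K')$ are stable tame isomorphic to $\mathcal{A}$ and $\mathcal{A}' = \mathcal{A}\amalg\mathbb{F}\langle c\rangle$ respectively, with $|c|=1$. (One small check: $\partial c = 1 + w(\rho_{12})$ makes $c$ an honest degree-$1$ generator with $\partial^2 c = \partial w(\rho_{12}) = w(\partial\rho_{12}) = 0$.) Finally, if $\epsilon' : \mathcal{A}'\to\mathbb{F}$ is an augmentation, then its restriction to $\mathcal{A}$ is an augmentation of $\mathcal{A}$ — the only thing to verify is $\epsilon'\circ\partial = 0$ on generators of $\mathcal{A}$, which is inherited since $\partial$ on those generators is unchanged — giving an augmentation of $Ch(K)$, which completes the proof.

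The step I expect to be the main obstacle is verifying that the composite $\varphi : D(\tilde P) \to D(\tilde C)$ genuinely fixes all $\rho_{ij}$ \emph{and} that the pushout of the free extension $D(\tilde P)\hookrightarrow D'$ along $D(\tilde P)\to Ch(K)$ is literally $\mathcal{A}\amalg\mathbb{F}\langle c\rangle$ rather than something more complicated; both are ``obvious'' but require tracking that the stabilizations/destabilizations in the Lemma do not interact with the $\rho_{ij}$ and that pushing out a free generator with a specified differential behaves functorially. The calculations in the Lemma itself (the explicit tame isomorphisms reducing $D'$ to $D(\tilde C)$) are routine and already recorded in the excerpt.
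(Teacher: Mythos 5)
Your proposal is correct and follows essentially the same route as the paper: the inclusion $D(\tilde P)\hookrightarrow D'$ combined with the lemma that $D'$ is stable tame isomorphic to $D(\tilde C)$ (fixing all $\rho_{ij}$), fed into Proposition \ref{pro:tangle-d-diagram}, with the pushout of the free extension giving $\mathcal{A}'=\mathcal{A}\amalg\mathbb{F}\langle c\rangle$, $\partial c=1+w(\rho_{12})$. The paper's own proof is a one-line version of exactly this argument; your extra bookkeeping about destabilizations being honest DGA morphisms and the pushout transporting the free generator is correct and only makes explicit what the paper leaves implicit.
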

\begin{proof}
We have constructed an inclusion $D(\tilde{P})\hookrightarrow D'\cong D(\tilde{U})$,
so Proposition \ref{pro:tangle-d-diagram} gives us the induced map
$Ch(K)\hookrightarrow Ch(K')$.
\end{proof}
\begin{rem}
Once we have the morphism $D(\tilde{P})\hookrightarrow D(\tilde{U})$,
we could just construct the map $Ch(K)\to Ch(K')$ directly by using
the same sequence of tame isomorphisms and destabilizations, but replacing
each $\rho_{ij}$ with $w(\rho_{ij})\in A(K^{A})\subset Ch(K)$.
\end{rem}
We can use this to draw similar conclusions about other tangle replacements
as well. For example:
\begin{cor}
\label{cor:half-clasp-augmentation}Let $K'$ be obtained from $K$
by any of the following tangle replacements, where the crossings removed by
each replacement have grading $0$:

\begin{center}
\includegraphics{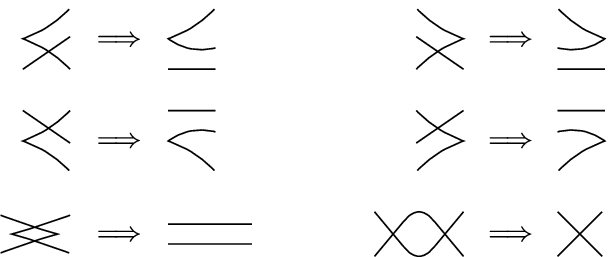}
\par\end{center}

\noindent Then there are DGA maps $Ch(K)\to Ch(K')$, constructed
exactly as in Theorem \ref{thm:parallel-break}, and if $Ch(K')$
has an augmentation then so does $Ch(K).$\end{cor}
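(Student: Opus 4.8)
The plan is to reduce each of the displayed tangle replacements to the situation of Theorem \ref{thm:parallel-break} by prepending and appending Legendrian isotopies. For each replacement I would first draw the half-diagram $\tilde{T}_1$ and $\tilde{T}_2$ associated to the source and target tangles via the construction of Figure \ref{fig:tangle-d-diagram}, then identify a sequence of Legendrian Reidemeister moves (type II moves to pull strands apart, and type III moves to slide the grading-$0$ crossings past one another) which converts $\tilde{T}_1$ into a front containing the tangle $P$ as a subtangle, and $\tilde{T}_2$ into the same front with $P$ replaced by $C$. Since Reidemeister moves induce stable tame isomorphisms of the associated type D algebras fixing the $\rho_{ij}$ (this is exactly the content of the invariance statements used throughout Section 2, applied to half-diagrams as in the remark after Definition \ref{def:simple-front}), and since the crossings being removed have grading $0$ so that they match the hypotheses on the potentials in the statement of Theorem \ref{thm:parallel-break}, this puts us in position to invoke that theorem verbatim.

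Concretely, I would proceed as follows. First, for each of the tangle replacements in the figure, exhibit the explicit Reidemeister sequence identifying it (up to isotopy of the ambient front) with the $P \rightsquigarrow C$ replacement; the grading-$0$ hypothesis guarantees that the Maslov potentials can be arranged so that the two strands entering the $P$-tangle carry potentials $\mu+1$ and $\mu$ as required. Second, observe that the inclusion $D(\tilde{P}) \hookrightarrow D'$ together with the stable tame isomorphism $D' \cong D(\tilde{C})$ of the preceding Lemma gives a morphism $D(\tilde{T}_1) \to D(\tilde{T}_2)$ fixing all $\rho_{ij}$, after composing with the stable tame isomorphisms coming from the Reidemeister moves on the two sides (here one uses that such isomorphisms can be taken to fix the $\rho_{ij}$, since the moves take place away from the dividing line). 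Third, apply Proposition \ref{pro:tangle-d-diagram} to obtain a morphism $\tilde\varphi \colon Ch(K) \to Ch(K')$, and then the final sentence follows exactly as in the Corollary preceding this subsection: if $\epsilon$ is an augmentation of $Ch(K')$, then $\epsilon \circ \tilde\varphi$ is an augmentation of $Ch(K)$.

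The main obstacle I anticipate is purely bookkeeping: verifying that each of the several tangle replacements in the figure really does reduce to $P \rightsquigarrow C$ after an explicit isotopy, and checking in each case that the Maslov potentials line up correctly so that the relevant crossings have grading $0$ and the hypotheses of Theorem \ref{thm:parallel-break} are met. There is no new algebraic content — the algebra was already done in the Lemma and in Theorem \ref{thm:parallel-break} — so the proof should consist of a short remark that each replacement is isotopic to the model case, perhaps with a figure, followed by the one-line deduction about augmentations. I would therefore keep the written proof brief, saying that the maps $Ch(K)\to Ch(K')$ are constructed "exactly as in Theorem \ref{thm:parallel-break}" (as the statement itself already asserts), and simply note that in each case an isotopy brings the tangle into the form treated there.
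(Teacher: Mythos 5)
Your proposal is correct and matches the paper's argument: the paper likewise proves each case by locating a pair of parallel strands adjacent to the grading-$0$ crossing, applying Theorem \ref{thm:parallel-break} there, and then using Legendrian Reidemeister moves (a type I move for the first four replacements, and compositions of the earlier cases for the last two) to identify the result with $K'$ up to stable tame isomorphism, after which the augmentation statement follows by composing with $\tilde{\varphi}$. The only difference is cosmetic — you isotope the tangles into the model form before invoking the theorem, while the paper applies the theorem first and isotopes afterward — and both treatments leave the Reidemeister bookkeeping to a figure.
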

\begin{proof}
We prove the first of these by applying Theorem \ref{thm:parallel-break}
to the tangle in a small neighborhood of the dotted line, and then
performing a type I Reidemeister move:

\begin{center}
\includegraphics{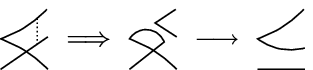}
\par\end{center}

\noindent The proofs of the next three cases are identical, and the
last two are proven as follows:

\begin{center}
\includegraphics{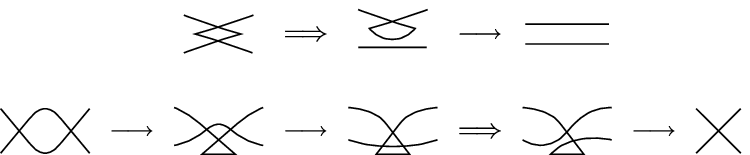}
\par\end{center}

\noindent where in each case we use one of the first four tangle replacements
(indicated by a double arrow) together with some Legendrian Reidemeister
moves.\end{proof}
\begin{example}
Let $K$ be the Legendrian closure of a positive braid in the sense of
\cite{Kalman:2006p630},
so that every crossing has grading $0$. If the top strand of the
braid is not part of any crossing, then it belongs to a Legendrian
unknot (i.e. a topological unknot with $tb=-1$ and $r=0$) disjoint
from the rest of the front and we can remove this unknot. Otherwise
we can use the first tangle replacement from Corollary
\ref{cor:half-clasp-augmentation} to eliminate the leftmost crossing on
this strand:

\begin{center}
\includegraphics{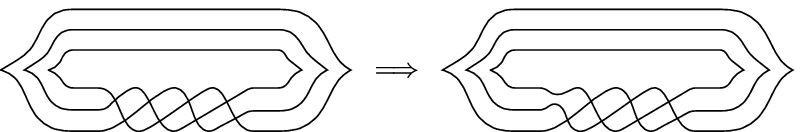}
\par\end{center}

\noindent We can repeat this procedure, removing crossings and
unlinked Legendrian unknots, until $K$ has become a disjoint union
of such unknots. Such a front always admits an augmentation, so by
Corollary \ref{cor:half-clasp-augmentation} we see that $K$ has
an augmentation. (The set of augmentations of $K$ was described by
K{\'a}lm{\'a}n \cite{Kalman:2006p630}, who also described an analogous 
{}``Seifert ruling'' of $K$.)
\end{example}

\subsection{Unhooking a clasp}

Let $X$ and $C$ be tangles consisting of a pair of interlocking
left and right cusps and a pair of disjoint left and right cusps,
respectively, and consider the effect of replacing $X$ with $C$
in a front:

\begin{center}
\includegraphics{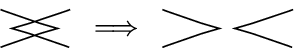}
\par\end{center}

We have already computed the DGA $D(\tilde{C})$ in the previous section:
it is generated freely by $p,q,$ and $\rho_{ij}$ with $\partial p=1+\rho_{12}$,
$\partial q=1+\rho_{34}$, and $|p|=|q|=1$. On the other hand, $\tilde{X}$
is constructed as follows:

\begin{center}
\includegraphics{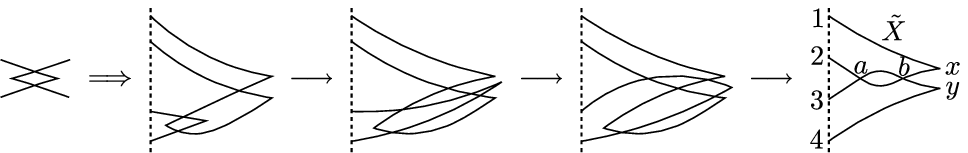}
\par\end{center}

\noindent and so $D(\tilde{X})$ is generated by $x,y,a,b,$ and $\rho_{ij}$
satisfying \begin{eqnarray*}
\partial x & = & 1+\rho_{12}(ab+1)+\rho_{13}b\\
\partial y & = & 1+(ba+1)\rho_{34}+b\rho_{24}\\
\partial a & = & \rho_{23}\\
\partial b & = & 0.\end{eqnarray*}
Let $D''$ be the DGA constructed by adding free generators $c$ and
$d$ to $D(\tilde{X})$ satisfying $\partial c=b$ and $\partial d=a+\rho_{13}+(x+(\rho_{12}a+\rho_{13})c)\rho_{23}$.
\begin{lem}
The DGA $D''$ is stable tame isomorphic to $D(\tilde{C})$.\end{lem}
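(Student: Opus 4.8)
The plan is to exhibit an explicit stable tame isomorphism in the style of the previous lemma (the one comparing $D'$ with $D(\tilde{C})$). First I would record exactly what $D''$ looks like: it is generated by $x,y,a,b,c,d$ together with the $\rho_{ij}$, with $\partial b = 0$, $\partial c = b$, $\partial a = \rho_{23}$, $\partial x = 1+\rho_{12}(ab+1)+\rho_{13}b$, $\partial y = 1+(ba+1)\rho_{34}+b\rho_{24}$, and $\partial d = a+\rho_{13}+(x+(\rho_{12}a+\rho_{13})c)\rho_{23}$. The pair $(c,b)$ with $\partial c = b$ is a stabilization pair in gradings $(1,0)$, and the goal is to use $d$ (in grading $1$, with $\partial d$ having degree $0$) together with a tame change of variables to turn $(d,a)$ into a second stabilization pair — roughly $\partial d \rightsquigarrow a$ — after which destabilizing away $c,b$ and $d,a$ should leave something tame isomorphic to $D(\tilde{C})$ with $x\mapsto p$, $y\mapsto q$ up to relabeling.

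The key steps, in order: (1) Use $\partial c = b$ to eliminate $b$ wherever it appears undifferentiated. Concretely, apply the tame isomorphism $x \mapsto x + \rho_{12}(ac) + \rho_{13}c$ (and similarly adjust $y$), chosen so that the $b$-terms in $\partial x, \partial y$ get absorbed: since $\partial(\rho_{12}ac) = \rho_{12}\rho_{23}c + \rho_{12}ab$, the monomial $\rho_{12}ab$ in $\partial x$ cancels, and one checks the leftover $\rho_{12}\rho_{23}c$ can be handled or is acceptable. After this step $\partial x$ and $\partial y$ should no longer involve $b$. (2) Now examine $\partial d = a + \rho_{13} + (x + (\rho_{12}a+\rho_{13})c)\rho_{23}$; the point of the correction term $(x+(\rho_{12}a+\rho_{13})c)$ is precisely that it is (a tame modification of) $\partial x$ up to the constant $1$ and $\rho_{23}$-free terms, so that $\partial(\text{that term}\cdot\rho_{23})$ kills the crossed terms in $\partial^2 d$; after a tame substitution $d \mapsto d + (\text{correction})$ we should arrive at $\partial d = a$. (3) With $\partial d = a$ and $\partial c = b$, destabilize twice to remove $\{d,a\}$ and $\{c,b\}$; in the process any remaining occurrences of $a$ or $b$ in $\partial x,\partial y$ must already have been removed (that is what steps (1)–(2) guarantee), so what survives is generated by $x,y,\rho_{ij}$ with $\partial x = 1+\rho_{12}$ and $\partial y = 1+\rho_{34}$ after simplification — i.e. $D(\tilde{C})$ under $x\mapsto p,\ y\mapsto q$. (4) Verify all substitutions are tame (triangular in an appropriate ordering of the generators) and degree-preserving, using Lemma \ref{lem:w-degree-zero}–style potential bookkeeping and the stated gradings $|a|=0$, $|b|=0$, $|c|=|d|=|x|=|y|=1$.

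The main obstacle I expect is step (2): arranging the tame change of variables in $d$ so that the substitution is genuinely triangular (no circular dependence between $d$, $x$, $c$, $a$) while simultaneously producing $\partial d = a$ exactly, with all the $\rho_{ij}$-decorated cross terms cancelling. This requires checking that $\partial\big((x+(\rho_{12}a+\rho_{13})c)\rho_{23}\big)$, expanded via the Leibniz rule using $\partial x$ (post step (1)), $\partial a = \rho_{23}$, $\partial c = b$, and $\partial \rho_{23}=0$, equals $\rho_{13}+\rho_{23}$-free garbage that precisely matches $a+\rho_{13}-\partial d$'s discrepancy — in other words that the formula for $\partial d$ was reverse-engineered correctly. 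This is a finite but fiddly computation over $\mathbb{F}=\mathbb{Z}/2$; the sign-free setting helps, and the structure mirrors the previous lemma closely enough that it should go through. A secondary subtlety is confirming that fixing the $\rho_{ij}$ is compatible with all these moves — but since $\partial \rho_{ij}$ only involves other $\rho$'s and none of the substitutions touch the $\rho_{ij}$, this is automatic, exactly as in the preceding lemma.
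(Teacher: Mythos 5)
Your overall strategy is the paper's: perform explicit tame isomorphisms to turn $(c,b)$ and $(d,a)$ into stabilization pairs and to strip the $a,b$-terms out of $\partial x,\partial y$, then destabilize twice and relabel $x,y$ as $p,q$. Step (1) is exactly right: with $x\mapsto x+(\rho_{12}a+\rho_{13})c$ one gets $\partial(\rho_{12}ac)=\rho_{12}\rho_{23}c+\rho_{12}ab$ and $\partial(\rho_{13}c)=\rho_{12}\rho_{23}c+\rho_{13}b$, so the $\rho_{12}\rho_{23}c$ terms cancel each other and $\partial x$ becomes $1+\rho_{12}$; the symmetric substitution $y\mapsto y+c(a\rho_{34}+\rho_{24})$ gives $\partial y=1+\rho_{34}$.

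Step (2), however, cannot work as you describe it. No tame substitution $d\mapsto d+(\mathrm{correction})$ can produce $\partial d=a$, because $a$ is not a cycle: $\partial a=\rho_{23}\neq 0$, whereas $\partial^2=0$ forces $\partial d$ to be a cycle. Equivalently, the term you would need to kill, $\rho_{13}+\bigl(x+(\rho_{12}a+\rho_{13})c\bigr)\rho_{23}$, has differential $\rho_{23}$ and hence is not a boundary. The correction must go into $a$, not $d$: after step (1), apply the tame isomorphism $a\mapsto a+\rho_{13}+x\rho_{23}$ (with the \emph{new} $x$). Then $\partial a=\rho_{23}+\rho_{12}\rho_{23}+(1+\rho_{12})\rho_{23}=0$, and the defining formula $\partial d=a+\rho_{13}+\bigl(x+(\rho_{12}a+\rho_{13})c\bigr)\rho_{23}$ is \emph{precisely} the new generator $a$, so $\partial d=a$ comes for free rather than from a computation. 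With $\partial d=a$, $\partial a=0$, $\partial c=b$, $\partial b=0$, and $a,b$ absent from $\partial x,\partial y$, the two destabilizations go through and the result is $D(\tilde{C})$. So the route is salvageable and essentially the paper's, but the mechanism you propose for the key step is the one step that genuinely fails.
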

\begin{proof}
We start by applying the sequence of tame isomorphisms\begin{eqnarray*}
x & \to & x + (\rho_{12}a + \rho_{13})c \\
y & \to & y + c(a\rho_{34} + \rho_{24}) \\
a & \to & a + \rho_{13} + x\rho_{23}.
\end{eqnarray*}
to $D''$; now $\partial x = 1+\rho_{12}$, $\partial y = 1+\rho_{34}$,
$\partial d = a$, and $\partial a = 0$.  Next, we destabilize twice to remove
the  pairs of generators $(d,a)$ and $(c,b)$, and relabel $x$ and $y$ by $p$
and $q$.  We now have the DGA generated by
$p$, $q$, and $\rho_{ij}$ with $\partial p = 1+\rho_{12}$ and
$\partial q = 1+\rho_{34}$, which is precisely $D(\tilde{C})$.
\end{proof}
\begin{prop}
\label{pro:unhook-clasp}If $K'$ is obtained from $K$ by replacing
the tangle $X$ with the tangle $C$, then $Ch(K)$ and $Ch(K')$ are stable
tame isomorphic to algebras $\mathcal{A}$ and $\mathcal{A'}$, where
$\mathcal{A'}$ is obtained from $\mathcal{A}$ by adding two free generators.
If $Ch(K')$ admits an augmentation, then so does $Ch(K)$.
\end{prop}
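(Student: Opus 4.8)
The plan is to follow the proof of Theorem~\ref{thm:parallel-break} essentially verbatim, with the preceding lemma supplying the needed computation. Adjoining the two free generators $c$ and $d$ to $D(\tilde X)$ gives a tautological inclusion $\iota\colon D(\tilde X)\hookrightarrow D''$ of DGAs which fixes every $\rho_{ij}$, and the lemma provides a stable tame isomorphism $D''\cong D(\tilde C)$ that also fixes the $\rho_{ij}$. So, with $T_1=X$ and $T_2=C$, I would run the argument of Proposition~\ref{pro:tangle-d-diagram} using $\iota$ in place of the morphism $\varphi$ there: completing both fronts by the same half-diagram $K^A$ on the left---so that $A(K^A)$ and $w\colon I_n\to A(K^A)$ are common to $K$ and $K'$---and forming the pushout $\mathcal A'=A(K^A)\coprod_{I_n}D''$, associativity of pushouts identifies $\mathcal A'$ with the pushout of $D(\tilde X)\to\mathcal A$ along $\iota$, where $\mathcal A=A(K^A)\coprod_{I_n}D(\tilde X)$ is stable tame isomorphic to $Ch(K)$ by Theorem~\ref{thm:pushout-square}. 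This yields a DGA morphism $\tilde\varphi\colon\mathcal A\to\mathcal A'$, and since an augmentation of $Ch(K')$ is just a morphism $\epsilon'\colon Ch(K')\to\mathbb F$, the composite $\epsilon'\circ\tilde\varphi$ is an augmentation of $Ch(K)$, which proves the final assertion.

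To identify the two algebras, observe that since $\iota$ is the inclusion of $D(\tilde X)$ as a subalgebra of $D''=D(\tilde X)\langle c,d\rangle$, the pushout $\mathcal A'$ is simply $\mathcal A\langle c,d\rangle$, with the differentials of $c$ and $d$ obtained from the displayed formulas for $\partial c$ and $\partial d$ by substituting $w(\rho_{ij})\in A(K^A)\subset\mathcal A$ for each $\rho_{ij}$; thus $\mathcal A'$ is $\mathcal A$ with two extra free generators (both in grading $1$). On the other hand, the lemma's stable tame isomorphism $D''\cong D(\tilde C)$ fixing the $\rho_{ij}$ transports through the pushout with $A(K^A)$, since adjoining free generators to and applying tame isomorphisms to a $D$-algebra correspond to the same operations on $A(K^A)\coprod_{I_n}(-)$; hence $\mathcal A'\cong A(K^A)\coprod_{I_n}D(\tilde C)$, which is stable tame isomorphic to $Ch(K')$ by Theorem~\ref{thm:pushout-square}, up to the Legendrian Reidemeister moves used to manufacture $\tilde X$ and $\tilde C$.

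The only real obstacle is this last bookkeeping point---that $A(K^A)\coprod_{I_n}(-)$ respects stable tame equivalence on the $D$-side---together with the convention, as in the remark following Proposition~\ref{pro:tangle-d-diagram}, that $Ch(K)$ and $Ch(K')$ here denote DGAs stable tame isomorphic to the honest Chekanov--Eliashberg algebras; the rest of the argument is routine and mirrors the proof of Theorem~\ref{thm:parallel-break}.
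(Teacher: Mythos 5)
Your argument is correct and is essentially the paper's own (implicit) proof: the paper derives Proposition~\ref{pro:unhook-clasp} from the preceding lemma exactly as Theorem~\ref{thm:parallel-break} was derived from its lemma, via the inclusion $D(\tilde X)\hookrightarrow D''\cong D(\tilde C)$ and Proposition~\ref{pro:tangle-d-diagram}, and your bookkeeping about pushouts respecting the addition of free generators and tame isomorphisms on the $D$-side is the right justification. One small correction: the two new generators are not both in grading $1$ in general; since $\partial c=b$ and $\partial d=a+\cdots$, they sit in gradings $|b|+1$ and $|a|+1$, which the paper later uses as $1\pm 2r$ in the Whitehead double application (Proposition~\ref{pro:whitehead-double-polynomial}).
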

\begin{example}
\noindent \label{exa:whitehead-double}Given the Legendrian Whitehead
double $K_{\mathrm{dbl}}(k,l)$ of a front $K$ as defined by Fuchs
\cite{Fuchs:2003p674}, we can unhook the clasp and perform $k+l$
type I Reidemeister moves to remove the extra twists from the remaining
knot:

\noindent \begin{center}
\includegraphics{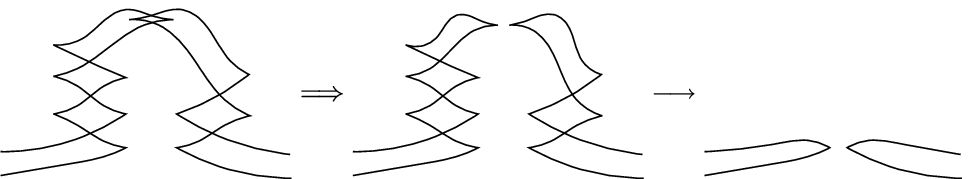}
\par\end{center}

\noindent The resulting knot is a Legendrian unknot, which admits
an augmentation, so by Proposition \ref{pro:unhook-clasp} we recover
Fuchs's result that $K_{\mathrm{dbl}}(k,l)$ does as well for all
$k,l\geq0$.\end{example}
\begin{prop}
\label{pro:whitehead-double-polynomial}Suppose that $K$ has rotation
number $r$. Then $W(K)=K_{\mathrm{dbl}}(0,0)$ admits an augmentation
with Chekanov polynomial $t+t^{2r}+t^{-2r}$.\end{prop}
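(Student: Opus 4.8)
The plan is to build a very explicit front for $W(K) = K_{\mathrm{dbl}}(0,0)$, use the clasp-unhooking machinery of Proposition \ref{pro:unhook-clasp} to pass to a Legendrian unknot, and then track exactly what happens to the Chekanov polynomial under that move. First I would write down the Whitehead double $K_{\mathrm{dbl}}(0,0)$ as in Example \ref{exa:whitehead-double}: it consists of the clasp tangle $X$ together with a copy of $K$ pushed off itself (the $2$-copy of $K$ with a clasp closing it up, with no extra twists since $k=l=0$). The key numerical input is the rotation number: the two parallel strands of the doubled $K$ carry Maslov potentials that differ appropriately, and going around the doubled knot accumulates rotation so that the two strands emanating from the clasp region, when followed around through $K$ and back, pick up a grading shift of $2r$. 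This is what will force the generators $p,q$ coming from the clasp-replaced picture to land in gradings $1$, $2r$, $-2r$ rather than all in grading $1$.

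Next I would apply Proposition \ref{pro:unhook-clasp}: replacing $X$ by the split pair of cusps $C$ and doing the isotopy in Example \ref{exa:whitehead-double} turns $W(K)$ into a Legendrian unknot $U$ (with $tb=-1$, $r=0$), and on algebras it adds two free generators. More precisely, chasing through the stable tame isomorphism $D'' \cong D(\tilde C)$ in the lemma preceding Proposition \ref{pro:unhook-clasp}, together with the pushout of Proposition \ref{pro:tangle-d-diagram}, shows that $Ch(W(K))$ is stable tame isomorphic to an algebra $\mathcal{A}$ and $Ch(U)$ is stable tame isomorphic to $\mathcal{A}' = \mathcal{A} * \mathbb{F}\langle c,d\rangle$ with $\partial c = b$, $\partial d = a + (\text{stuff})$ — but I want to run this in the direction that builds an augmentation of $W(K)$ from the (unique up to the obvious choices) augmentation of $U$. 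The unknot $U$ has the standard augmentation with Chekanov polynomial $P_\epsilon^U(t) = t$ (its DGA after destabilization is trivial, or equivalently its one right cusp contributes the fundamental class in degree $1$). Pulling this augmentation back along $\tilde\varphi : Ch(W(K)) \to Ch(U)$ gives an augmentation $\epsilon$ of $W(K)$.

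The heart of the computation is then the effect on the Poincaré polynomial of undoing the two destabilizations and the tame isomorphisms. Destabilizing removes a cancelling pair $(a,b)$ in adjacent gradings, which does not change linearized homology; so the essential content is the Mayer–Vietoris / long-exact-sequence bookkeeping across the dividing line, exactly as in Examples \ref{exa:connect-sum-A} and \ref{exa:connect-sum-D}, applied to the clasp region. Following the two strands through the clasp around the doubled knot $K$ shows the relevant interval algebra generators (the analogues of $\rho_{12}, \rho_{34}$ for the two cusps of $C$, promoted to grading $1$, and the "long" generators $\rho_{14}, \rho_{23}$ which here end up in gradings $\pm 2r$ because of the accumulated rotation around $K$) contribute cycles that survive to $H_*(Ch(W(K))^\epsilon)$. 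Carrying out the long exact sequence argument, the homology of the clasp piece $X^\epsilon$ differs from that of the split piece $C^\epsilon$ by exactly these surviving classes, yielding $P_\epsilon^{W(K)}(t) = t + t^{2r} + t^{-2r}$ — the $t$ from the fundamental class inherited from the unknot, and the $t^{2r} + t^{-2r}$ from the clasp generators whose gradings are shifted by the rotation. The main obstacle, and the step requiring genuine care rather than routine calculation, is the grading bookkeeping: one must verify that the Maslov potential on the Whitehead-double front really does assign the two clasp-adjacent strands potentials differing by $2r$ after traversing $K$, so that the new generators land in gradings $2r$ and $-2r$ (mod $2r(W(K))$, noting $r(W(K))=0$ so these are honest integers) rather than in some other degree; everything else is an instance of arguments already carried out in the paper.
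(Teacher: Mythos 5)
Your overall strategy coincides with the paper's: unhook the clasp to pass from $W(K)$ to an unknot $U$, observe that up to stable tame isomorphism $Ch(U)$ is obtained from $Ch(W(K))$ by adjoining two free generators $c,d$ in degrees $1\pm 2r$ (because the clasp crossings $a,b$ have degrees $\pm 2r$), pull the augmentation of $U$ back along the inclusion, and extract the linearized homology from the short exact sequence $0\to A^{W(K),\epsilon}\to A^{U,\epsilon'}\to C\to 0$ with $C\cong\mathbb{F}_{1-2r}\oplus\mathbb{F}_{1+2r}$. Two corrections to the execution. First, the classes in degrees $\pm 2r$ do not come from $\rho_{14}$ and $\rho_{23}$ --- those sit in degrees $|a|\pm 1=\pm 2r\pm 1$ --- but from the connecting homomorphism $H_{1\pm 2r}(C)\to H_{\pm 2r}(A^{W(K),\epsilon})$, i.e.\ from the two adjoined generators. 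Second, the detour through the Mayer--Vietoris sequence ``across the dividing line'' is both unnecessary and unworkable here: it would require knowing $H_*(A(W^A)^{\epsilon})$ for the left half of the double, which you have no independent handle on; the short exact sequence coming from the two added generators already does the whole job.

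The genuine gap is in degrees $0$ and $1$. The long exact sequence only yields $0\to H_1(A^{W(K),\epsilon})\to\mathbb{F}\to H_1(C)\to H_0(A^{W(K),\epsilon})\to 0$. When $r\neq 0$ this closes up ($H_1(C)=0$ forces $H_1\cong\mathbb{F}$, $H_0=0$), but when $r=0$ --- the case that actually matters for the rest of the paper, where the claimed answer is $t+2$ --- we have $H_1(C)\cong\mathbb{F}^2$ and the sequence is consistent with either $(H_1,H_0)=(\mathbb{F},\mathbb{F}^2)$ or $(0,\mathbb{F})$. Your phrase ``the $t$ from the fundamental class inherited from the unknot'' is not an argument: the inclusion maps $H_1(A^{W(K),\epsilon})$ \emph{into} $H_1(A^{U,\epsilon'})$, so nothing is inherited in the direction you need. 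Some external input is required to show $H_1(A^{W(K),\epsilon})\neq 0$; the paper uses \cite[Theorem 12.4]{Chekanov:2002p539}, viewing $W(K)$ as the closure of a long Legendrian knot (Sabloff's fundamental class for $W(K)$ itself, as used in Example \ref{exa:connect-sum-A}, would also serve). Without this your computation does not determine the constant term of the polynomial.
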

\begin{proof}
Let $U$ denote the Legendrian unknot. It is easy to check that if
we unhook the clasp as in Example \ref{exa:whitehead-double}, 
then the DGA stable tame isomorphic to $Ch(U)$ is obtained from $Ch(W(K))$
by adding two extra generators 
$c$ and $d$ in degrees $1\pm2r$ since the corresponding crossings $a$ and $b$
have gradings
$\pm2r$. Any augmentation $\epsilon'$ of $Ch(U)$ gives an augmentation
$\epsilon$ of $Ch(W(K))$ by composition with the inclusion
$\iota: Ch(W(K)) \hookrightarrow Ch(U)$, and $\epsilon'$
must have Chekanov polynomial $P_{\epsilon'}(t)=t$. Furthermore,
$\iota$ induces an inclusion on the linearizations
$A^{W(K),\epsilon}\hookrightarrow A^{U,\epsilon'}$ whose cokernel
is the chain complex $C=\mathbb{F}(c+\epsilon(c))\oplus\mathbb{F}(d+\epsilon(d))$.
Note that the differential on $C$ must be trivial since $|c|-|d|$
is even, and so $H_{*}(C)\cong\mathbb{F}_{1-2r}\oplus\mathbb{F}_{1+2r}$
where the subscripts denote degrees.

The short exact sequence of chain complexes \[
0\to A^{W(K),\epsilon}\to A^{U,\epsilon'}\to C\to0\]
gives a long exact sequence in homology, so for example the sequence
\[
H_{i+1}(A^{U,\epsilon'})\to H_{i+1}(C)\to H_{i}(A^{W(K),\epsilon})\to H_{i}(A^{U,\epsilon'})\]
is exact, and thus when $i\not=0,1$ we have $H_{i}(A^{W(K),\epsilon})\cong H_{i+1}(C)$.
In particular, if $i\not\in\{0,1,\pm2r\}$ it follows that $H_{i}(A^{W(K),\epsilon})=0$;
and if $r\not=0$ then $H_{\pm2r}(A^{W(K),\epsilon})\cong H_{1\pm2r}(C)\cong\mathbb{F}$.
We also get an exact sequence \[
0\to H_{1}(A^{W(K),\epsilon})\to\mathbb{F}\to H_{1}(C)\to H_{0}(A^{W(K),\epsilon})\to0\]
since $H_{2}(C)\cong H_{0}(A^{U,\epsilon'})\cong0$ and $H_{1}(A^{U,\epsilon'})\cong\mathbb{F}$. 

By considering $W(K)$ as the closure of a long Legendrian knot, \cite[Theorem 12.4]{Chekanov:2002p539}
shows that the homology group $H_{1}(A^{W(K),\epsilon})$ must be
nontrivial. Thus the injection $H_{1}(A^{W(K),\epsilon})\to\mathbb{F}$
in the last exact sequence is an isomorphism, hence the map $H_{1}(C)\to H_{0}(A^{W(K),\epsilon})$
must be an isomorphism as well. But $H_{1}(C)$ is zero if $r\not=0$
and $\mathbb{F}^{2}$ otherwise, so this determines $H_{0}(A^{W(K),\epsilon})$
and our computation of $H_{*}(A^{W(K),\epsilon})$ is complete; in
particular, its Poincar\'{e} polynomial is $t+t^{2r}+t^{-2r}$, as desired.\end{proof}
\begin{prop}
Suppose that $r=r(K)$ is nonzero. Then every augmentation of $W(K)$
has Chekanov polynomial $t+t^{2r}+t^{-2r}$.\end{prop}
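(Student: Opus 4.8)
\emph{The plan} is to show that when $r\neq 0$ the hypothesis forces \emph{every} augmentation of $Ch(W(K))$ to be the restriction of an augmentation of $Ch(U)$, at which point the homological computation of Proposition~\ref{pro:whitehead-double-polynomial} applies unchanged. Recall from that proof that unhooking the clasp realizes $Ch(U)$ as a DGA $\mathcal{A}'$ obtained from a DGA $\mathcal{A}$ stable tame isomorphic to $Ch(W(K))$ by adjoining two free generators $c$ and $d$ in degrees $1+2r$ and $1-2r$, whose differentials $\partial c$ and $\partial d$ are therefore homogeneous of degrees $2r$ and $-2r$. Since stable tame isomorphism preserves the set of Chekanov polynomials, it is enough to prove that every augmentation of $\mathcal{A}$ has Chekanov polynomial $t+t^{2r}+t^{-2r}$. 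So I would fix an arbitrary augmentation $\epsilon$ of $\mathcal{A}$ and set $\epsilon'\colon\mathcal{A}'\to\mathbb{F}$ to be the algebra homomorphism extending $\epsilon$ with $\epsilon'(c)=\epsilon'(d)=0$.

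\emph{First I would verify} that $\epsilon'$ is an augmentation. It is an algebra map vanishing on every generator of pure nonzero degree: the generators of $\mathcal{A}$ are killed because $\epsilon$ is an augmentation, and $c,d$ are killed because their degrees $1\pm 2r$ are nonzero when $r\neq 0$. So it remains only to check $\epsilon'\circ\partial=0$ on generators. On a generator $v$ of $\mathcal{A}$ this is inherited from $\epsilon$, since $\partial v\in\mathcal{A}$ and is unchanged in $\mathcal{A}'$. On $c$ and $d$ it is automatic: $\partial c$ and $\partial d$ are homogeneous of the nonzero degrees $2r$ and $-2r$, hence are killed by any augmentation of $\mathcal{A}'$, in particular by $\epsilon'$ once we know it is an algebra map killing nonzero-degree generators. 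Thus $\epsilon$ extends to the augmentation $\epsilon'$ of $\mathcal{A}'\cong Ch(U)$.

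\emph{Then} I am in exactly the setting of Proposition~\ref{pro:whitehead-double-polynomial}: the short exact sequence of linearized complexes
\[
0\to\mathcal{A}^{\epsilon}\to(\mathcal{A}')^{\epsilon'}\to C\to 0
\]
with $C=\mathbb{F}c\oplus\mathbb{F}d$ carrying trivial differential (because $|c|-|d|=4r$ is even and nonzero), so $H_*(C)\cong\mathbb{F}_{1-2r}\oplus\mathbb{F}_{1+2r}$. The associated long exact sequence, together with the facts that $H_*\big((\mathcal{A}')^{\epsilon'}\big)=H_*\big(Ch(U)^{\epsilon'}\big)$ has Poincar\'{e} polynomial $t$ and that $H_1(\mathcal{A}^{\epsilon})\neq 0$ by \cite[Theorem~12.4]{Chekanov:2002p539}, pins down $H_*(\mathcal{A}^{\epsilon})$ and yields $P_{\epsilon}(t)=t+t^{2r}+t^{-2r}$ word for word as in the earlier argument.

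\emph{The main (and essentially only) new point} is the extension step, and the one thing to be careful about there is the appeal to $\deg(\partial c)=2r\neq 0$ and $\deg(\partial d)=-2r\neq 0$: this is precisely where $r\neq 0$ is used, and it is also why the conclusion can fail when $r=0$, since then $\partial c,\partial d$ have degree $0$ and $\epsilon(\partial c)=\epsilon(\partial d)=0$ become genuine constraints that need not hold for a given $\epsilon$.
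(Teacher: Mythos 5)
Your proof is correct, but it reaches the key intermediate claim --- that every augmentation of $Ch(W(K))$ is the restriction of an augmentation of $Ch(U)$ --- by a different mechanism than the paper. The paper stays inside the bordered framework: it splits $W(K)$ into $W^{A}\cup\tilde{X}$ and $U$ into $W^{A}\cup\tilde{C}$, uses the grading constraints (valid only when $r\neq 0$) to show that $D(\tilde{X})$ and $D(\tilde{C})$ each admit a \emph{unique} augmentation and that these agree on the $\rho_{ij}$, and then invokes Theorem \ref{thm:pushout-square} to conclude that augmentations of $W(K)$ and of $U$ are both in bijection with the same set of augmentations of $A(W^{A})$. You instead work with the glued-up algebras directly: since $\mathcal{A}'\cong Ch(U)$ is obtained from $\mathcal{A}\cong Ch(W(K))$ by adjoining free generators $c,d$ in degrees $1\pm 2r$, and $\partial c,\partial d$ are homogeneous of the nonzero degrees $\pm 2r$, the extension of any $\epsilon$ by $\epsilon'(c)=\epsilon'(d)=0$ is automatically an augmentation. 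Your argument is shorter and more elementary, avoiding any re-examination of the half-diagram algebras; the paper's version yields slightly more (an actual bijection of augmentation sets, localized to the clasp tangle, in the spirit of the rest of the section), but for the statement at hand the two are equally conclusive, and both correctly isolate $r\neq 0$ as the point where the degree obstruction appears and both then defer to the homological computation of Proposition \ref{pro:whitehead-double-polynomial}.
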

\begin{proof}
Let $W(K)$ be divided into left half $W^{A}$ and right half $\tilde{X}$.
Then the elements of $D(\tilde{X})$ have gradings $|x|=|y|=1$; $|a|,|b|=\pm2r$;
$|\rho_{12}|=|\rho_{34}|=0$; $|\rho_{13}|=|\rho_{24}|=|a|$; and
$|\rho_{23}|=|a|-1$ and $|\rho_{14}|=|a|+1$. Since $\rho_{12}$
and $\rho_{34}$ are the only generators in grading $0$, all others
must be in $\ker(\epsilon_{\tilde{X}})$ for any augmentation $\epsilon_{\tilde{X}}$;
and then from $\epsilon_{\tilde{X}}(\partial x)=\epsilon_{\tilde{X}}(\partial y)=0$
we get $\epsilon_{\tilde{X}}(\rho_{12})=\epsilon_{\tilde{X}}(\rho_{34})=1$. 

If we replace $\tilde{X}$ with $\tilde{C}$, so that we have a Legendrian
unknot $U$ divided into $W^{A}$ and $\tilde{C}$, the Maslov potential
of each strand remains unchanged, so $|\rho_{ij}|$ can still only
be nonzero for $\rho_{12}$ and $\rho_{34}$, and then $\epsilon_{\tilde{C}}(\partial p)=\epsilon_{\tilde{C}}(\partial q)=0$
forces $\epsilon_{\tilde{C}}(\rho_{12})=\epsilon_{\tilde{C}}(\rho_{34})=1$
as well. In particular, both $D(\tilde{X}$) and $D(\tilde{C})$ have
a unique augmentation, and these take the same values on the elements
$\rho_{ij}$, so an augmentation of $A(W^{A})$ extends to an augmentation
of $W(K)$ iff it extends to an augmentation of $U$. Thus every augmentation
$\epsilon$ of $Ch(W(K))$ is the pullback of one on $Ch(U)$: construct
$\epsilon':Ch(U)\to\mathbb{F}$ by setting $\epsilon'(v)=\epsilon(v)$
for every vertex $v$ of $W^{A}$ and $\epsilon'(v)=0$ on the vertices
of $\tilde{C}$, and then $\epsilon$ is exactly the composition $Ch(W(K))\hookrightarrow Ch(U)\stackrel{\epsilon'}{\to}\mathbb{F}$.
But we showed in the proof of Proposition \ref{pro:whitehead-double-polynomial}
that such an augmentation must have Chekanov polynomial $t+t^{2r}+t^{-2r}$,
and so $W(K)$ cannot have any other Chekanov polynomials.
\end{proof}
On the other hand, when $r(K)=0$, we can ask the following:
\begin{question}
\label{con:augs-of-double}Suppose $K$ is a Legendrian knot with $r(K)=0$.
Does the Whitehead double of $K$ have Chekanov polynomials other than $t+2$?
\end{question}
In particular, this has been checked using a program written in 
Sage \cite{Stein:sage} for all but two of the fronts
in Melvin and Shrestha's table \cite{Melvin:2005p636}, which includes
one $tb$-maximizing front for each knot up through 9 crossings and
their mirrors. The answer is yes for every front which admits an augmentation
except the Legendrian unknot, and no for every front which does not except
for $m(9_{42})$.  (The unknown cases are $m(8_{5})$ and $m(9_{30})$,
neither of which admits an augmentation.)

In the case of $m(9_{42})$, whose extra Chekanov polynomial is $t^{2}+2t+2+t^{-1}+t^{-2}$,
we note that the Kauffman bound on $tb$ is not tight; equivalently, this 
knot does not admit an ungraded augmentation \cite{Rutherford}.
This is the only
such knot up to nine crossings for which a $tb$-maximizing representative
has $r=0$ \cite{Ng:MR1852765}, so
we speculate that these phenomena are related. (The other knot which
does not achieve the Kauffman bound is the $(4,-3)$ torus knot $m(8_{19})$,
for which $\overline{tb}=-12$ and so $r$ must be odd.) On the other
hand, the $m(10_{132})$ representative with $tb=-1$ and $r=0$ in
\cite[Figure 7]{Ng:MR2186113} has no Chekanov polynomials other than $t+2$
even though it does not admit an ungraded augmentation.

\section{Augmentations of Whitehead doubles}

In this section we prove the following result, which answers Question
\ref{con:augs-of-double} for Legendrian knots $K$ with augmentations
satisfying $P_{\epsilon}(t)\not=t$:
\begin{thm}
\label{thm:extra-double-aug}Let $K$ be a front with rotation number
0, and suppose that $K$ has an augmentation $\epsilon$ with Chekanov
polynomial $P_{\epsilon}(t)=t+\sum a_{i}t^{i}$. Then its Legendrian
Whitehead double $W(K)$ has an augmentation $\epsilon'$ with $P_{\epsilon'}(t)=t+2+(t+2+t^{-1})\sum a_{i}t^{i}$.
\end{thm}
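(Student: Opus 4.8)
The plan is to realize $W(K)$ as a tangle replacement on a Legendrian unknot $U$ and then run a Mayer--Vietoris argument as in Proposition \ref{pro:whitehead-double-polynomial}, but now keeping careful track of the extra homology contributed by the nontrivial augmentation $\epsilon$ of $K$. Concretely, I would first build $W(K)$ from an explicit front for $K$ by inserting the clasp tangle $X$, and apply the same ``lift the tangle to the right'' trick used in Proposition \ref{pro:unhook-clasp}, so that $W(K)$ is divided into a left half $W^A$ --- which is essentially a copy of $K$ with a cusp removed --- and the right half $\tilde X$ computed in the previous section, with $D(\tilde X)$ generated by $x,y,a,b$ and the $\rho_{ij}$. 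Since $r(K)=0$ we have $|a|=|b|=0$ here, so all the $\rho_{ij}$ appearing in $D(\tilde X)$ are in gradings $-1,0,1$, and the augmentation $\epsilon$ of $K$ determines a unique compatible augmentation $\epsilon_{\tilde X}$ with $\epsilon_{\tilde X}(\rho_{12})=\epsilon_{\tilde X}(\rho_{34})=1$, hence a well-defined augmentation $\epsilon'$ of $Ch(W(K))$ by Theorem \ref{thm:pushout-square}.

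Next I would compare with the Legendrian unknot obtained by unhooking the clasp, i.e. replacing $\tilde X$ with $\tilde C$. By the lemma preceding Proposition \ref{pro:unhook-clasp}, the DGA $D''$ obtained from $D(\tilde X)$ by adding two generators $c,d$ with $\partial c = b$, $\partial d = a + \rho_{13} + (\text{stuff})\rho_{23}$ is stable tame isomorphic to $D(\tilde C)$ by isomorphisms fixing the $\rho_{ij}$. Glued along the common left half $W^A$, this says that $Ch(U)$ is stable tame isomorphic to an algebra obtained from (an algebra stable tame isomorphic to) $Ch(W(K))$ by adjoining the two generators $c,d$ with $\partial c = b$ and $\partial d = a + w(\rho_{13}) + (\cdots)w(\rho_{23})$, where now $w(\rho_{ij})\in A(W^A)\subset Ch(W(K))$ as in the remark after Theorem \ref{thm:parallel-break}. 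Since $r(K)=0$, both $c$ and $d$ sit in grading $1$ (because $|a|=|b|=0$, unlike the $r\neq 0$ case of Proposition \ref{pro:whitehead-double-polynomial}). This gives a short exact sequence of linearized complexes
\[
0 \to A^{W(K),\epsilon'} \to A^{U,\epsilon''} \to C \to 0
\]
where $\epsilon''$ is the pulled-back augmentation of $Ch(U)$ and $C$ is a two-dimensional complex spanned by $c+\epsilon''(c)$ in degree $1$ and $d+\epsilon''(d)$ in degree $1$, with differential $\partial(d+\epsilon''(d))$ equal to the linearization of $a+w(\rho_{13})+(\cdots)w(\rho_{23})$ and $\partial(c+\epsilon''(c))$ the linearization of $b$.

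The heart of the argument is then the long exact sequence in homology. We know $H_*(A^{U,\epsilon''})$: the unknot has $P_{\epsilon''}(t)=t$, so $H_1\cong\mathbb F$ and $H_i=0$ otherwise. Since $C$ is concentrated in degree $1$, the connecting maps force $H_i(A^{W(K),\epsilon'})\cong H_i(A^{U,\epsilon''})=0$ for $i\neq 0,1,2$ --- wait, more carefully, the long exact sequence reads $\cdots\to H_i(A^{W(K),\epsilon'})\to H_i(A^{U,\epsilon''})\to H_i(C)\to H_{i-1}(A^{W(K),\epsilon'})\to\cdots$, so for $i\notin\{0,1\}$ we get $H_i(A^{W(K),\epsilon'})\cong H_{i+1}(C)$, which vanishes unless $i+1\in\{1\}$, i.e. $i=0$; hence $H_i(A^{W(K),\epsilon'})=0$ for $i\notin\{0,1\}$ and the entire content lives in the exact sequence
\[
0\to H_2(C)\to H_1(A^{W(K),\epsilon'})\to H_1(A^{U,\epsilon''})\to H_1(C)\to H_0(A^{W(K),\epsilon'})\to H_0(A^{U,\epsilon''})\to 0.
\]
With $H_2(C)=0$, $H_1(A^{U,\epsilon''})=\mathbb F$, $H_0(A^{U,\epsilon''})=0$, this already shows that the rank of $C$'s homology measures precisely the defect. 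But this only recovers $P_{\epsilon'}(t)=t$ or a shift by $\pm$ a rank-$1$ or rank-$2$ class --- it does not produce the full answer $t+2+(t+2+t^{-1})\sum a_i t^i$, which involves $\sum a_i t^i$ nontrivially. So the computation above is too coarse; the correct approach must not throw away $K$.

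The resolution, and the main obstacle, is that $W^A$ is \emph{not} a shrunk version of the unknot --- it retains all the crossings of $K$, and the maps $w(\rho_{ij})$ encode $Ch(K)$. So instead of comparing $W(K)$ to $U$, I would compare $Ch(W(K))$ directly to $Ch(K)$ via a two-step tangle analysis: first unhook to get the unknot, which is cheap, but then track how the half-diagram $W^A$ (equivalently $A(W^A)$ with its map from $I_4$) relates to $Ch(K)$. The key point is that $W^A$ is obtained from $K$ by removing a right cusp and then doubling the resulting pair of endpoints into the four points $1,2,3,4$ with $w(\rho_{13})=w(\rho_{24})=\partial x_K - 1$ (the boundary of the old cusp) and $w(\rho_{12}), w(\rho_{34})$ being the two ``halves''. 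I expect the cleanest route is to show $Ch(W(K))$ is stable tame isomorphic to an algebra built from $Ch(K)$ by adjoining six new generators $p,q,a,b,c,d$ (the $\tilde X$ vertices plus the two destabilizing generators from the clasp lemma) with differentials expressed through $\partial x_K$, and then to compute the linearized homology of this explicit model by a spectral-sequence or filtration argument in which the associated graded splits as $Ch(K)^\epsilon$ tensored with a fixed small complex. The ``fixed small complex'' should have Poincaré polynomial $(1 + t + t^{-1})$ on the part that multiplies $\sum a_i t^i$, plus an extra $t+2-t=t+2$ correction coming from the fundamental-class bookkeeping (removing the old right cusp's $t$, adding back the new clasp cusps' contributions $2$ and the $t$ from $W(K)$'s own fundamental class), giving exactly $P_{\epsilon'}(t)=t+2+(t+2+t^{-1})\sum a_i t^i$. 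The main difficulty is organizing this filtration so that the differentials on the six new generators interact correctly with $\partial$ on $Ch(K)$ --- in particular verifying that after a suitable change of generators the new differentials become $\partial a' = 0$, $\partial x' = a'$-type pairs together with three copies of the rank-one complex $\mathbb F\to\mathbb F$ twisted by $\partial x_K$ --- and then reading off the Euler-characteristic-plus-homology contribution; once that algebra is pinned down the long exact (or spectral) sequence collapses and the polynomial identity is a bookkeeping check. I also expect to invoke Sabloff duality and the nonvanishing of the fundamental class $H_1(A^{W(K),\epsilon'})$, exactly as in Proposition \ref{pro:whitehead-double-polynomial}, to resolve the one remaining extension ambiguity in degrees $0$ and $1$.
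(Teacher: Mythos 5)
Your proposal does not reach a proof, and the two halves of it pull in opposite directions. The first issue is the augmentation itself. When $r(K)=0$ the generators $a$, $b$, $\rho_{13}$, $\rho_{24}$ of $D(\tilde{X})$ all sit in degree $0$, so the equations $\epsilon_{\tilde{X}}(\partial x)=\epsilon_{\tilde{X}}(\partial y)=0$ do \emph{not} determine $\epsilon_{\tilde{X}}$ uniquely; the augmentation that actually produces $t+2+(t+2+t^{-1})\sum a_{i}t^{i}$ has $\epsilon'(a)=0$ and $\epsilon'(b)=1$. This matters because your short exact sequence $0\to A^{W(K),\epsilon'}\to A^{U,\epsilon''}\to C\to0$ only exists when $\epsilon'$ extends over the added generators $c,d$ with $\partial c=b$, i.e.\ when $\epsilon'(b)=0$; for the augmentation the theorem needs, no such extension exists, so the contradiction you correctly derive --- homology forced into degrees $0$ and $1$ --- is not a sign that the method is ``too coarse'' but a sign that you are computing the wrong augmentation. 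The paper instead passes to the $2$-copy $C(K)$, which differs from $W(K)$ by a \emph{single} extra generator $g$ in degree $1$, takes the proper augmentation of $C(K)$ induced by putting $\epsilon$ on each parallel copy of $K$ (Mishachev), and restricts it to $Ch(W(K))$; the cokernel of $A^{W(K),\epsilon'}\to A^{C(K),\epsilon'}$ is then one-dimensional in degree $1$, so the two homologies agree in all degrees $i<0$.

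The second issue is your fallback plan. The claim that $Ch(W(K))$ is stable tame isomorphic to $Ch(K)$ with six generators adjoined is false: each crossing of $K$ contributes four crossings $c_{N},c_{E},c_{S},c_{W}$ to the double and each cusp contributes several vertices, so the two algebras differ by an amount growing with the size of $K$, and no bounded adjunction relates them. What replaces your hoped-for ``$Ch(K)^{\epsilon}$ tensored with a small complex'' is Ng's splitting of the linearized complex of the $2$-copy into four subcomplexes $A_{N}\oplus A_{E}\oplus A_{S}\oplus A_{W}$ according to which components the two strands at each vertex belong to; one then shows $H_{*}(A_{N})\cong H_{*}(A_{S})\cong H_{*}(A^{K,\epsilon})$, $H_{i}(A_{W})\cong H_{i+1}(A^{K,\epsilon})$ for all $i$, and --- only in negative degrees --- $H_{i}(A_{E})\cong H_{i-1}(A^{K,\epsilon})$. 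That yields $P_{\epsilon'}^{W(K)}(t)=(t+2+t^{-1})P_{\epsilon}^{K}(t)+f(t)$ with $f$ an honest polynomial, and the endgame you anticipate (Sabloff duality forcing $f(t)=n-t^{2}-t$, then $P_{\epsilon'}^{W(K)}(-1)=tb(W(K))=1$ giving $n=1$) is indeed how the paper finishes. So your instinct about the final bookkeeping is right, but both the construction of the correct $\epsilon'$ and the structural input that makes the negative-degree computation possible are missing.
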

As a sample application, we have a new proof of the following result
of Melvin and Shrestha \cite{Melvin:2005p636}:
\begin{cor}
There are prime Legendrian knots with arbitrarily many Chekanov polynomials.\end{cor}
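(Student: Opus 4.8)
The plan is to combine Theorem~\ref{thm:extra-double-aug} with the construction of a suitable family of input knots. First I would recall that Melvin and Shrestha showed every Laurent polynomial of the form $P(t)=t+p(t)+p(t^{-1})$ with $p$ a polynomial having nonnegative integer coefficients is realized as a Chekanov polynomial; in particular, for each $N$ there is a knot $K_N$ with at least $N$ distinct Chekanov polynomials. However, those knots need not have augmentations satisfying $P_\epsilon(t)\ne t$ with controlled rotation number, so I would instead build the examples inductively using the Whitehead-doubling operation itself, since Theorem~\ref{thm:extra-double-aug} takes an augmentation $\epsilon$ of $K$ with polynomial $t+\sum a_i t^i$ to an augmentation $\epsilon'$ of $W(K)$ with polynomial $t+2+(t+2+t^{-1})\sum a_i t^i$.

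The key observation is that the map on Chekanov polynomials induced by $W(-)$, namely $Q(t)=t+\sum a_i t^i \mapsto t+2+(t+2+t^{-1})\sum a_i t^i$, is \emph{injective}: two augmentations of $K$ with distinct polynomials produce augmentations of $W(K)$ with distinct polynomials, because the summand $(t+2+t^{-1})\sum a_i t^i$ determines $\sum a_i t^i$ uniquely (divide by $t+2+t^{-1}=t^{-1}(t+1)^2$ in $\mathbb{Z}[t,t^{-1}]$, noting $(t+1)^2$ is not a zero divisor). Moreover $W(K)$ again has rotation number $0$, and the new polynomial $t+2+(t+2+t^{-1})\sum a_i t^i$ is again of the form $t+(\text{positive stuff})$, so Theorem~\ref{thm:extra-double-aug} applies again. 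Thus starting from any Legendrian knot $K_0$ with $r(K_0)=0$ admitting $m\ge 2$ augmentations with distinct Chekanov polynomials --- for instance a connected sum of several copies of a $tb$-maximizing right-handed trefoil, or the $5_2$ knot from Chekanov's original paper --- the iterated Whitehead doubles $K_0, W(K_0), W(W(K_0)), \dots$ all have at least $m$ distinct Chekanov polynomials.

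To get \emph{arbitrarily many} rather than just $m$, I would note that each application of $W$ does not decrease the number of distinct polynomials, but to strictly increase it I should instead combine augmentations: if $K$ has augmentations realizing polynomials $t+\sum a_i^{(1)}t^i,\dots,t+\sum a_i^{(m)}t^i$ and also an augmentation with $P_\epsilon(t)=t$ would be problematic, but in fact one can arrange via connected sum (using the connected-sum proposition from Section~3.2, which adds polynomials and subtracts $t$) that a single knot $K_n$ has at least $n$ distinct Chekanov polynomials and $r(K_n)=0$; applying Theorem~\ref{thm:extra-double-aug} then gives $W(K_n)$ with at least $n$ distinct polynomials as well, and these $W(K_n)$ are prime because any Whitehead double of a knot is prime. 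So the cleanest route is: take $K_n$ a connected sum of $n$ copies of a fixed knot with two distinct Chekanov polynomials (so $K_n$ has $n+1$ distinct polynomials by the additivity in Section~3.2 and has $r=0$), then $W(K_n)$ is prime with at least $n+1$ distinct Chekanov polynomials, proving the corollary.

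The main obstacle I anticipate is verifying the injectivity/primeness bookkeeping carefully: one must check that the distinct polynomials of $K_n$ really do produce distinct polynomials of $W(K_n)$ (the division-by-$(t+1)^2$ argument), that the Whitehead double of a composite knot is still prime (a standard fact about satellites with pattern of winding number $0$ in a solid torus, but worth citing), and that the rotation-number hypothesis $r(W(K_n))=0$ needed for further iteration --- if iteration is used --- indeed holds. None of these is deep, but they are the places where the argument could go wrong if stated loosely.
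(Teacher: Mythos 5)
Your route is genuinely different from the paper's. The paper starts from an \emph{arbitrary} $K_0$ with $r(K_0)=0$ and iterates the Whitehead double: Proposition \ref{pro:whitehead-double-polynomial} re-supplies the polynomial $p_1(t)=t+2$ at every stage, Theorem \ref{thm:extra-double-aug} pushes each $p_i$ forward to $p_{i+1}$, distinctness of the $p_i$ is read off from an explicit closed formula, and primeness of each iterate comes from the fact that Whitehead doubles have genus $1$. You instead manufacture the multiplicity by connected sums and apply a single Whitehead double at the end to restore primeness, using the injectivity of $\sum a_it^i\mapsto (t+2+t^{-1})\sum a_it^i$ (correct: $\mathbb{Z}[t,t^{-1}]$ is a domain) to see that distinct input polynomials give distinct output polynomials. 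The structural advantage of the paper's version is that it requires no seed knot with two distinct Chekanov polynomials; yours does, and that is where your argument currently has a hole.

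Concretely, the seed knots you name do not have two distinct Chekanov polynomials. Every augmentation of the $tb$-maximizing right trefoil has polynomial $t+2$, so the canonical augmentations of an $n$-fold connected sum of trefoils all yield the single polynomial $t+2n$; and Chekanov's two $5_2$ representatives are two \emph{different} Legendrian knots, each with one polynomial. The gap is repairable entirely within the paper: $W(\mathrm{trefoil})$ has the two distinct polynomials $t+2$ (Proposition \ref{pro:whitehead-double-polynomial}) and $3t+6+2t^{-1}$ (Theorem \ref{thm:extra-double-aug}), has $r=0$ since it admits an augmentation, and can serve as the summand, after which your connected-sum count of $n+1$ distinct polynomials and the injectivity step go through. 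Separately, ``satellites with winding-number-$0$ pattern are prime'' is not a correct statement as you give it --- a pattern contained in a ball inside the solid torus can produce a composite satellite --- and the justification you actually want is the one the paper uses: a nontrivial Whitehead double bounds a genus-$1$ Seifert surface, and since genus is additive under connected sum with each nontrivial summand contributing at least $1$, a genus-$1$ knot is prime. With those two repairs your proof is valid.
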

\begin{proof}
Let $K_{0}$ be any Legendrian knot with rotation number $0$, and
for $n\geq1$ let $K_{n}$ be the Legendrian Whitehead double of $K_{n-1}$;
then each $K_{n}$ is prime because Whitehead doubles have genus $1$.
We claim that $K_{n}$ has at least $n$ distinct Chekanov polynomials.

If we define a sequence of Laurent polynomials $p_{1}(t)=t+2$, $p_{2}(t)=3t+6+2t^{-1}$,
and so on by the formula \[
p_{n}(t)=t+2+(t+2+t^{-1})(p_{n-1}(t)-t),\]
then we can explicitly solve for $p_{n}(t)$ as \[
p_{n}(t)=\frac{2(t+2+t^{-1})^{n}+t^{2}+t-1}{t+1+t^{-1}}=t+2\sum_{k=1}^{n}{n \choose k}(t+1+t^{-1})^{k-1}\]
and so the $p_{n}$ are all distinct. But for any $n\geq1$, the polynomials
$p_{1}(t),\dots,p_{n}(t)$ are all Chekanov polynomials of $K_{n}$:
$p_{1}$ is for all $n$ by Proposition \ref{pro:whitehead-double-polynomial},
and if $p_{1},\dots,p_{i-1}$ are Chekanov polynomials of $K_{i-1}$
then Theorem \ref{thm:extra-double-aug} guarantees that $p_{2},\dots,p_{i}$
are Chekanov polynomials of $K_{i}$, so the claim follows by induction.\end{proof}
\begin{rem}
Since $p_{n}(1)=\frac{1}{3}(2\cdot4^{n}+1)$, the ranks of the corresponding
linearized homologies are all distinct as well.

If instead we take $r(K_{0})\not=0$ then $p_{1},\dots,p_{n-1}$ are
Chekanov polynomials of $K_{n}$ by applying this argument to $K'_{0}=W(K_{0})=K_{1}$
and $K'_{n-1}=K_{n}$, and since $K_{1}$ has Chekanov polynomial
$t+t^{2r}+t^{-2r}$ we get an $n$th Chekanov polynomial of degree
$2r+n-1$ and rank $\frac{1}{3}(2\cdot4^{n}+1)$ for $K_{n}$ by applying
the same recurrence to $t+t^{2r}+t^{-2r}$ a total of $n-1$ times.
Thus any $n$-fold iterated Legendrian Whitehead double has at least
$n$ distinct Chekanov polynomials.
\end{rem}
The Whitehead double differs from the $2$-copy, defined in
\cite{Mishachev:2003p676},
by a single crossing, or more precisely by replacing the tangle $\tilde{X}$
from the previous section with the tangle $\tilde{P}$:

\begin{center}
\includegraphics{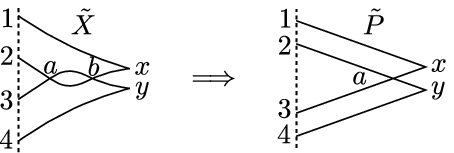}
\par\end{center}

\noindent Thus we will start by analyzing the closely related $2$-copy
$C(K)$, where we have fixed gradings so that for any two parallel
strands of $C(K)$, the top strand has Maslov potential one greater
than that of the bottom strand. (In particular, since $r(K)=0$ we
have $|a|=|b|=0$ in $\tilde{X}$, so we may assume without loss of
generality that the potentials of the strands points $1,2,3,4$ on
the dividing line are $1,0,0,-1$ in both tangles.) We will split
a linearization of $C(K)$ into four subcomplexes, compute the homology
of three and a half of these, and use this information to recover
the linearized homology of $W(K)$.

It follows from Corollary \ref{cor:half-clasp-augmentation} that
$Ch(C(K))$ is stable tame isomorphic to a DGA obtained by adding
a free generator $g$ to $Ch(W(K))$ in grading $1$. One can check
that it suffices to let $\partial g=b+1$, but we do not need this
fact.

We will assume for the rest of this section that $K$ is a fixed Legendrian
knot with $r(K)=0$ and augmentation $\epsilon:Ch(K)\to\mathbb{F}$,
and that we have a simple front for $K$.

\subsection{Constructing augmentations of $C(K)$ and $W(K)$}

For each crossing $c$ of $K$, the 2-copy $C(K)$ has four corresponding
crossings, which we will label $c_{N}$, $c_{E}$, $c_{S}$, and $c_{W}$
in clockwise order from the top. It is easy to check that the gradings
of these crossings are $|c|$, $|c|+1$, $|c|$, and $|c|-1$ respectively.
Let $K_{1},K_{2}\subset C(K)$ denote the upper and lower (in the
$z$-direction) copies of
$K$, respectively, so that both strands through $c_{N}$ belong to
$K_{1}$, both strands through $c_{S}$ belong to $K_{2}$, and $c_{E}$
and $c_{W}$ involve strands from both $K_{1}$ and $K_{2}$.

\begin{figure}
\begin{centering}
\includegraphics{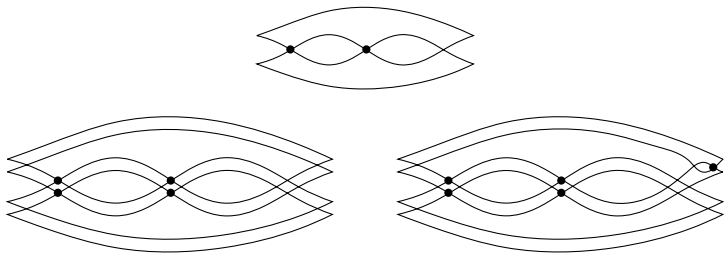}
\par\end{centering}

\caption{An augmentation $\epsilon$ of $K$ and the corresponding augmentations
$\epsilon'$ of the 2-copy and Whitehead double. The vertices $v$
with $\epsilon(v)=1$ are indicated by black dots.\label{fig:double-augs}}

\end{figure}

\begin{prop}
Define an algebra homomorphism $\epsilon':Ch(C(K))\to\mathbb{F}$
as in Figure \ref{fig:double-augs} by $\epsilon'(c_{N})=\epsilon'(c_{S})=1$
whenever $\epsilon(c)=1$, and $\epsilon'(v)=0$ for all other vertices
of $v$. Then $\epsilon'$ is an augmentation of $Ch(C(K))$.\end{prop}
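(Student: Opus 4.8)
The plan is to verify the two defining properties of an augmentation. Grading is immediate: $\epsilon'$ is nonzero only on the generators $c_N,c_S$ with $\epsilon(c)=1$, and since $\epsilon$ is an augmentation of $Ch(K)$ such a $c$ has $|c|=0$, whence $|c_N|=|c_S|=|c|=0$, so $\epsilon'$ is a graded algebra map $Ch(C(K))\to\mathbb{F}$. The content is $\epsilon'\circ\partial=0$. Call a disk $D\in\mathrm{Disk}(C(K);v)$ \emph{surviving} if $\epsilon'(\partial D)=1$; since $\epsilon'$ vanishes on every generator other than the $c_N,c_S$ with $\epsilon(c)=1$, this is equivalent to requiring that every vertex appearing in the monomial $\partial D$ be such a $c_N$ or $c_S$. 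For a crossing $v$ we then have $\epsilon'(\partial v)\equiv\#\{\text{surviving }D\in\mathrm{Disk}(C(K);v)\}\pmod 2$, and for a right cusp $v$ we have $\epsilon'(\partial v)\equiv 1+\#\{\text{surviving }D\}\pmod 2$, so it suffices to count surviving disks for each generator.

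The key geometric observation is that a surviving disk lies entirely in the upper copy $K_1$ or entirely in the lower copy $K_2$. Every strand of $C(K)$ belongs to a single copy; the two strands at a crossing $c_N$ (resp.\ $c_S$) both belong to $K_1$ (resp.\ $K_2$); and the only crossings of $C(K)$ at which a strand of $K_1$ meets a strand of $K_2$ are the $c_E$ and $c_W$. Hence the boundary of a disk passes from one copy to the other only by turning a corner at some $c_E$ or $c_W$, and a surviving disk has no such corner. The same remark rules out surviving disks with rightmost vertex $c_E$ or $c_W$: there the two arcs of $\partial D$ meeting the vertex lie on strands of different copies, so $\partial D$ must change copies an odd number of times along the rest of its loop and hence must turn at least one corner at an $E$ or $W$ crossing. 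Thus $\epsilon'(\partial c_E)=\epsilon'(\partial c_W)=0$, and for the remaining generators every surviving disk lives in the copy containing its rightmost vertex.

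It remains to count surviving disks of $\partial c_N$, of $\partial c_S$, and of $\partial r$ for $r$ a right cusp. Here I would compare with $Ch(K)$ directly. Collapsing $K_1$ onto $K$ should identify the surviving disks of $\partial c_N$ with exactly those $D\in\mathrm{Disk}(K;c)$ all of whose corners $d$ satisfy $\epsilon(d)=1$; the inverse lifts such a $D$ by turning at $d_N$ wherever $D$ turned at $d$, by running through the $2$--copy region of $d$ wherever $D$ passed straight through $d$, and similarly near cusps. Granting this, $\epsilon'(\partial c_N)\equiv\#\{D\in\mathrm{Disk}(K;c):\text{every corner }d\text{ has }\epsilon(d)=1\}\equiv\epsilon(\partial c)=0$, and $v=c_S$ is identical with $K_2$ in place of $K_1$. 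If $r$ lies over a right cusp $r_K$ of $K$, the same identification gives $\#\{\text{surviving }D\in\mathrm{Disk}(C(K);r)\}\equiv\epsilon\bigl(\sum_{D\in\mathrm{Disk}(K;r_K)}\partial D\bigr)$, which equals $1$ because $\partial r_K=1+\sum_{D}\partial D$ and $\epsilon(\partial r_K)=0$; hence $\epsilon'(\partial r)\equiv 1+1\equiv 0$. This covers every generator of $Ch(C(K))$, so $\epsilon'\circ\partial=0$ and $\epsilon'$ is an augmentation.

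The step I expect to be the main obstacle is the identification invoked in the previous paragraph. One must write down the explicit local model of the $2$--copy near each crossing and each cusp of $K$ and check that lifting an admissible disk of $K$ through these local pieces produces a \emph{unique} admissible disk of $C(K)$ — in particular that the designated rightmost vertex really is the unique rightmost point and that the leftmost point still lands on a left cusp — and, conversely, that a surviving disk of $C(K)$ projects to an admissible disk of $K$, with no surviving disks gained or lost near the $2$--copy regions. This is a finite check on the pictures, of the same flavor as Ng's combinatorial description of $Ch(K)$ for front projections, but it is where the care is concentrated; everything else is the bookkeeping above.
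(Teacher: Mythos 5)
Your proof is correct, but it takes a genuinely different route from the paper. The paper disposes of this in one line: $\epsilon'$ vanishes on every vertex whose two strands lie in different components of $C(K)$, so it is a \emph{proper} augmentation in the sense of Mishachev, and the statement is a special case of \cite[Proposition 3.3c]{Mishachev:2003p676} (proper augmentations of the $N$-copy are exactly the tuples of augmentations of the components). What you have written is essentially a self-contained proof of that special case: the observation that a ``surviving'' disk cannot turn a corner at an unaugmented crossing, hence cannot switch components, is precisely the mechanism behind Mishachev's result, and it reappears almost verbatim in the paper's later computation of $H_*(A_W^{C(K),\epsilon'})$. Your route buys independence from the reference and makes visible why mixed-component generators ($c_E$, $c_W$) contribute nothing; the paper's route buys brevity and the stronger statement about arbitrary pairs $(\epsilon_1,\epsilon_2)$. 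Two small cautions. First, your claim that ``the only crossings of $C(K)$ at which a strand of $K_1$ meets a strand of $K_2$ are the $c_E$ and $c_W$'' is not literally true: the $2$-copy also has a mixed crossing $l$ between each pair of left cusps and a mixed crossing $r$ adjacent to each pair of right cusps. This does not damage the argument --- those crossings are likewise unaugmented, so they cannot be corners of surviving disks, and the same parity argument kills the surviving disks rooted at them --- but your case analysis of generators must include $l$ and $r$ alongside $c_E$ and $c_W$. Second, the local-model verification you flag at the end (that admissible disks of $K$ with all corners augmented biject with surviving disks of $C(K)$ in a single copy, including the behavior at the cusp regions where the extra vertices $l$, $r$, $r_1$, $r_2$ live) is genuinely the only remaining content, and it is exactly what the citation to Mishachev outsources; since a surviving disk has boundary entirely on the translate $K_1$ (or $K_2$) and admissibility is insensitive to the other copy's strands passing through it, the check is routine, but it does need to be written down.
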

\begin{proof}
Since $\epsilon'$ is a ``proper'' augmentation in the sense of 
\cite{Mishachev:2003p676}, meaning that $\epsilon'(v)=0$ whenever the
strands through $v$ belong to different components of $C(K)$, this is a
special case of \cite[Proposition 3.3c]{Mishachev:2003p676}.  More generally,
any augmentations $\epsilon_1$ and $\epsilon_2$ of $K_1$ and $K_2$ uniquely
determine a proper augmentation of $C(K)$.
\end{proof}
Let $\mathcal{A} \cong Ch(C(K))$ be the DGA which is 
is constructed by adding a generator $g$ to $Ch(W(K))$ as in Corollary
\ref{cor:half-clasp-augmentation}.  The
inclusion $Ch(W(K))\hookrightarrow\mathcal{A}$ induces an augmentation
of $Ch(W(K))$ which we will also call $\epsilon'$; one can show
that it satisfies $\epsilon'(a)=0$ and $\epsilon'(b)=1$, and is
defined identically to the augmentation of $Ch(C(K))$ on all other
vertices.

The inclusion $Ch(W(K))\hookrightarrow\mathcal{A}$ induces a map
on the linearized complexes which we can extend to a short exact sequence
as before, \[
0\to A^{W(K),\epsilon'}\to A^{C(K),\epsilon'}\to\mathbb{F}_{1}\to0,\]
where we are using $A^{C(K),\epsilon'}$ to mean the linearization of
$\mathcal{A}$ since
they have the same homology, and the cokernel $\mathbb{F}_{1}$ is
generated in degree $1$ by $g$. The corresponding long exact sequence
in homology tells us that $H_{i}(A^{W(K),\epsilon'})\cong H_{i}(A^{C(K),\epsilon'})$
for $i\not=0,1$, and in particular for all $i<0$.

\subsection{The linearized homology of $C(K)$}

Mishachev showed in \cite{Mishachev:2003p676} that the DGA of
$C(K)$ splits as $Ch(C(K))=\bigoplus_{i\in\mathbb{Z}}\mathcal{A}_{i}$,
where a vertex $v$ is in $\mathcal{A}_{-1}$ if the top and bottom
strands through $v$ are in $K_{2}$ and $K_{1}$ respectively, in
$\mathcal{A}_{1}$ if the top and bottom strands are in $K_{1}$ and
$K_{2}$ respectively, and in $\mathcal{A}_{0}$ if both strands through
$v$ belong to the same component; and if $v\in\mathcal{A}_{i}$ and
$v'\in\mathcal{A}_{i'}$, then $vv'\in\mathcal{A}_{i+i'}$. This splitting
extends to the linearization with respect to the augmentation $\epsilon'$,
but in fact we can split the linearized complex even more:
\begin{prop}[\cite{Ng:2003p540}]
\label{pro:splitting-2-copy}There is a splitting \[
A^{C(K),\epsilon'}\cong\bigoplus_{d\in\{N,E,S,W\}}A_{d}^{C(K),\epsilon'}\]
where the N, E, S, and W subcomplexes are generated by the vertices
whose top and bottom strands belong to components $(K_{1},K_{1})$,
$(K_{1},K_{2})$, $(K_{2},K_{2})$, and $(K_{2},K_{1})$, respectively.\end{prop}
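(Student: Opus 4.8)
The plan is to show that the induced differential on $A^{C(K),\epsilon'}$ preserves each of the four summands; the decomposition as $\mathbb{F}$-vector spaces is automatic, since every vertex of $C(K)$ has a well-defined top and bottom strand, each lying in $K_1$ or $K_2$, so the vertices are partitioned into the four classes $(K_1,K_1)$, $(K_1,K_2)$, $(K_2,K_2)$, $(K_2,K_1)$, which we name $N,E,S,W$. In Mishachev's grading $Ch(C(K))=\bigoplus_i\mathcal{A}_i$, the $E$ class spans $\mathcal{A}_1$, the $W$ class spans $\mathcal{A}_{-1}$, and the $N$ and $S$ classes together span $\mathcal{A}_0$; since $\partial$ respects this $\mathbb{Z}$-grading on $Ch(C(K))$, and hence on its linearization, the $E$- and $W$-summands are already subcomplexes, and the real content is that $\partial$ does not mix $N$ with $S$ inside the linearization of $\mathcal{A}_0$.

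I would record three facts about the $2$-copy. (i) The augmentation $\epsilon'$ is \emph{proper}: it vanishes on every vertex whose two strands lie in different components of $C(K)$, so it is supported only on $N$- and $S$-vertices, and in particular every $E$- or $W$-vertex lies in $\ker\epsilon'$. (ii) Since $C(K)=K_1\sqcup K_2$, every left cusp of $C(K)$ has both of its strands in the same component. (iii) As one traverses the boundary of an admissible disk of $C(K)$, the component of $C(K)$ currently being followed changes only at a corner, and it changes at a corner $u$ exactly when $u$ is an $E$- or $W$-vertex; it never changes at a left cusp.

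Now fix a vertex $v$ of type $N$ and a monomial of $\partial v$ that contributes to the differential of $v$ in $A^{C(K),\epsilon'}$: it has one ``free'' corner $w$, and every other corner $u$ satisfies $\epsilon'(u)=1$. By (i) each such $u$ is an $N$- or $S$-vertex, hence by (iii) a non-changing corner, so along the boundary of the underlying disk $D$ the component can change only at $w$, and by (ii) nowhere else. Traversing $\partial D$ from just after $v$ around to just before $v$, the component at the start equals the component of one strand at $v$ and the component at the end equals that of the other; both are $K_1$ because $v$ has type $N$, so the number of component changes is even. Since this number is also at most one, it is zero; hence $\partial D$ lies entirely in $K_1$, so $w$ joins two strands of $K_1$ and is therefore a vertex of type $N$. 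Thus $\partial$ maps the $N$-summand into itself, and the same argument with $K_2$ in place of $K_1$ handles the $S$-summand; together with the $E$- and $W$-summands this proves the splitting.

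The calculations behind (ii) and (iii) are routine, amounting to inspecting the local models of the $2$-copy at the crossings, left cusps, and right cusps of $K$, the only wrinkle being that an admissible disk may be non-smooth at left cusps other than its leftmost point, which is harmless by (ii). I expect the genuine bookkeeping burden to be in matching the $E$- and $W$-classes with Mishachev's $\mathcal{A}_{\pm1}$ and in invoking that $\partial$ preserves his grading, since the parity argument above does not on its own determine whether the free corner of a type-$E$ vertex's linearized differential is of type $E$ or of type $W$; that finer fact is precisely what the $\mathbb{Z}$-grading supplies.
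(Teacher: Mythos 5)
Your argument is correct, and in fact the paper offers no proof of this statement at all --- it is attributed to Ng \cite{Ng:2003p540} (the link grading of the $2$-copy together with properness of $\epsilon'$), so your write-up supplies an argument the paper leaves to a citation. Your three facts and the parity count are sound: augmented corners are $N$- or $S$-type by properness, so at most one corner of a contributing disk can switch components, and the switch count must have the parity dictated by the two strands at the initial vertex; this correctly forces the free corner of an $N$- (resp.\ $S$-) vertex to be $N$ (resp.\ $S$). The only stylistic remark is that your hybrid --- Mishachev's $\mathbb{Z}$-grading for the $E/W$ summands plus a mod-$2$ count for $N/S$ --- can be unified: if instead of counting component changes mod $2$ you track the ordered sequence of components along $\partial D$ (a ``composable word'' from the component of the outgoing strand at $v$ to that of the incoming strand), then properness kills every term except the one whose single free corner carries the same ordered pair of components as $v$, which gives all four summands, including the $E$ versus $W$ distinction, in one stroke without invoking the preservation of the $\mathbb{Z}$-grading as a separate input. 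That uniform version is essentially Ng's argument; yours is equivalent and complete as written.
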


The cusps and crossings of $K$ determine several types of vertices of $C(K)$:
\begin{center}
\includegraphics{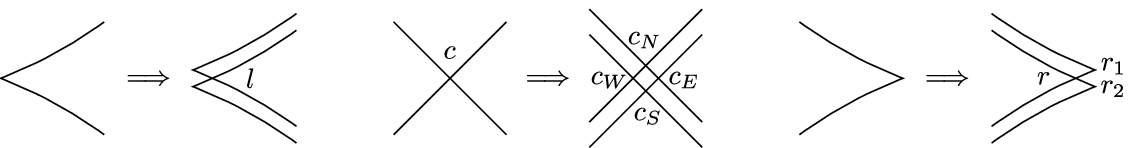}
\par\end{center}
In this picture, the crossings $l$, $r$, $r_1$, and $r_2$ belong to the E, W,
N, and S subcomplexes, respectively, and each $c_{d}$ belongs to
$A^{C(K),\epsilon'}_{d}$.

\begin{lem}
There is an isomorphism $H_{i}(A_{W}^{C(K),\epsilon'})\cong H_{i+1}(A^{K,\epsilon})$
for all $i\in\mathbb{Z}$.\end{lem}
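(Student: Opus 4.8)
The plan is to produce an explicit degree‑lowering isomorphism of chain complexes $A^{K,\epsilon}\xrightarrow{\ \sim\ }A_W^{C(K),\epsilon'}$; taking homology then yields $H_i(A_W^{C(K),\epsilon'})\cong H_{i+1}(A^{K,\epsilon})$ at once. The first step is to match generators. By Proposition~\ref{pro:splitting-2-copy} the generators of $A_W^{C(K),\epsilon'}$ are precisely the vertices of $C(K)$ whose top strand lies in $K_2$ and whose bottom strand lies in $K_1$; from the local models these are the crossings $c_W$ coming from crossings $c$ of $K$, together with one crossing of type $r$ near each right cusp $y$ of $K$, which I will call $r_y$ (the left cusps of $K$, the crossings $c_N,c_S,c_E$, and the auxiliary crossings $r_1,r_2$ near right cusps all belong to the other three subcomplexes). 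So $c\mapsto c_W$ and $y\mapsto r_y$ is a bijection between the free generators of $A^{K,\epsilon}$ and those of $A_W^{C(K),\epsilon'}$. A routine check of Maslov potentials in the $2$‑copy --- using $r(K)=0$, so that the potentials at the four marked points may be taken to be $1,0,0,-1$ --- gives $|c_W|=|c|-1$ and $|r_y|=|y|-1=0$, so this bijection lowers degree uniformly by $1$; equivalently $A_W^{C(K),\epsilon'}$ in degree $i$ is spanned by the vertices of $K$ of degree $i+1$.

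The second, and main, step is to show that this bijection intertwines the differentials. The conceptual device is the projection $\pi\colon C(K)\to K$ collapsing the thin strip between the two parallel copies $K_1,K_2$. A nonzero monomial of $\partial_W^{C(K),\epsilon'}(c_W)$ comes from a disk $D'\in\mathrm{Disk}(C(K);c_W)$ with exactly one corner $v'$ not augmented by $\epsilon'$; because the monomial $\partial D'$ lies in the off‑diagonal summand $\mathcal{A}_{-1}$, the corner $v'$ must itself lie in the $W$ subcomplex and every augmented corner must lie in $\mathcal N$ or $\mathcal S$, i.e.\ be some $c_N$ or $c_S$ with $\epsilon'(c_N)=\epsilon'(c_S)=\epsilon(c)$; moreover the local model near a right cusp shows that $D'$ never turns at an $r_1$ or $r_2$ crossing. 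Pushing $D'$ forward under $\pi$ produces a disk $D\in\mathrm{Disk}(K;c)$, where $c$ is the vertex of $K$ corresponding to $c_W$, with one distinguished corner $\pi(v')=v$ and all remaining corners crossings of $K$ on which $\epsilon$ takes value $1$; conversely each disk of $K$ contributing a term to $\partial^{K,\epsilon}(c)$ lifts uniquely to such a $D'$ in the $W$ strip. (When $c$ is a right cusp $y$ the same analysis applies to disks with rightmost vertex $r_y$; the constant term $1$ in $\partial y$ does not survive linearization and plays no role.) Hence the monomials of $\partial_W^{C(K),\epsilon'}(c_W)$ and of $\partial^{K,\epsilon}(c)$ correspond termwise, so the generator bijection is a chain isomorphism, and the lemma follows.

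The part that genuinely requires care --- essentially the only nonformal input --- is this local analysis in the $2$‑copy at the cusps of $K$: one must verify that each right cusp of $K$ contributes exactly one $W$‑crossing $r_y$ of grading $0$, that no $W$‑disk can turn at the auxiliary crossings $r_1,r_2$ created there, and that the correspondence $D'\leftrightarrow D$ is a genuine bijection respecting embeddedness and the one‑quadrant condition at corners in both directions. This bookkeeping is of exactly the kind carried out in \cite{Mishachev:2003p676} and \cite{Ng:2003p540} when the $2$‑copy DGA and the splitting of Proposition~\ref{pro:splitting-2-copy} are analyzed; the present lemma is really the statement that the $W$ (that is, $\mathcal{A}_{-1}$) summand recovers the linearized homology of $K$ up to a shift, in the spirit of Sabloff's duality \cite{Sabloff:2006p526}. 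The remaining verifications --- uniformity of the grading shift and the matching of $\epsilon$‑weights on augmented corners --- are straightforward once the local pictures are in place.
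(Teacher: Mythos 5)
Your argument is correct and follows essentially the same route as the paper: identify the generators of $A_{W}^{C(K),\epsilon'}$ with those of $A^{K,\epsilon}$ shifted down by one in degree, and match the linearized differentials by a bijection between disks for $c_{W}$ having a single unaugmented ($W$-type) corner and pairs consisting of a disk for $c$ in $K$ together with a distinguished corner all of whose companions are augmented. The one point you state more tersely than the paper is the converse direction: a disk of $K$ with all corners augmented contributes $\sum_{j}c'_{j}$ and lifts to one strip disk per term (with corners of type $S$ before the distinguished corner, $W$ at it, and $N$ after), which is exactly the ``unique lift of each term'' you assert.
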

\begin{proof}
The subcomplex $A_{W}^{C(K),\epsilon'}$ is generated by crossings $c_{W}$
corresponding to crossings $c$ of $K$, as well as crossings $r$ adjacent
to pairs of right cusps, so the generators of the $i$th graded component of
$A_{W}^{C(K),\epsilon'}$ are in bijection with the generators of
$(A^{K,\epsilon})_{i+1}$.  Let $v_{W}$ and $v$ denote a generator of
$A_{W}^{C(K),\epsilon'}$ and the corresponding generator of $A^{K,\epsilon}$,
respectively.

No disk $D$ contributing to $\partial^{\epsilon'}v_{W}$
can have more than one unaugmented corner, so in particular as we travel along
$\partial D$ we cannot switch between components of $C(K)$ more than once.
Since $\partial D$ leaves $v_{W}$ along $K_{2}$
and returns along $K_{1}$ when traveling counterclockwise, it must switch at
some crossing $c'_{W}$, which is
then the unique unaugmented corner of $D$ and so $D$ contributes $c'_{W}$ to
$\partial^{\epsilon'}v_{W}$.

Now consider the linearized differential $\partial^{\epsilon}v\in A^{K,\epsilon}$.
Each disk $D'$ with initial vertex $v$ and a single unaugmented
corner $c'$ corresponds to a unique disk $D$ for $v_{W}$ with corner
$c'_{W}$ as described above. On the other hand, if every corner $c'_{j}$
of $D'$ is augmented then $D'$ contributes $\sum c'_{j}$ to
$\partial^{\epsilon}v$.
In this case $D'$ corresponds to one disk $D_{j}$ which contributes to
$\partial^{\epsilon'} v_{W}$
for each corner $c'_{j}$: this is the disk $D_{j}$ with augmented
corners $(c'_{k})_{S}$ for all $k<j$, then an unaugmented corner
at $(c'_{j})_{W}$, and then augmented corners $(c'_{k})_{N}$ for all
$k>j$, hence $D_{j}$ contributes $(c'_{j})_{W}$ to $\partial^{\epsilon'}v_{W}$
for each $j$ and the total contribution is $\sum(c'_{j})_{W}$.  Finally, if
$v$ is a right cusp then $\partial v$ contains an extra $1$ which does not
appear in $\partial v_{W}$, but this does not contribute to the linearization
$\partial^{\epsilon} v$.

We conclude that if $\partial^{\epsilon}v=\sum c'_{j}$ then
$\partial^{\epsilon'}v_{W}=\sum(c'_{j})_{W}$
for all $v$, and the desired isomorphism follows immediately.\end{proof}
\begin{lem}
Both $H_{*}(A_{N}^{C(K),\epsilon'})$ and $H_{*}(A_{S}^{C(K),\epsilon'})$
are isomorphic to $H_{*}(A^{K,\epsilon})$.\end{lem}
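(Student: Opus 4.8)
The plan is to run the argument of the previous lemma almost verbatim; the only structural difference is that for the $N$ and $S$ subcomplexes there is no degree shift. First I would record the generating sets. The subcomplex $A_{N}^{C(K),\epsilon'}$ is freely generated by the crossings $c_{N}$ attached to crossings $c$ of $K$ together with the vertices $r_{1}$ attached to the right cusps of $K$, and since $|c_{N}|=|c|$ and $|r_{1}|=1$ these generators are in grading-preserving bijection with the generators of $A^{K,\epsilon}$ (in place of the shift $i\mapsto i+1$ of the $W$ case). Write $v_{N}\leftrightarrow v$ for a pair of corresponding generators.

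Next I would check that this bijection intertwines the linearized differentials. A disk $D$ contributing to $\partial^{\epsilon'}v_{N}$ leaves $v_{N}$ and returns to it along strands of $K_{1}$, since both strands through $v_{N}$ lie in $K_{1}$; hence the boundary of $D$ switches between the components $K_{1}$ and $K_{2}$ an even number of times. Every such switch occurs at a corner lying on a crossing of type $E$ or $W$ (or on one of the crossings $l,r$ adjacent to a cusp), and all of these crossings are unaugmented because $\epsilon'$ is a proper augmentation; so a disk with at most one unaugmented corner cannot switch components at all. Thus $\partial D$ stays in $K_{1}$, every corner of $D$ is of the form $c'_{N}$, and under the identification of $K_{1}$ with a parallel copy of $K$ the disk $D$ corresponds to an admissible disk $D'$ for $v$ in $K$; conversely each admissible disk for $v$ in $K$ lifts to such a $D$. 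Since $\epsilon'(c'_{N})=\epsilon(c')$ for every crossing $c'$, and since the constant term of $\partial r_{1}$ is killed by linearization just as for the right cusps of $K$, the contribution of $D$ to $\partial^{\epsilon'}v_{N}$ is carried to the contribution of $D'$ to $\partial^{\epsilon}v$. Therefore the bijection is an isomorphism of graded chain complexes $A_{N}^{C(K),\epsilon'}\cong A^{K,\epsilon}$, so the homologies agree; replacing $K_{1}$ by $K_{2}$ throughout gives the statement for $A_{S}^{C(K),\epsilon'}$.

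The step I expect to be the main obstacle is the claim that a contributing disk never crosses into the other copy. One has to verify that the boundary of an admissible disk can pass from $K_{1}$ to $K_{2}$ (or back) only at a corner lying on one of the mixed crossings $c_{E},c_{W},l,r$ — a corner at a pure crossing $c_{N}$ or at a vertex $r_{1}$, and any passage through a left cusp, keeps the boundary inside a single component — and then that these mixed crossings carry value $0$ under any proper augmentation. Given this, the rest is the routine bookkeeping of transferring admissible disks between $K$ and its parallel copy $K_{1}\subset C(K)$ (one need only take the neighborhoods in the corner condition smaller than the spacing of the clustered crossings $c_{N},c_{E},c_{S},c_{W}$ of $C(K)$) and observing that the linearization kills the constant term of $\partial r_{1}$ exactly as it does for a right cusp of $K$.
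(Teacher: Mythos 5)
Your argument is correct. For this lemma the paper does not give its own proof at all --- it simply refers to the discussion of the $2$-copy in Ng's \emph{Computable Legendrian invariants} --- so there is nothing in-paper to compare against line by line; but what you have written is precisely the right adaptation of the paper's explicit proof for the $W$ subcomplex, with the degree shift removed. Both essential points are present and correctly justified: (i) the boundary of a disk based at a vertex of the $N$ (resp.\ $S$) subcomplex must change components an even number of times, every change occurs at a mixed crossing ($c_E$, $c_W$, $l$, or $r$), and all of these are unaugmented because $\epsilon'$ is proper, so a disk with at most one unaugmented corner never leaves $K_1$ (resp.\ $K_2$); and (ii) such disks correspond bijectively to admissible disks of $K$, with $\epsilon'(c'_N)=\epsilon(c')$ and with the constant term of $\partial r_1$ killed by linearization, giving an isomorphism of graded chain complexes since $|c_N|=|c|$ and $|r_1|=1$. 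The one step you flag as delicate is handled correctly by the observation that at the nearby mixed crossings of a cluster the lifted boundary either misses the crossing or passes straight through it along a single strand, so the disk is not singular there and the one-quadrant condition is only tested at $c'_N$, where it is inherited from the original disk.
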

\begin{proof}
See \cite[Section 2.5]{Ng:2003p540}, in particular the discussion after
Definition 2.20.
\end{proof}
The only remaining subcomplex is $A_{E}^{C(K),\epsilon'}$. This complex
is more complicated than the others, but it is still accessible in
negative degree:
\begin{lem}
There is an isomorphism $H_{i}(A_{E}^{C(K),\epsilon'})\cong H_{i-1}(A^{K,\epsilon})$
for all $i<0$.\end{lem}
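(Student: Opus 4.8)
The plan is to restrict attention to the part of $A_E^{C(K),\epsilon'}$ in nonpositive degree, where the generators coming from left cusps of $K$ cannot appear, and to identify what remains with a regrading of $A^{K,\epsilon}$.

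First I would pin down the generators of $A_E^{C(K),\epsilon'}$: these are the crossings $c_E$ coming from crossings $c$ of $K$, with $|c_E|=|c|+1$, together with one crossing $l$ of degree $1$ for each left cusp of $K$ (this $l$ is the $E$-analogue of the crossing $r$ ``adjacent to pairs of right cusps'' appearing in $A_W^{C(K),\epsilon'}$; its degree is the one recorded in the splitting of Proposition~\ref{pro:splitting-2-copy}, cf.\ \cite[\S2.5]{Ng:2003p540}). Hence the generators of $A_E^{C(K),\epsilon'}$ of degree $\le 0$ are exactly the $c_E$ with $|c|\le -1$, and these are in grading-preserving bijection --- via $c\mapsto c_E$ together with the shift $|c_E|=|c|+1$ --- with the generators of $A^{K,\epsilon}$ of degree $\le -1$; note that the right cusps of $K$, all of degree $1$, have no counterpart here, so nothing is lost in this range.

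Next I would identify the differential on this truncation by running the argument of the lemma above computing $H_*(A_W^{C(K),\epsilon'})$ essentially verbatim. Fix $c_E$ with $|c|\le -1$. A disk $D$ in $C(K)$ contributing to $\partial^{\epsilon'}c_E$ has at most one unaugmented corner, and since $\partial^{\epsilon'}c_E$ has degree $\le -1$ while each $l$ has degree $1$, no such corner can be one of the $l$'s; thus $D$ stays among the copies $K_1,K_2$ away from the left-cusp crossings. As in the $A_W$ case, $\partial D$ leaves $c_E$ along one of $K_1,K_2$ and returns along the other, so it switches components exactly once --- at the unique unaugmented corner $c'_E$, if there is one --- and projecting $D$ into a tubular neighborhood of $K$ yields a disk downstairs contributing $c'$ to $\partial^{\epsilon}c$ (or, if every corner of $D$ is augmented, contributing all of the corresponding $\sum c'_j$). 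Conversely each disk for $c$ in $K$ lifts to a unique admissible disk for $c_E$, and as in the $A_W$ argument one checks in particular that the two left cusps of $C(K)$ lying over a given left cusp of $K$ admit only one admissible lift. Therefore $\partial^{\epsilon'}c_E=\sum (c'_j)_E$ whenever $\partial^{\epsilon}c=\sum c'_j$, so the span of the degree-$\le 0$ generators of $A_E^{C(K),\epsilon'}$ is, as a chain complex, isomorphic to the part in degrees $\le 0$ of $A^{K,\epsilon}$ with every grading raised by $1$.

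Finally, for $i<0$ the group $H_i$ of any chain complex is computed from its pieces in degrees $i-1,i,i+1$, all of which are $\le 0$; so the isomorphism of truncations just established gives $H_i(A_E^{C(K),\epsilon'})\cong H_{i-1}(A^{K,\epsilon})$ for all $i<0$, as claimed. The step that genuinely requires work --- and the reason the statement is restricted to negative degree, in contrast to the clean shift that held for $A_W$, $A_N$, and $A_S$ --- is the differential identification above: one must adapt the disk-counting bijection of the $A_W$ lemma to the $E$ subcomplex and verify that the left-cusp crossings $l$, which unlike the $A_W$ situation do not correspond to any generators of $A^{K}$, contribute nothing in degrees $\le 0$.
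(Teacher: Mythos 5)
Your overall strategy is the same as the paper's (identify the graded pieces of $A_E^{C(K),\epsilon'}$ in the relevant range with those of $A^{K,\epsilon}$ shifted by one, match the differentials disk-by-disk as in the $A_W$ lemma), but there is a concrete error that leaves a gap exactly at the one degree where something must be checked. The crossings $l_i$ sitting between the two left cusps of $C(K)$ lying over a left cusp of $K$ have degree $0$, not degree $1$: the two strands through $l_i$ are the bottom strand of the $K_1$-cusp and the top strand of the $K_2$-cusp, and with the convention that each $K_1$-strand has Maslov potential one greater than the parallel $K_2$-strand, these two strands have equal potential, so $|l_i|=0$. (The same computation for the crossings $r$ at the right cusps gives $|r|=0$, which is why they match the degree-$1$ right cusps of $K$ under the shift in the $A_W$ lemma; your degree-$1$ guess for $l_i$ is not what the analogy produces.) Consequently your claim that the degree-$\le 0$ truncation of $A_E^{C(K),\epsilon'}$ is spanned only by the $c_E$ with $|c|\le -1$ is false: the $l_i$ all live in degree $0$, which is precisely one of the degrees entering the computation of $H_{-1}$.

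Because of this, your argument as written does not establish the case $i=-1$: the quotient defining $H_{-1}(A_E^{C(K),\epsilon'})$ involves $\mathrm{im}\bigl(\partial^{\epsilon'}:(A_E^{C(K),\epsilon'})_0\to(A_E^{C(K),\epsilon'})_{-1}\bigr)$, and you must rule out extra contributions from the $l_i$. This is the step the paper makes explicitly: $\partial l_i=0$ (any admissible disk with rightmost point at $l_i$ would have to run into the two distinct left cusps immediately to its left and so cannot have a unique leftmost point at a left cusp), hence the $l_i$ do not enlarge the image of the differential and $H_{-1}$ comes out as claimed. Your degree-based argument that no $l_i$ can appear as a corner of a disk contributing to $\partial^{\epsilon'}c_E$ for $|c_E|\le 0$ survives the correction (a degree-$0$ generator cannot appear in a linearized differential landing in degree $\le -1$), so the rest of your identification is fine once you add the verification that $\partial l_i=0$.
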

\begin{proof}
The complex $A_{E}^{C(K),\epsilon'}$ is generated by crossings $c_{E}$,
with $|c_{E}|=|c|+1$, as well as the crossings $l_{i}$ in between
each pair of left cusps, satisfying $|l_{i}|=0$ and $\partial l_{i}=0$.
For each $i<0$, we have an isomorphism of graded components $(A_{E}^{C(K),\epsilon'})_{i}\cong(A^{K,\epsilon})_{i-1}$
matching each $c_{E}$ to $c$, since the complexes differ only by
the generators $l_{i}$ in grading $0$ and the right cusps of $K$
in grading $1$. The differentials are identical under this identification
just as before, except we do not have to consider disks in $K$ with
all corners augmented since this can only happen for $v\in(A^{K,\epsilon})_{1}$.
Furthermore, the image of $\partial^{\epsilon'}:(A_{E}^{C(K),\epsilon'})_{0}\to(A_{E}^{C(K),\epsilon'})_{-1}$
is identical to that of $\partial^{\epsilon}:(A^{K,\epsilon})_{-1}\to(A^{K,\epsilon})_{-2}$
since the extra generators $l_{i}$ do not contribute to $\mathrm{im}(\partial^{\epsilon'})$.
Thus we have an isomorphism $H_{i}(A_{E}^{C(K),\epsilon'})\cong H_{i-1}(A^{K,\epsilon})$
for all $i<0$.
\end{proof}

\begin{proof}[Proof of Theorem \ref{thm:extra-double-aug}]
We have now computed $H_{i}(A^{W(K),\epsilon'})$ for all $i<0$:
namely, it is isomorphic to $H_{i}(A^{C(K),\epsilon'})$, and then
the splitting of $A^{C(K),\epsilon'}$ gives an isomorphism \[
H_{i}(A^{W(K),\epsilon'})\cong H_{i+1}(A^{K,\epsilon})\oplus(H_{i}(A^{K,\epsilon}))^{\oplus2}\oplus H_{i-1}(A^{K,\epsilon}).\]
If $P_{\epsilon}^{K}(t)$ and $P_{\epsilon'}^{W(K)}(t)$ are the Chekanov
polynomials of $\epsilon$ and $\epsilon'$, then, it follows that
$P_{\epsilon'}^{W(K)}(t)=(t+2+t^{-1})P_{\epsilon}^{K}(t)+f(t)$ for
some actual polynomial $f\in\mathbb{Z}[t]$, since the coefficient
of $t^{i}$ on either side is the rank of the corresponding $i$th
homology group for $i<0$. By Poincar\'{e} duality \cite{Sabloff:2006p526}
we can write $P_{\epsilon}^{K}(t)=t+\sum a_{i}t^{i}$ and $P_{\epsilon'}^{W(K)}(t)=t+\sum b_{i}t^{i}$,
where $a_{i}=a_{-i}$ and $b_{i}=b_{-i}$ for all $i$; then \[
t+\sum b_{i}t^{i}=(t^{2}+2t+1)+\sum(a_{i+1}+2a_{i}+a_{i-1})t^{i}+f(t)\]
or \[
t^{2}+t+1+f(t)=\sum(b_{i}-a_{i+1}-2a_{i}-a_{i-1})t^{i}.\]
The coefficients $c_{i}$ on the right hand side are symmetric, i.e.
they satisfy $c_{i}=c_{-i}$, so the left hand side must be symmetric
as well, and since it is a polynomial rather than a Laurent series
we must have $f(t)=n-t^{2}-t$ for some $n\in\mathbb{Z}$. Therefore
\[
P_{\epsilon'}^{W(K)}(t)=t+(n+1)+(t+2+t^{-1})\sum a_{i}t^{i}.\]
In order to determine $n$, we note that $P_{\epsilon'}^{W(K)}(-1)=tb(W(K))=1$,
and substituting $t=-1$ into the above equation leaves $n=1$. We
conclude that \[
P_{\epsilon'}^{W(K)}(t)=t+2+(t+2+t^{-1})\sum a_{i}t^{i},\]
as desired.
\end{proof}

\section{The characteristic algebra}

\subsection{The van Kampen theorem for the characteristic algebra}

Ng \cite{Ng:2003p540} introduced the characteristic algebra of a
Legendrian knot as an effective way to distinguish knots using the
Chekanov-Eliashberg algebra when the Chekanov polynomials could not.
\begin{defn}
Let $A$ be a DGA, and let $I\subset A$ be the two-sided ideal generated
by the image of $\partial$. The \emph{characteristic algebra} of
$A$ is the quotient $\mathcal{C}(A)=A/I$, with grading inherited
from $A$.\end{defn}
Two characteristic algebras $A_1/I_1$ and $A_2/I_2$ are \emph{stable
tame isomorphic} if we can add some free generators to one or both algebras
to make them tamely isomorphic.

\begin{thm}[{\cite[Theorem 3.4]{Ng:2003p540}}]
 The stable tame isomorphism class of the characteristic algebra
$\mathcal{C}(Ch(K))$ of a Legendrian knot is a Legendrian isotopy
invariant.
\end{thm}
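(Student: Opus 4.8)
The plan is to reduce everything to the invariance of $Ch(K)$ itself, which is available from the theorem of Chekanov and Ng quoted above: since $Ch(K)$ is well-defined up to stable tame isomorphism, it suffices to prove the purely algebraic statement that if two semi-free DGAs $A$ and $A'$ are stable tame isomorphic, then their characteristic algebras $\mathcal{C}(A)$ and $\mathcal{C}(A')$ are stable tame isomorphic in the sense just defined. As stable tame isomorphism of DGAs is generated by tame isomorphisms and by stabilizations and destabilizations, I would check these two moves separately and then chain them together.

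First, the tame isomorphism case. If $\phi\colon A\to A'$ is a tame isomorphism then it is in particular a grading-preserving algebra isomorphism commuting with the differentials, so it carries $\mathrm{im}(\partial_A)$ bijectively onto $\mathrm{im}(\partial_{A'})$, and hence carries the two-sided ideal $I\subset A$ generated by the boundaries onto the corresponding ideal $I'\subset A'$. Thus $\phi$ descends to a grading-preserving algebra isomorphism $\bar\phi\colon\mathcal{C}(A)\to\mathcal{C}(A')$. Writing $\phi$ as a composite of elementary automorphisms $g_i\mapsto g_i+\varphi(g_1,\dots,\widehat{g_i},\dots,g_n)$ followed by a relabeling, each factor descends to a map of the same shape on the images of the generators, so $\bar\phi$ is itself tame, and no generators need to be added in this case.

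Next, the stabilization case: say $A'$ is obtained from $A$ by freely adjoining generators $a,b$ with $|a|=k+1$, $|b|=k$, $\partial a=b$, $\partial b=0$. The key computation is that the ideal $I'\subset A'$ generated by $\mathrm{im}(\partial_{A'})$ equals the two-sided ideal generated by $I$ together with $b$: applying the Leibniz rule to $\partial_{A'}$ of any word in the generators of $A'$, every resulting monomial has a factor that is either the boundary of an old generator (lying in $I$), or $\partial a=b$, or $\partial b=0$; so $\mathrm{im}(\partial_{A'})$ lies in the ideal generated by $I\cup\{b\}$, and the reverse inclusion is immediate since $b=\partial a$ and $I\subseteq I'$. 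Passing to the quotient, $A'/I'$ is obtained from $A'=A\ast\mathbb{F}\langle a,b\rangle$ by killing $b$ and then killing $I$, which gives $\mathcal{C}(A')\cong\mathcal{C}(A)\ast\mathbb{F}\langle a\rangle$; that is, $\mathcal{C}(A')$ is exactly $\mathcal{C}(A)$ with one extra free generator in grading $k+1$, and reading this in reverse handles destabilization. Chaining the two cases, any sequence of tame isomorphisms and (de)stabilizations relating $Ch(K)$ to $Ch(K')$ produces a sequence of tame isomorphisms and additions or deletions of free generators relating $\mathcal{C}(Ch(K))$ to $\mathcal{C}(Ch(K'))$, which is what the statement asserts.

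I expect the one step requiring genuine care to be the ideal identification in the stabilization case — verifying that adjoining the acyclic pair $(a,b)$ to a DGA creates no unexpected relations in the characteristic quotient and has precisely the effect of adjoining a single free polynomial generator. The tame-isomorphism case should be essentially formal once one observes that a DGA isomorphism automatically respects the ideal of boundaries.
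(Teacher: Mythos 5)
Your argument is correct and is essentially the standard proof of this result, which the paper simply cites to Ng \cite{Ng:2003p540} rather than proving: one checks that a tame isomorphism carries the ideal of boundaries to the ideal of boundaries and hence descends to the quotients, and that a stabilization changes the pair $(A,I)$ to $(A\ast\mathbb{F}\langle a,b\rangle,\langle I,b\rangle)$, so that $\mathcal{C}(A')\cong\mathcal{C}(A)\ast\mathbb{F}\langle a\rangle$ is $\mathcal{C}(A)$ with one free generator adjoined. Your identification $I'=\langle I,b\rangle$ via the Leibniz rule is exactly the point requiring care, and you have handled it correctly.
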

There are some technicalities involved in defining equivalence
-- in particular, one must consider equivalence relations on the pair
$(A,I)$ rather than the quotient $A/I$ -- but we will ignore these
since we are only concerned with the stable isomorphism class of $\mathcal{C}(Ch(K))$.
For example, Ng showed that the isomorphism class (together with the
gradings of the generators of $Ch(K)$) is strong enough to recover
the first and second order Chekanov polynomials of $K$, and he conjectured
that the Chekanov polynomials of all orders are determined by this
information.

We can define $\mathcal{C}$ as a functor from the category of DGAs
to the category of graded associative unital algebras: we have already
defined it for objects of the category, and given a DGA morphism $f:X\to Y$
we note that the relation $\partial f=f\partial$ implies $f(\partial(X))=\partial(f(X))\subset\partial(Y)$,
hence $f$ descends to a morphism $\mathcal{C}(f):\mathcal{C}(X)\to\mathcal{C}(Y)$.
It turns out that this functor is well-behaved.
\begin{prop}
The functor $\mathcal{C}$ preserves pushouts.\end{prop}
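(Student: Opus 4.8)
The plan is to recognize $\mathcal{C}$ as a left adjoint, which makes the statement essentially formal, and then to unwind that adjunction into a hands-on check of the pushout universal property so the argument is self-contained.

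First I would introduce the functor $G$ from graded associative unital algebras to DGAs that sends an algebra $\mathcal{B}$ to $(\mathcal{B},0)$, the same algebra with zero differential, and is the identity on morphisms. A DGA morphism $\phi\colon\mathcal{A}\to(\mathcal{B},0)$ is exactly an algebra homomorphism with $\phi\circ\partial=0$; since $\phi$ is multiplicative, vanishing on $\mathrm{im}\,\partial$ forces it to vanish on the whole two-sided ideal $I$ generated by $\mathrm{im}\,\partial$, so $\phi$ factors uniquely through $\mathcal{C}(\mathcal{A})=\mathcal{A}/I$. This gives a natural bijection $\mathrm{Hom}(\mathcal{C}(\mathcal{A}),\mathcal{B})\cong\mathrm{Hom}_{\mathrm{DGA}}(\mathcal{A},G(\mathcal{B}))$, i.e. $\mathcal{C}\dashv G$. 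Left adjoints preserve colimits and pushouts are colimits, so $\mathcal{C}$ preserves pushouts.

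To spell this out, let a pushout square of DGAs consist of $\mathcal{A}\xrightarrow{f}\mathcal{B}$, $\mathcal{A}\xrightarrow{g}\mathcal{B}'$, and $\mathcal{P}$ with structure maps $h\colon\mathcal{B}\to\mathcal{P}$, $i\colon\mathcal{B}'\to\mathcal{P}$. Applying $\mathcal{C}$ yields a commutative square on $\mathcal{C}(\mathcal{A}),\mathcal{C}(\mathcal{B}),\mathcal{C}(\mathcal{B}'),\mathcal{C}(\mathcal{P})$. Given a graded algebra $X$ and homomorphisms $\alpha\colon\mathcal{C}(\mathcal{B})\to X$, $\beta\colon\mathcal{C}(\mathcal{B}')\to X$ with $\alpha\circ\mathcal{C}(f)=\beta\circ\mathcal{C}(g)$, precomposing with the quotient maps $\mathcal{B}\to\mathcal{C}(\mathcal{B})$ and $\mathcal{B}'\to\mathcal{C}(\mathcal{B}')$ produces algebra homomorphisms $\mathcal{B}\to X$ and $\mathcal{B}'\to X$ killing $\mathrm{im}\,\partial_{\mathcal{B}}$ and $\mathrm{im}\,\partial_{\mathcal{B}'}$ respectively, i.e. DGA morphisms into $(X,0)$ which agree after precomposition with $f$ and $g$ (by naturality of the quotient and the hypothesis on $\alpha,\beta$). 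The universal property of the DGA pushout gives a unique DGA morphism $\Phi\colon\mathcal{P}\to(X,0)$ compatible with $h$ and $i$; as a morphism into $(X,0)$ it annihilates $\mathrm{im}\,\partial_{\mathcal{P}}$, so it descends to $\bar\Phi\colon\mathcal{C}(\mathcal{P})\to X$. Naturality of the quotient maps shows $\bar\Phi\circ\mathcal{C}(h)=\alpha$ and $\bar\Phi\circ\mathcal{C}(i)=\beta$, and uniqueness follows because $\mathcal{P}\to\mathcal{C}(\mathcal{P})$ is surjective, so any competitor $\bar\Phi'$ would lift to a DGA morphism $\mathcal{P}\to(X,0)$ with the same compatibilities and hence equal $\Phi$.

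I do not expect a real obstacle; the content is formal once the adjunction is in place. The one point worth a remark is that $(X,0)$ must be an admissible test object for the DGA pushout: all DGAs in this paper are assumed semi-free, whereas $(X,0)$ need not be. One dispatches this either by noting that the proofs of Theorem \ref{thm:pushout-square} and Theorem \ref{thm:generalized-pairing} construct the comparison morphism generator by generator on the (semi-free) pushout and never use any hypothesis on the target, so they apply verbatim with target $(X,0)$, or by running the whole argument in the ambient category of all DGAs, where the adjunction $\mathcal{C}\dashv G$ literally lives. The only remaining work is the routine bookkeeping of keeping the quotient maps $\mathcal{A}\to\mathcal{C}(\mathcal{A})$, $\mathcal{B}\to\mathcal{C}(\mathcal{B})$, $\mathcal{B}'\to\mathcal{C}(\mathcal{B}')$, and $\mathcal{P}\to\mathcal{C}(\mathcal{P})$ straight.
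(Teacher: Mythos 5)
Your proof is correct and takes essentially the same route as the paper: exhibiting the ``zero differential'' functor as a right adjoint to $\mathcal{C}$ and invoking preservation of colimits by left adjoints. The additional unwinding of the universal property and the remark about semi-freeness of the test object $(X,0)$ are fine but not needed beyond what the paper already does.
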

\begin{proof}
It suffices to prove that the functor $\mathcal{D}:\mathrm{GA}\to\mathrm{DGA}$
(here $\mathrm{GA}$ denotes graded algebras) defined by $\mathcal{D}(X)=(X,\partial_{X}=0)$
is a right adjoint to $\mathcal{C}$; then, since $\mathcal{C}$ is
a left adjoint it preserves colimits, which include pushouts.

Given a DGA $(A,\partial)$ and a graded algebra $X$, we need to
establish a natural bijection \[
\varphi:\hom_{\mathrm{GA}}(\mathcal{C}(A),X)\to\hom_{\mathrm{DGA}}(A,\mathcal{D}(X)).\]
Letting $\pi:A\to\mathcal{C}(A)$ denote the projection of graded
algebras, we can define $\varphi(f)=f\circ\pi:A\to X$ for any $f\in\hom_{\mathrm{GA}}(\mathcal{C}(A),X)$.
This is in fact a chain map since $(f\circ\pi)\circ\partial_{A}=f\circ(\pi\circ\partial_{A})=0=\partial_{X}\circ(f\circ\pi)$,
so $\varphi(f)\in\hom_{\mathrm{DGA}}(A,\mathcal{D}(X))$, and it is
clear that $\varphi$ is injective. Conversely, given a chain map
$\tilde{g}\in\hom_{\mathrm{DGA}}(A,\mathcal{D}(X))$ we must have
$\tilde{g}(\partial_{A}a)=\partial_{X}(\tilde{g}(a))=0$, and since
$\tilde{g}$ vanishes on the image of $\partial_{A}$ it factors through
the graded algebra $\mathcal{C}(A)$, hence $\tilde{g}=\varphi(g)$
for some $g\in\hom_{\mathrm{GA}}(\mathcal{C}(A),X)$ and so $\varphi$
is surjective. Since $\varphi$ is also clearly natural, we conclude
that $\mathcal{C}$ and $\mathcal{D}$ are adjoints, as desired.
\end{proof}
The following version of van Kampen's theorem for characteristic algebras
is now an immediate consequence of Theorems \ref{thm:pushout-square}
and \ref{thm:generalized-pairing}.
\begin{thm}
\label{thm:van-kampen-characteristic}Let $K$ be a simple Legendrian
front split by a vertical dividing line into a left half $K^{A}$
and a right half $K^{D}$; or, let $K_{1}$, $K_{2}$, and $K_{3}$
be adjacent regions of a simple front with $K_{12}=K_{1}\cup K_{2}$,
$K_{23}=K_{2}\cup K_{3}$, and $K_{123}=K_{1}\cup K_{2}\cup K_{3}$.
Then the diagrams \begin{eqnarray*}
\xymatrix{\mathcal{C}(I_{n})\ar[r]\ar[d]_{\mathcal{C}(w)} & \mathcal{C}(D(K^{D}))\ar[d]^{\mathcal{C}(w')}\\
\mathcal{C}(A(K^{A}))\ar[r] & \mathcal{C}(Ch(K))}
 &  & \xymatrix{\mathcal{C}(DA(K_{2}))\ar[r]\ar[d]_{\mathcal{C}(w)} & \mathcal{C}(DA(K_{23}))\ar[d]^{\mathcal{C}(w')}\\
\mathcal{C}(DA(K_{12}))\ar[r] & \mathcal{C}(DA(K_{123}))}
\end{eqnarray*}
are pushout squares in the category of graded algebras.\end{thm}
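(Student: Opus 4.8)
The plan is to combine the two ingredients already established: Theorems \ref{thm:pushout-square} and \ref{thm:generalized-pairing} say that the two squares of DGAs in question are pushouts in the category of DGAs, and the preceding proposition says that the functor $\mathcal{C}$ preserves pushouts. Since a functor which preserves pushouts carries a pushout square to a pushout square, one simply applies $\mathcal{C}$ to each of the two DGA pushout squares and reads off that the resulting square---whose objects are the characteristic algebras $\mathcal{C}(I_n)$, $\mathcal{C}(A(K^A))$, and so on, and whose morphisms are $\mathcal{C}(w)$, $\mathcal{C}(w')$, and the characteristic-algebra images of the relevant inclusions---is a pushout in the category of graded algebras. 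In that sense the theorem is immediate once the proposition is in hand.

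The only work beyond quoting these two results is a little bookkeeping. First, one checks that $\mathcal{C}$ is genuinely functorial, so that the morphisms in the image squares are exactly the ones named: this is the observation made just before the proposition that any DGA morphism $f: X\to Y$ satisfies $f(\partial(X))\subseteq\partial(Y)$, hence descends to a well-defined map $\mathcal{C}(f):\mathcal{C}(X)\to\mathcal{C}(Y)$. Second, one notes that each of the objects $I_n$, $A(K^A)$, $D(K^D)$, $Ch(K)$, and the various $DA(K_\bullet)$ has a characteristic algebra which is a graded associative unital algebra with inherited grading, so the image squares really do live in the target category $\mathrm{GA}$; this is built into the definition of $\mathcal{C}$.

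I do not expect a genuine obstacle. Once the adjunction $\mathcal{C}\dashv\mathcal{D}$ supplied by the preceding proposition is available, the statement is a purely formal consequence of the two earlier pushout theorems, the whole content being that a left adjoint preserves colimits; the argument for the second diagram is word for word the same as for the first. If there is any subtlety at all, it is only in remembering that we have changed target categories---the universal property of an image square is tested against graded algebras $X$ (carrying the zero differential), not against arbitrary DGAs---but this is exactly what the natural bijection $\hom_{\mathrm{GA}}(\mathcal{C}(A),X)\cong\hom_{\mathrm{DGA}}(A,\mathcal{D}(X))$ takes care of, since it converts a cocone in $\mathrm{GA}$ under $\mathcal{C}$ of the DGA diagram into a cocone in $\mathrm{DGA}$ under the DGA diagram itself, and conversely.
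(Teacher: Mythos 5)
Your proposal is correct and matches the paper exactly: the theorem is stated there as an immediate consequence of Theorems \ref{thm:pushout-square} and \ref{thm:generalized-pairing} together with the proposition that $\mathcal{C}$, being a left adjoint to $\mathcal{D}$, preserves pushouts. Your extra bookkeeping about functoriality and the change of target category is exactly the content the paper leaves implicit.
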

\begin{rem}
Although the DGA morphism $I_{n}\to D(K^{D})$ is an inclusion, this
is generally not true of the induced map $\varphi:\mathcal{C}(I_{n})\to\mathcal{C}(D(K^{D}))$.
Suppose that $K^{D}$ has some crossings but no left cusps, and let
$v$ be a leftmost crossing of $K^{D}$. If strands $s_{1}$ and $s_{2}$
pass through $v$, then $s_{1}$ and $s_{2}$ cannot intersect any
other strands between the dividing line and $v$, so we must have
$\partial v=\rho_{i,i+1}$ for some $i$. But now $\varphi(\rho_{i,i+1})=0$,
and yet $\rho_{i,i+1}\in\mathcal{C}(I_{n})$ cannot be zero since
the two-sided ideal $\mathrm{Im}(\partial)\subset I_{n}$ is generated
by homogeneous quadratic terms.
\end{rem}
Let $\mathcal{C}'$ denote the composition of $\mathcal{C}$ with
the abelianization functor from graded algebras to graded commutative
algebras. Since abelianization also preserves pushouts, the abelianized
characteristic algebra $\mathcal{C}'(Ch(K))$ satisfies Theorem \ref{thm:van-kampen-characteristic}
as well; in this category, pushouts are tensor products, so for example
we can express this as \[
\mathcal{C}'(DA(K_{123}))\cong\mathcal{C}'(DA(K_{12}))\otimes_{\mathcal{C}'(DA(K_{2}))}\mathcal{C}'(DA(K_{23})).\]

\subsection{Tangle replacement and the characteristic algebra}

The following is Conjecture 3.14 of \cite{Ng:2003p540}.
\begin{conjecture}
\label{con:characteristic-maximal-tb}Let $\mathcal{K}$ be any Legendrian
representative of the knot $K$ with maximal Thurston-Bennequin number.
Then the equivalence class of the ungraded abelianized characteristic
algebra $\mathcal{C}'(Ch(\mathcal{K}))$ is a topological invariant
of $K$.
\end{conjecture}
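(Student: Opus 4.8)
\emph{A strategy toward the conjecture.}
The plan is to combine the tangle-replacement machinery of Proposition \ref{pro:tangle-d-diagram} with the fact, recorded in Theorem \ref{thm:van-kampen-characteristic}, that $\mathcal{C}'$ carries pushouts of DGAs to pushouts of graded commutative algebras. Legendrian Reidemeister moves change $Ch(\mathcal{K})$ only up to stable tame isomorphism, hence leave $\mathcal{C}'(Ch(\mathcal{K}))$ unchanged, so the content lies entirely in moves that alter the Legendrian isotopy class of a representative while preserving both its underlying knot type and its (maximal) Thurston--Bennequin number. By the Fuchs--Tabachnikov theorem, any two Legendrian representatives of a fixed topological knot are connected by a finite sequence of Legendrian isotopies together with stabilizations and destabilizations; since a stabilization strictly decreases $tb$, a sequence joining two $tb$-maximal representatives must descend and then re-ascend the Legendrian mountain range. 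The first step is therefore to realize a single Legendrian stabilization (insertion of a zigzag) as a local tangle replacement and to show --- in the style of the proofs of Theorem \ref{thm:parallel-break} and Proposition \ref{pro:unhook-clasp}, by lifting the relevant tangle to the right-hand end of the front and comparing the two type D algebras --- that the type D algebra of the stabilized tangle is stable tame isomorphic to that of the original with finitely many free generators adjoined, whose differentials have the shape $1+(\cdots)$ or a single $\rho_{ij}$.

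Granting this, the second step passes to characteristic algebras and abelianizes. An adjoined generator $c$ with $\partial c$ equal to $\rho_{ij}$ or $1+\rho_{ij}$ either becomes genuinely free after abelianization --- and is then absorbed by the equivalence on characteristic algebras, which permits adding free generators --- or forces a relation $\rho_{ij}=0$ (respectively $\rho_{ij}=1$) in $\mathcal{C}'$. One must then show that each such relation is \emph{already} forced on the abelianized characteristic algebra of any $tb$-maximal diagram, so that imposing it does nothing. Here I expect to need external input: the natural candidate is the characterization of $tb$-maximality through the Kauffman bound together with Rutherford's identification of the ungraded ruling polynomial of a $tb$-maximal front with a coefficient of the Kauffman polynomial \cite{Rutherford}, which already makes the count of ungraded augmentations over each $\mathbb{F}_q$ a topological invariant and should, with work, be promotable to rigidity of the full abelianized relation ideal. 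With the local moves in hand, Theorem \ref{thm:van-kampen-characteristic} does the gluing: replacing a tangle $T_1$ by $T_2$ in a fixed ambient diagram induces a map $\mathcal{C}'(Ch(K_1))\to\mathcal{C}'(Ch(K_2))$ which is an isomorphism up to adjoining free generators exactly when the corresponding local map of type D algebras has this property, so the global statement follows from the local one.

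The main obstacle, I expect, is controlling the \emph{round trip}: a single stabilization genuinely can alter $\mathcal{C}'$, since it drops $tb$ and hence changes the ruling polynomial, and only the composite descent-then-ascent between two peaks is supposed to return a topological invariant. Proving the conjecture requires showing that the extra relations accumulated in descending are exactly undone upon re-ascending to another peak, independently of which peak is reached and of the order of the moves --- a statement that does not follow formally from the pushout formalism and appears to need a genuinely global identification of $\mathcal{C}'(Ch(\mathcal{K}))$, or at least of its isomorphism invariants, with something manifestly depending only on the underlying knot. A secondary, more technical obstacle is that equivalence of characteristic algebras is properly an equivalence of pairs $(A,I)$ rather than of quotients $A/I$, as the text notes, so the gluing via Theorem \ref{thm:van-kampen-characteristic} and the absorption of free generators must be carried out carefully at the level of pairs; one would want to re-run the pushout argument of Theorem \ref{thm:van-kampen-characteristic} keeping track of the ideal throughout.
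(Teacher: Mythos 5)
This statement is a \emph{conjecture} (Ng's Conjecture 3.14), and the paper does not prove it: it only records a conditional strategy (Proposition \ref{pro:tangle-d-diagram-2}), verifies the key local computation for the S/Z tangle exchange (Theorem \ref{thm:S-equals-Z}), and deduces the conjecture for twist knots from the Etnyre--Ng--V\'ertesi classification. Your proposal is likewise not a proof but a research program, and you are candid about where the gaps are; however, one of those gaps is not merely an obstacle but a fatal one, so the route you sketch cannot be repaired in its present form.

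The problem is the passage through the valley of the Legendrian mountain range. A single stabilization does not just ``alter'' $\mathcal{C}'$ or ``drop some relations'': it annihilates the characteristic algebra outright. A stabilized front has $1$ in the two-sided ideal generated by $\mathrm{im}(\partial)$ (the new zigzag contributes a generator whose differential is $1$ plus higher-order terms, and the DGA admits no augmentation), so $\mathcal{C}'(Ch(\mathcal{K}'))=0$ for every non-$tb$-maximal representative obtained in your descent --- the paper itself uses exactly this fact for twist knots (``any Legendrian representative of $K_m$ with non-maximal $tb$ can be destabilized, so its characteristic algebra vanishes''). Consequently the Fuchs--Tabachnikov sequence of stabilizations and destabilizations gives you, at the level of $\mathcal{C}'$, a chain of maps that factor through the zero algebra; no information whatsoever survives the descent, and there is nothing on which the re-ascent could act to ``undo'' the accumulated relations. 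The pushout formalism of Theorem \ref{thm:van-kampen-characteristic} transports maps, not inverses, so it cannot manufacture the missing isomorphism. This is precisely why the paper instead pins its hopes on the (open) existence of a set of \emph{$tb$-preserving} moves connecting $tb$-maximal representatives, cited to \cite{Etnyre:2003p790}: given such moves realized as tangle replacements, Proposition \ref{pro:tangle-d-diagram-2} would reduce the conjecture to finitely many local computations like Theorem \ref{thm:S-equals-Z}. Your secondary suggestion --- extracting rigidity of the relation ideal from Rutherford's ruling-polynomial results --- is at least pointed in a plausible direction, but as stated it is an appeal to a theorem that does not yet exist, not a step in a proof.
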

It is currently unknown whether there is a set of moves relating any
pair of topologically equivalent Legendrian links $L_{1}$ and $L_{2}$
with the same $tb$ \cite{Etnyre:2003p790}. A positive answer could
provide a straightforward way to resolve Conjecture \ref{con:characteristic-maximal-tb}:
\begin{prop}
\label{pro:tangle-d-diagram-2}Let $T_{1}$ and $T_{2}$ be Legendrian
tangles with $m$ strands on the left and $n$ strands on the right,
and let $\tilde{T}_{1}$ and $\tilde{T}_{2}$ be constructed as in
Figure \ref{fig:tangle-d-diagram}, where to a tangle $T$ we associate
the following half-diagram:

\begin{center}
\includegraphics{tangle-d-diagram}
\par\end{center}

\noindent If there is a stable isomorphism $\varphi:\mathcal{C}'(D(\tilde{T}_{1}))\to\mathcal{C}'(D(\tilde{T}_{2}))$
such that $\varphi(\rho_{ij})=\rho_{ij}$ for all $i$ and $j$, then
replacing $T_{1}$ with $T_{2}$ (or vice versa) in a front $K$ preserves
the stable isomorphism type of $\mathcal{C}'(Ch(K))$.\end{prop}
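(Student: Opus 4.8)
The plan is to run the argument of Proposition~\ref{pro:tangle-d-diagram} one functor higher: instead of tracking DGAs and their pushouts, I apply $\mathcal{C}'$ to everything and track graded commutative algebras, whose pushouts are relative tensor products. The only genuinely new point compared to Proposition~\ref{pro:tangle-d-diagram} is that $\varphi$ is merely a \emph{stable} isomorphism, so at the end one must check that adjoining free generators to $\mathcal{C}'(D(\tilde T_i))$ results in adjoining free generators to $\mathcal{C}'(Ch(K_i))$.

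First I would apply the lifting trick exactly as in Proposition~\ref{pro:tangle-d-diagram}: replace each front $K_i$ by an equivalent front in which $\tilde T_i$ sits at the far right, cut off by a vertical line from a left half-diagram $K^A$ which, together with the map $w\colon I_n\to A(K^A)$, is independent of $i$. Theorem~\ref{thm:pushout-square} then identifies $Ch(K_i)$ with the pushout $A(K^A)\coprod_{I_n}D(\tilde T_i)$, the maps being $w$ and the inclusion $I_n\hookrightarrow D(\tilde T_i)$. Applying $\mathcal{C}'$, which preserves pushouts (being a composite of left adjoints, as established earlier), gives
\[
\mathcal{C}'(Ch(K_i)) \;\cong\; \mathcal{C}'(A(K^A)) \otimes_{\mathcal{C}'(I_n)} \mathcal{C}'(D(\tilde T_i)),
\]
with structure maps $\mathcal{C}'(w)$ and the image of the inclusion.

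Next I would bring in $\varphi$. Write $B_i$ for the graded commutative algebra obtained from $\mathcal{C}'(D(\tilde T_i))$ by adjoining the free generators witnessing the stable isomorphism, so that there is an honest isomorphism $\hat\varphi\colon B_1\to B_2$; since $\varphi$ fixes each $\rho_{ij}$, so does $\hat\varphi$, hence $\hat\varphi$ is an isomorphism of $\mathcal{C}'(I_n)$-algebras, the module structures being via $\mathcal{C}'(I_n)\to\mathcal{C}'(D(\tilde T_i))\to B_i$. Base change along $\mathcal{C}'(w)$ turns $\hat\varphi$ into an isomorphism $\mathcal{C}'(A(K^A))\otimes_{\mathcal{C}'(I_n)}B_1\cong\mathcal{C}'(A(K^A))\otimes_{\mathcal{C}'(I_n)}B_2$. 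On the other hand, because the adjoined generators are new variables not in the image of $\mathcal{C}'(I_n)$, the relative tensor product commutes with adjoining them, so $\mathcal{C}'(A(K^A))\otimes_{\mathcal{C}'(I_n)}B_i$ is just $\mathcal{C}'(Ch(K_i))$ with those same free generators adjoined. Chaining these identifications shows that $\mathcal{C}'(Ch(K_1))$ and $\mathcal{C}'(Ch(K_2))$ become isomorphic after adjoining free generators to each, i.e.\ are stably isomorphic; finally, the modified fronts $K_1,K_2$ are stable tame isomorphic to the fronts before and after the tangle replacement, so the same holds for $\mathcal{C}'(Ch(K))$.

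I expect the main obstacle to be bookkeeping in this last step: one must make sure the notion of stable isomorphism of abelianized characteristic algebras is robust enough that the hypothesis $\varphi(\rho_{ij})=\rho_{ij}$ really passes to $\hat\varphi$ as a map of $\mathcal{C}'(I_n)$-algebras, and that base change along $\mathcal{C}'(w)$ genuinely commutes with adjoining free generators — together with the technicalities, already flagged and ignored in the text, of working with pairs $(A,I)$ rather than quotients $A/I$. Once those are granted the argument is formal.
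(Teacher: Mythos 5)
Your proposal is correct and follows essentially the same route as the paper: lift the tangle to the right as in Proposition \ref{pro:tangle-d-diagram}, apply $\mathcal{C}'$ (which preserves pushouts) to express $\mathcal{C}'(Ch(K_i))$ as a relative tensor product over $\mathcal{C}'(I_n)$, and observe that the stable isomorphism $\varphi$ induces a stable isomorphism $\tilde\varphi$ of the pushouts. The paper simply asserts this last step, while you spell out the base-change argument showing that adjoining free generators commutes with the relative tensor product; that is a harmless elaboration, not a different proof.
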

\begin{proof}
Repeat the proof of Proposition \ref{pro:tangle-d-diagram}, noting
that now we are working with pushouts of commutative algebras rather
than DGAs. The resulting commutative diagram \[
\xymatrix{\mathcal{C}'(D(\tilde{T}_{1}))\ar[r]^{\varphi}\ar[d] & \mathcal{C}'(D(\tilde{T}_{2}))\ar[d]\\
\mathcal{C}'(Ch(K_{1}))\ar[r]^{\tilde{\varphi}} & \mathcal{C}'(Ch(K_{2}))}
\]
is a pushout square, and since $\varphi$ is a stable isomorphism,
$\tilde{\varphi}$ must be as well.\end{proof}
\begin{rem}
Even when the map $\varphi$ isn't an isomorphism, we can still get
an interesting map relating $\mathcal{C}'(Ch(K_{1}))$ and $\mathcal{C}'(Ch(K_{2}))$.
For example, recall that the algebra $D(\tilde{C})$ of Theorem \ref{thm:parallel-break}
was obtained by adding an extra free generator $c$ to the algebra
$D(\tilde{P})$ with $\partial c=1+\rho_{12}$. Thus if $K'$ is obtained
from $K$ by the tangle replacement 

\begin{center}
\includegraphics{parallel-tangles}
\par\end{center}

\noindent it is easy to see that $\mathcal{C}'(Ch(K'))\cong(\mathcal{C}'(Ch(K))\otimes_{\mathbb{F}}\mathbb{F}[c])/\langle1+w(\rho_{12})\rangle$,
hence $\mathcal{C}'(Ch(K'))$ is stably isomorphic to a quotient of
$\mathcal{C}'(Ch(K))$.
\end{rem}
From now on we will abuse notation and write $\mathcal{C}'(K)=\mathcal{C}'(Ch(K))$,
$\mathcal{C}'(\tilde{T})=\mathcal{C}'(D(\tilde{T}))$, and so on whenever
it is clear from the description of the (partial) front whether we
are using the whole Chekanov-Eliashberg algebra or a type A, DA, or
D algebra.

\subsection{S and Z tangles}

\begin{figure}
\begin{centering}
\includegraphics{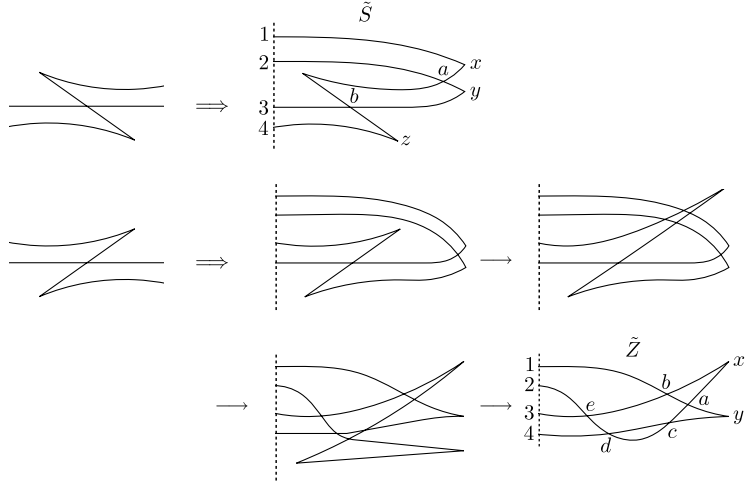}
\par\end{centering}

\caption{A pair of tangles which result in the same topological knot and value
of $tb$ but which may not preserve the Legendrian isotopy type. On
the right we show the half diagrams $\tilde{S}$ and $\tilde{Z}$
of Proposition \ref{pro:tangle-d-diagram-2}; note that for one we
perform several Legendrian Reidemeister moves to make it simple and
then eliminate some vertices.\label{fig:sz-move}}

\end{figure}

Figure \ref{fig:sz-move} gives an example of two tangles, an `S'
tangle and a `Z' tangle, which can be exchanged while preserving $tb$
and the topological knot type \cite{Etnyre:2003p790}.
\begin{thm}
\label{thm:S-equals-Z}Replacing an S tangle in a front diagram $K$
with a Z tangle, and vice versa, preserves the abelianized characteristic
algebra $\mathcal{C}'(K)$.\end{thm}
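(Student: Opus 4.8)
The plan is to invoke Proposition \ref{pro:tangle-d-diagram-2}: it suffices to produce a stable isomorphism $\varphi\colon\mathcal{C}'(D(\tilde S))\to\mathcal{C}'(D(\tilde Z))$ of commutative algebras with $\varphi(\rho_{ij})=\rho_{ij}$ for all $i,j$, where $\tilde S$ and $\tilde Z$ are the half-diagrams on the right side of Figure \ref{fig:sz-move}. Given such a $\varphi$, the proposition yields a pushout square with vertical maps $\mathcal{C}'(D(\tilde S))\to\mathcal{C}'(Ch(K_1))$ and $\mathcal{C}'(D(\tilde Z))\to\mathcal{C}'(Ch(K_2))$ and bottom map $\tilde\varphi$, and since $\varphi$ is a stable isomorphism so is $\tilde\varphi$, which is exactly the assertion of the theorem.

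First I would compute the type D algebras $D(\tilde S)$ and $D(\tilde Z)$ explicitly: each is freely generated over $\mathbb{F}$ by the vertices of the half-diagram together with the generators $\rho_{ij}$ of the interval algebra of its dividing line, and the differentials are read off by enumerating the admissible full disks in $\mathrm{Disk}(\tilde T;c)$ and the admissible right half-disks in $\mathrm{Half}_D(\tilde T;c)$ exactly as in the worked examples of Section 2. Here one must be careful that, as the caption of Figure \ref{fig:sz-move} warns, $\tilde S$ has been put in simple form by several Legendrian Reidemeister moves with some vertices subsequently eliminated, so the two half-diagrams will in general carry different numbers of vertex generators; this is harmless since Proposition \ref{pro:tangle-d-diagram-2} only demands a \emph{stable} isomorphism.

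Next I would pass to the abelianized characteristic algebras. By definition $\mathcal{C}'(D(\tilde T))$ is the quotient of the polynomial ring $\mathbb{F}[v_1,\dots,v_k,\rho_{ij}]$ by the ideal generated by all $\partial v_m$ together with the relations $\partial\rho_{ij}=\sum_{i<l<j}\rho_{il}\rho_{lj}$. I expect that in each of $\mathcal{C}'(\tilde S)$ and $\mathcal{C}'(\tilde Z)$ most vertex generators can be eliminated using their defining relations — a cusp generator $v$ contributes a relation of the form $1+(\text{word in }\rho\text{'s and lower generators})$, which solves for one of the quantities appearing — leaving in each case a presentation purely in terms of the $\rho_{ij}$ modulo the relations from $\partial\rho_{ij}$ plus one or two extra relations recording the tangle. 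The goal is then to match these two presentations by an isomorphism fixing the $\rho_{ij}$, after adding free polynomial generators to one or both sides so that the underlying free algebras have equal rank.

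The main obstacle is this middle comparison: correctly enumerating the disks and half-disks in $\tilde S$ after the simplifying Reidemeister moves, and then verifying that the two commutative ideals coincide under a change of variables that is the identity on every $\rho_{ij}$. If a direct comparison of the reduced presentations is not already an isomorphism, the fallback — which I expect is what actually occurs, by analogy with the $D'/D(\tilde C)$ and $D''/D(\tilde C)$ computations of Section 4 — is to realize $\varphi$ as an explicit finite sequence of tame automorphisms (each sending a generator $g_i$ to $g_i$ plus a polynomial in the others) followed by stabilizations adding free generators, arranged so that every step fixes each $\rho_{ij}$. Once such a $\varphi$ is exhibited, the theorem follows immediately from Proposition \ref{pro:tangle-d-diagram-2}.
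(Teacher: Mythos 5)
Your proposal is correct and follows essentially the same route as the paper: reduce to Proposition \ref{pro:tangle-d-diagram-2}, write out $\mathcal{C}'(\tilde S)$ and $\mathcal{C}'(\tilde Z)$ from the type D differentials, eliminate the vertex generators using the relations $\partial v=0$ together with $\partial\rho_{ij}=0$, and observe that both algebras become $\mathbb{F}[\text{free variables}]\otimes_{\mathbb{F}}\bigl(\mathcal{C}'(I_{4})/\langle\rho_{12}\rho_{34}=1\rangle\bigr)$, hence stably isomorphic by a map fixing every $\rho_{ij}$. The only content you leave unexecuted is the explicit elimination (e.g.\ deriving $b=\rho_{12}$, $a=\rho_{34}$, $\rho_{12}\rho_{34}=1$, $\rho_{23}=0$), which works out exactly as you anticipate, with no tame automorphisms needed beyond adding free polynomial generators.
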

\begin{proof}
By Proposition \ref{pro:tangle-d-diagram-2} we only need to check
that $\mathcal{C}'(\tilde{S})$ and $\mathcal{C}'(\tilde{Z})$ are
stably isomorphic for the half diagrams $\tilde{S}$ and $\tilde{Z}$
of Figure \ref{fig:sz-move}.

The algebra $\mathcal{C}(\tilde{S})$ is generated by $x,y,z,a,b$
and $\rho_{ij}$, modulo the elements $\partial\rho_{ij}$ and \begin{eqnarray*}
\partial x & = & 1+\rho_{12}a\\
\partial y & = & 1+\rho_{23}+ab\\
\partial z & = & 1+b\rho_{34}\end{eqnarray*}
since $\partial a=\partial b=0$. Using $\partial\rho_{13}=\rho_{12}\rho_{23}=0$,
we see that \[
0=\rho_{12}+\rho_{12}\rho_{23}+\rho_{12}ab=\rho_{12}+b\]
and so $b=\rho_{12}$. Then from $\partial x=0$ we get $ba=1$, and
$\partial z=0$ implies $a=ab\rho_{34}=(1+\rho_{23})\rho_{34}=\rho_{34}$.
In particular, $\rho_{12}\rho_{34}=1$ and the generators $a$ and
$b$ are redundant; and then $\rho_{23}=1+ab=ba+ab$, which is zero
in the abelianization $\mathcal{C}'(\tilde{S})$. It follows that
\[
\mathcal{C}'(\tilde{S})=\mathbb{F}[x,y,z]\otimes_{\mathbb{F}}\left(\mathcal{C}'(I_{4})/\langle\rho_{12}\rho_{34}=1\rangle\right).\]

The algebra $\mathcal{C}'(\tilde{Z})$ is generated by $x,y,a,b,c,d,e$
and $\rho_{ij}$, modulo the elements $\partial\rho_{ij}$ and \begin{eqnarray*}
\partial x & = & 1+\rho_{34}c\\
\partial y & = & 1+\rho_{12}d+\rho_{14}+b\rho_{34}\\
\partial a & = & \rho_{12}(1+dc)+\rho_{14}c+b\rho_{34}c\\
\partial b & = & \rho_{12}e+\rho_{13}\\
\partial d & = & \rho_{24}+e\rho_{34}\\
\partial e & = & \rho_{23}\end{eqnarray*}
with $\partial c=0$. Applying $\rho_{34}c=1$ to the relation $\partial a=0$
yields \[
b=\rho_{12}(1+dc)+\rho_{14}c,\]
so the generator $b$ is redundant. Since $\partial a=\rho_{12}+c+(\partial y)c$,
we also have $c=\rho_{12}$, so once again $\rho_{12}\rho_{34}=1$
and then $\rho_{23}=0$ as before. Now $\partial b=0$ implies $e=\rho_{12}\rho_{34}e=\rho_{13}\rho_{34}$
and likewise $\partial d=0$ implies $e=\rho_{12}\rho_{24}$ (which
is the same element since $\partial\rho_{14}=0$), so $e$ is redundant
as well. We can conclude that \[
\mathcal{C}'(\tilde{Z})=\mathbb{F}[a,d,x,y]\otimes_{\mathbb{F}}\left(\mathcal{C}'(I_{4})/\langle\rho_{12}\rho_{34}=1\rangle\right),\]
and this is stably isomorphic to $\mathcal{C}'(\tilde{S})$, as desired.
\end{proof}
Legendrian twist knots have been classified by work of Etnyre, Ng,
and V{\'e}rtesi \cite{Etnyre:2010p894} which allows us to verify Conjecture
\ref{con:characteristic-maximal-tb} in this case. They prove that
any Legendrian representative of $K_{m}$ with non-maximal $tb$ can
be destabilized, so its characteristic algebra vanishes; up to orientation,
there is a unique representative maximizing $tb$ if $m\geq-1$ (where
$m=-1$ is the unknot); and for $m\leq-2$, any representative which
maximizes $tb$ can be isotoped to a front as in Figure \ref{fig:twist-knot},
where the rectangle is filled with $|m+2|$ negative half-twists,
each an S tangle or a Z tangle. Since we can replace any Z tangle
with an S tangle without changing $\mathcal{C}'(K)$, we can conclude:

\begin{figure}
\begin{centering}
\includegraphics{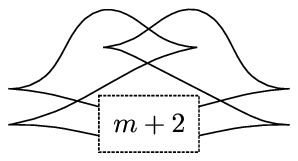}
\par\end{centering}

\caption{A front for the Legendrian twist knots $K_{m}$, $m\leq-2$.\label{fig:twist-knot}}

\end{figure}

\begin{cor}
Let $\mathcal{K}$ be a Legendrian representative of the twist knot
$K_{m}$. Then $\mathcal{C}'(\mathcal{K})$ depends only on $tb(\mathcal{K})$
and $m$.
\end{cor}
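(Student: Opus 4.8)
The plan is to deduce the corollary from the classification of Legendrian twist knots of Etnyre, Ng, and V{\'e}rtesi \cite{Etnyre:2010p894} together with Theorem \ref{thm:S-equals-Z}. I would argue by cases on whether $tb(\mathcal{K})$ is maximal.

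First, for a representative $\mathcal{K}$ with non-maximal $tb$: by the classification every such representative destabilizes, and a destabilizable Legendrian knot has $1$ lying in the two-sided ideal $\mathrm{Im}(\partial)$ of its Chekanov--Eliashberg algebra, so $\mathcal{C}'(\mathcal{K})$ is the zero algebra (as recorded in the paragraph preceding this corollary; see also \cite{Ng:2003p540}). Since $0$ is certainly a function of the pair $(tb(\mathcal{K}),m)$ -- and indeed all non-maximal values of $tb$ give the same answer -- this case contributes nothing further. Next, for $tb(\mathcal{K})$ maximal with $m \geq -1$: the classification gives a Legendrian representative unique up to orientation, and since reversing the orientation of a Legendrian knot changes neither its crossings and cusps, nor its admissible disks, nor the Maslov-potential differences governing the gradings, the algebra $Ch(\mathcal{K})$ -- and hence $\mathcal{C}'(\mathcal{K})$ -- depends only on $m$. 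Finally, for $tb(\mathcal{K})$ maximal with $m \leq -2$: the classification isotopes $\mathcal{K}$ to the normal-form front of Figure \ref{fig:twist-knot}, whose twist box is a vertical stack of $|m+2|$ negative half-twists, each an S tangle or a Z tangle in the sense of Figure \ref{fig:sz-move}. Invoking Theorem \ref{thm:S-equals-Z} once per Z tangle, I would replace them all with S tangles, which leaves $\mathcal{C}'(K)$ unchanged; the resulting ``all-S'' front depends only on $m$, so $\mathcal{C}'(\mathcal{K})$ does too. Assembling the three cases proves the corollary.

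I expect essentially no obstacle here, since the two substantive inputs -- Theorem \ref{thm:S-equals-Z}, whose proof (the S/Z computation) is already complete, and the classification of \cite{Etnyre:2010p894} with its normal form -- are in hand. The one point that warrants a careful look is verifying that the half-twists appearing in the normal-form front of Figure \ref{fig:twist-knot} really occur as the tangles $S$ and $Z$ to which Theorem \ref{thm:S-equals-Z} applies directly, rather than only after further Legendrian Reidemeister moves, together with the routine fact that a destabilizable representative has vanishing abelianized characteristic algebra.
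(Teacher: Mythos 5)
Your proposal is correct and follows the same route as the paper: non-maximal $tb$ representatives destabilize and hence have vanishing characteristic algebra, the maximal representative is unique up to orientation for $m\geq -1$, and for $m\leq -2$ the Etnyre--Ng--V\'ertesi normal form plus Theorem \ref{thm:S-equals-Z} reduces every maximal-$tb$ front to the all-S model. The extra details you supply (orientation reversal not affecting $Ch$, and $1\in\mathrm{Im}(\partial)$ for stabilized fronts) are exactly the facts the paper implicitly relies on.
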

\begin{figure}
\begin{centering}
\includegraphics{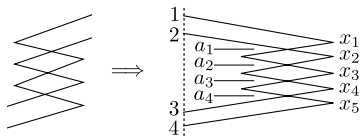}
\par\end{centering}

\caption{The $3S$ tangle and the associated partial front $\widetilde{3S}$.\label{fig:s-cubed}}

\end{figure}
In fact, many of these have the same abelianized characteristic algebra.
Consider the tangle $3S$ in Figure \ref{fig:s-cubed} obtained by
concatenating three S tangles. The crossings $a_{i}$ of $\widetilde{3S}$
have zero differential, whereas $\partial x_{i}$ takes the values
$1+\rho_{12}a_{1}$, $1+a_{1}a_{2}$, $1+a_{2}a_{3}$, $1+a_{3}a_{4}$,
and $1+a_{4}\rho_{34}$ for $1\leq i\leq5$, so $\mathcal{C}'(\widetilde{3S})$
is generated by adjoining the elements $a_{i}$ and $x_{i}$ to $\mathcal{C}'(I_{4})$
together with the relations $\partial x_{i}=0$. But these imply \[
\rho_{12}=a_{2}=a_{4}\mbox{\ and\ }a_{1}=a_{3}=\rho_{34}\]
together with $\rho_{12}\rho_{34}=a_{1}a_{2}=1$, and so \[
\mathcal{C}'(\widetilde{3S})=\mathbb{F}[x_{1},\dots,x_{5}]\otimes_{\mathbb{F}}(\mathcal{C}'(I_{4})/\langle\rho_{12}\rho_{34}=1\rangle).\]
This is stably isomorphic to the algebra $\mathcal{C}'(\tilde{S})$
computed in the proof of Theorem \ref{thm:S-equals-Z}, so we may
replace three consecutive S tangles with a single one without changing
$\mathcal{C}'(K)$. This move obviously changes the topological type
of $K$, since it removes a full negative twist, but for example we
can conclude that \[
\mathcal{C}'(\mathcal{K}_{-3})\cong\mathcal{C}'(\mathcal{K}_{-5})\cong\mathcal{C}'(\mathcal{K}_{-7})\cong\dots\]
and \[
\mathcal{C}'(\mathcal{K}_{-4})\cong\mathcal{C}'(\mathcal{K}_{-6})\cong\mathcal{C}'(\mathcal{K}_{-8})\cong\dots\]
where $\mathcal{K}_{-n}$ denotes any Legendrian representative of
$K_{-n}$ with maximal $tb$; these are stably isomorphic to $\mathbb{F}[x,y]/\langle(xy+1)^{2}=1\rangle$
and $\mathbb{F}[x,y,z]/\langle(xy+1)z=1\rangle$, respectively.

\begin{acknowledgement*}
I would like to thank my advisor, Tom Mrowka, for his many helpful
questions, suggestions, and conversations about this work. I would
also like to thank Lenny Ng and Josh Sabloff for sharing their knowledge
of the Chekanov-Eliashberg algebra and Legendrian knots in general,
Ana Caraiani for useful discussions about algebraic questions, and the 
referee for many valuable comments.
This work was supported by an NSF Graduate Research Fellowship.
\end{acknowledgement*}

\bibliographystyle{amsplain}
\def\polhk#1{\setbox0=\hbox{#1}{\ooalign{\hidewidth
  \lower1.5ex\hbox{`}\hidewidth\crcr\unhbox0}}}
\providecommand{\bysame}{\leavevmode\hbox to3em{\hrulefill}\thinspace}
\providecommand{\MR}{\relax\ifhmode\unskip\space\fi MR }
\providecommand{\MRhref}[2]{%
  \href{http://www.ams.org/mathscinet-getitem?mr=#1}{#2}
}
\providecommand{\href}[2]{#2}

\end{document}